\newcommand\Z{\mathbb{Z}}
\newcommand\N{\mathbb{N}}
\newcommand\R{\mathbb{R}}
\newcommand\Grig{{G_{012}}}
\newcommand\Erschler{{G_{01}}}
\def\pair<#1>{{\langle\!\langle}#1{\rangle\!\rangle}}
\newtheorem{lemma}{Lemma}[section]
\newtheorem{proposition}[lemma]{Proposition}
\newtheorem{theorem}[lemma]{Theorem}
\newtheorem{corollary}[lemma]{Corollary}
\newtheorem{example}[lemma]{Example}
\newtheorem{maintheorem}{Theorem}
\newtheorem*{mainquestion}{Question}
\theoremstyle{definition}
\newtheorem{remark}[lemma]{Remark}
\begin{document}
\title{Growth of permutational extensions}
\author{Laurent Bartholdi}
\author{Anna Erschler}
\date{14 December 2010; revised 31 July, 9 August and 19 October 2011}
\thanks{The work is supported by the ERC starting grant GA 257110
  ``RaWG'' and the Courant Research Centre ``Higher Order Structures''
  of the University of G\"ottingen}

% for RIG, VSG, VVN, NM, BV, PdlH, YdC
\begin{abstract}
  We study the geometry of a class of group extensions, containing
  permutational wreath products, which we call ``permutational
  extensions''. We construct for all $k\in\N$ a torsion
  group $K_k$ with growth function
  \[ v_{K_k}(n)\sim\exp(n^{1-(1-\alpha)^k}),\quad 2^{3-3/\alpha}+2^{2-2/\alpha}+2^{1-1/\alpha}=2,\]
  and a torsion-free group $H_k$ with growth function
  \[v_{H_k}(n)\sim\exp(\log(n)n^{1-(1-\alpha)^k}).
  \]
  These are the first examples of groups of intermediate growth for
  which the asymptotics of their growth function is known.

  We construct a group of intermediate growth that contains the group
  of finitely supported permutations of a countable set as a
  subgroup. This gives the first example of a group of intermediate
  growth containing an infinite simple group as a subgroup.
\end{abstract}
\maketitle

%%%%%%%%%%%%%%%%%%%%%%%%%%%%%%%%%%%%%%%%%%%%%%%%%%%%%%%%%%%%%%%%
\section{Introduction}
% We may consider a general lemma as follows:
% LEMMA. If $A$ has growth $v_A$, and $G$ has growth $v_G$,
% and inverse orbit growth $\Delta$ on $(X,*)$, and ``inverse orbit choice''
% growth $\Sigma$, and $n/\Delta$ is concave, then
% \[v_G(n) v_A(n/\Delta(n))^{\Delta(n)}\precsim v_W(n)\precsim v_G(n)
% v_A(n/\Delta(n))^{\Delta(n)}\Sigma(n).\]
% ENDOFLEMMA.

Let $G$ be a finitely generated group. One of its fundamental
invariants is the group's \emph{growth}, studied since the
1950's~\cites{krause:growth,svarts:growth}: choose a finite set $S$
generating $G$ as a monoid, and define its \emph{growth function}
\[v_{G,S}(n)=\#\{g\in G\mid g=s_1\dots s_m\text{ for some $m\le n$ and
  $s_i\in S$}\}.
\]
This function depends on $S$, but only mildly: given two functions $f,
g: \N \to \R_+$, we say that $g$ is asymptotically smaller than $f$,
and write $g \preceq f$, if there exist $C>0$ such $g(n) \le f(C n)$
for all sufficiently large $n$. We say that $f$ is asymptotically
equivalent to $g$, and write $f \sim g$, if $f \preceq g$ and $g
\preceq f$.  The asymptotic equivalence class of $v_{G,S}$ is
then independent of $S$, and is written $v_G$.

More information on growth of groups may be found
in~\cite{harpe:ggt}*{Chapters~VI--VIII} and in the
monograph~\cite{mann:growth}.

%The invariant $v_G$ is very coarse: for a finite group, it only
%remembers the group's order.

By a famous theorem of Gromov~\cite{gromov:nilpotent}, the growth
function is at most polynomial if and only if $G$ is virtually
nilpotent. Furthermore, in that case, there exists an integer $d$ such
that $v_G\sim n^d$; this fact is usually attributed to Guivarc'h and
to Bass~\cite{bass:nilpotent}, see also~\cite{harpe:ggt}*{VII.26}.

On the other hand, if $G$ is linear, or solvable, or word-hyperbolic,
then, unless $G$ is virtually nilpotent, $v_G\sim\exp(n)$, so all such
groups have asymptotically equivalent growth.

Milnor asked in~\cite{milnor:5603} whether $v_G$ could take values
strictly between polynomial and exponential functions; such groups
would be called groups of \emph{intermediate growth}. This question
was answered positively by Grigorchuk in
1983~\cites{grigorchuk:growth,grigorchuk:gdegree,grigorchuk:kyoto}.
Grigorchuk constructed a large class of groups of intermediate growth,
showing in particular that for any subexponential function $v(n)$
there exists a group of intermediate growth whose growth function is
greater than $v(n)$ for infinitely many $n$. Essentially the same
argument shows that for any subexponential $v(n)$ there exists a group
which is a direct sum of two Grigorchuk groups such that $v_{G,S}(n)
\ge v(n)$ for all sufficiently large $n$~\cite{erschler:degrees}.
Another result of Grigorchuk is the existence of groups with
incomparable growth functions.

There was, up to now, no group of intermediate growth for which the
asymptotics of its growth function is known. On the other hand, it was
known since the early 1970's that semigroups can have intermediate
growth, and some growth functions were computed explicitly: Govorov
shows in~\cite{MR0318199} that the semigroup
\[\langle x,y\mid x^iy^{i-1}x^i=y^ix^{i-1}y^i=0\quad\forall i\ge2\rangle_+\]
has growth function $\sim \exp(\sqrt n)$. The precise asymptotics of
the growth of more semigroups (including $2\times 2$ matrix
semigroups, and relatively free semigroups) were computed by
Lavrik~\cite{lavrik:interm}, Shneerson~\cite{shneerson:interm},
Okninski~\cite{MR2001g:20076}, and Reznykov and
Sushchanskii~\cite{bartholdi-r-s:interm2x2}. The following growth
types occur among these examples: $n^{\log n}$, $\exp(\sqrt n)$, and
$\exp(\sqrt n/\log n)$.

The most studied example of group of intermediate growth is the
\emph{first Grigorchuk group} $\Grig$. The best known estimates for
this group's growth are as follows: let $\eta$ be the real root of the
polynomial $X^3+X^2+X-2$, and set
$\alpha=\log(2)/\log(2/\eta)\approx0.7674$. Then
\begin{equation}\label{eq:grigupper}
  \exp(n^{0.5153})\precsim v_\Grig\precsim\exp(n^\alpha).
\end{equation}
For the lower bound, see~\cite{bartholdi:lowerbd} and
Leonov~\cite{leonov:lowerbd2}; for the upper bound,
see~\cite{bartholdi:upperbd}.

\subsection{Main results} Up to now no growth function $\not\sim
n^d,\exp(n)$ had been determined. In this paper, we construct the
first examples of groups of intermediate growth for which the
asymptotics of the growth function is known:
\begin{maintheorem}\label{mainthm:A}
\begin{enumerate}
\item For every $k\in\N$ there exists a finitely generated torsion
  group $K_k$ with growth
  \[v_{K_k}\sim\exp(n^{1-(1-\alpha)^k}).
  \]
\item For every $k\in\N$ there exists a finitely generated
  torsion-free group $H_k$ with growth
  \[v_{H_k}\sim\exp(\log(n)n^{1-(1-\alpha)^k}).\]
\end{enumerate}  
\end{maintheorem}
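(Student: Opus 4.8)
The plan is to obtain both parts of Theorem~\ref{mainthm:A} by iterating a single construction --- passing from a group $G$ to a \emph{permutational extension} of the first Grigorchuk group $\Grig$ by a direct power of $G$ --- and to control growth through a sharp two-sided formula relating the growth of a permutational extension to the growth of its quotient and to the \emph{inverted orbit growth} of the quotient's action. If $Q$ acts on a set $X$ with a base point $x_0$ and $w=s_1\cdots s_m$ is a word over a generating set of $Q$, call $\{s_is_{i+1}\cdots s_m\cdot x_0:1\le i\le m+1\}$ the inverted orbit of $w$, and let $\delta_Q(m)$ be the largest size of the inverted orbit of a word of length $\le m$. For a permutational extension $1\to N^{(X)}\to E\to Q\to 1$ --- for instance the permutational wreath product $N\wr_X Q$ --- I will prove
\[
\log v_E(n)\ \sim\ \max_{0\le m\le n}\Bigl(\log v_Q(m)+\Phi_N(n-m,\delta_Q(m))\Bigr),
\]
where $\Phi_N(B,p)=\max\{\sum_{i=1}^p\log v_N(b_i):b_i\ge 0,\ \sum_i b_i\le B\}$. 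The upper bound is a counting argument: to name an element of $E$ one specifies a ``tour'' word in $Q$, whose inverted orbit must contain the support of the $N^{(X)}$-component, together with an element of $N$ at each of its at most $\delta_Q(m)$ visited sites; splitting the length budget $n$ between the tour (length $m$) and the deposits (total length $n-m$) and invoking subadditivity of $\log v_N$ yields the estimate. The lower bound is the delicate half: one must produce $\exp$ of the right-hand side many \emph{distinct} elements of $E$ representable by words of length $O(n)$, which calls for words of length $m$ in $Q$ whose inverted orbit already contains a fixed fraction of $\delta_Q(m)$ points and which can be interleaved with ``deposit'' generators of $N$ at amortised constant cost, and for a verification that distinct configurations give distinct elements of $E$ (automatic when $E$ is split). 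Note the asymmetry: the upper bound uses only \emph{upper} bounds for $v_Q$ and for $\delta_Q$, whereas the lower bound uses only a \emph{lower} bound for $\delta_Q$; this is what will let us determine the growth of the constructed groups exactly, even though the exact growth of $\Grig$ itself is unknown.

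The two invariants of $\Grig$ that feed the formula are
\[
v_\Grig(n)\ \precsim\ \exp(n^\alpha)
\qquad\text{and}\qquad
\delta_\Grig(n)\ \sim\ n^\alpha,
\]
for the action of $\Grig$ on an infinite transitive Schreier graph $X=\Grig/\stab(\xi)$, $\xi\in\{0,1\}^{\N}$, which for suitable $\xi$ is a line. The first is the known upper bound~\cite{bartholdi:upperbd}. For the second: since the Schreier graph is a line, an inverted orbit lies in an interval about $x_0$, so $\delta_\Grig(n)$ is comparable to how far a word of length $n$ can push $x_0$ along this line; the bound $\delta_\Grig(n)\precsim n^\alpha$ then asserts that this distance is $\precsim n^\alpha$, which I will prove by a contraction estimate --- analogous to the one underlying the growth bound of~\cite{bartholdi:upperbd}, with coefficient $\eta$ satisfying $\eta^3+\eta^2+\eta=2$ (equivalently $2^{3-3/\alpha}+2^{2-2/\alpha}+2^{1-1/\alpha}=2$) --- and the matching $\delta_\Grig(n)\succsim n^\alpha$ by exhibiting explicit words that run along the Schreier line as efficiently as this contraction allows.

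For the torsion statement, set $K_0=\Z/2$ and let $K_{k+1}=K_k\wr_X\Grig$ be the permutational wreath product; since $X$ is a single transitive $\Grig$-orbit these groups are finitely generated, and since $\Z/2$ is finite and $\Grig$ is torsion each $K_k$ is an infinite torsion group. Writing $v_{K_k}\sim\exp(n^{\beta_k})$ and using concavity of $b\mapsto b^{\beta_k}$ to evaluate $\Phi_{K_k}(B,p)\sim p^{1-\beta_k}B^{\beta_k}$, the growth formula together with $\delta_\Grig(m)\sim m^\alpha$ and $v_\Grig(m)\precsim\exp(m^\alpha)$ gives
\[
\log v_{K_{k+1}}(n)\ \sim\ \max_{0\le m\le n}\ m^{\alpha(1-\beta_k)}(n-m)^{\beta_k},
\]
the term $\log v_\Grig(m)\le m^\alpha$ being dominated. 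As the maximum of $m^a(n-m)^b$ over $m\in[0,n]$ is $\sim n^{a+b}$, this yields the recursion $1-\beta_{k+1}=(1-\alpha)(1-\beta_k)$, hence $\beta_k=1-(1-\alpha)^k$ from $\beta_0=0$. For the torsion-free statement, take $H_0=\Z$; since $\log v_\Z(b)\sim\log b$, the same optimisation produces an extra factor $\log n$, giving $v_{H_k}\sim\exp(\log(n)\,n^{1-(1-\alpha)^k})$. Here, though, a split wreath product $A\wr_X\Grig$ is never torsion-free --- the abundant finite-order elements of $\Grig$ lift to torsion --- so $H_{k+1}$ is instead taken to be a \emph{non-split} permutational extension $1\to H_k^{(X)}\to H_{k+1}\to\Grig\to 1$ whose defining cocycle is chosen so that no finite-order element of $\Grig$ lifts to a finite-order element of $H_{k+1}$; this is possible because $H_k$, hence $H_k^{(X)}$, is torsion-free, and it makes $H_{k+1}$ torsion-free. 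Since the word metric of a permutational extension depends only on $N$, $Q$ and the action --- not on the extension class --- the growth computation is unchanged.

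\textbf{Main obstacle.} The upper bounds throughout are soft counting; the real work lies in the \emph{lower} bound of the growth formula and in $\delta_\Grig(n)\succsim n^\alpha$. Both force one to construct, very explicitly, words in $\Grig$ realising inverted orbits of size comparable to $n^\alpha$ on the Schreier line, arranged so that independent ``lamp'' deposits can be inserted at amortised constant cost, and then to check that the resulting elements of the permutational extension are pairwise distinct --- this last point being where the generality of ``permutational extension'' beyond split wreath products must be handled with care. For the torsion-free family there is the separate, cohomological, difficulty of choosing the non-split extension so that none of the many finite-order elements of $\Grig$ survives in $H_{k+1}$; once this is arranged, all the metric estimates carry over verbatim.
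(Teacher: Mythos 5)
Your plan for the torsion family $K_k=K_{k-1}\wr_X\Grig$ coincides with the paper's (Theorem~\ref{thm:K} via Lemma~\ref{lem:interm}), your two-sided growth formula is in substance the argument of Lemma~\ref{lem:interm} (the paper additionally invokes Lemma~\ref{support} to count possible supports, which your $\log v_Q(m)$ term does not literally bound, though for $\Grig$ both are $\precsim m^\alpha$ and harmless), and your optimisation yielding $\beta_{k+1}=1-(1-\alpha)(1-\beta_k)$ is correct. You also correctly isolate the asymmetry --- only the \emph{upper} bound on $v_\Grig$ and the \emph{lower} bound on the inverted orbit are needed to pin down the growth of the extension, even though the growth of $\Grig$ itself is not known exactly.

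The genuine gap is in the torsion-free part. You write ``the word metric of a permutational extension depends only on $N$, $Q$ and the action --- not on the extension class,'' and treat the torsion-free $H_{k+1}$ as a non-split extension $1\to H_k^{(X)}\to H_{k+1}\to\Grig\to 1$ with a cocycle chosen to kill torsion. Both halves of this are problematic. First, the claim about the word metric is false as stated: the cocycle enters the word length, and a badly behaved cocycle can blow up lengths. In the paper's Proposition~\ref{prop:tf} a real estimate is proved, namely that the cocycle $\eta(g,h)=\nu(gh)^{-1}\nu(g)\nu(h)$ associated with $1\to C^H\to H\to\Grig\to1$ is supported on $\precsim n^\alpha$ points and bounded by $n$ whenever $|g|,|h|\le n$; this is what makes the metric comparable to that of the split product. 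Second, you do not produce the required cocycle or prove its existence, and in fact the shape of the extension you propose is not the one that works: the paper's torsion-free $H=H_1$ has $\ker(H\to\Grig)\cong\Z\times\sum_X\Z^3$, with an unavoidable extra central $\Z=\langle a^2\rangle$ coming from the fact that $a$ must lift to an infinite-order element --- it is not of the form $\sum_X N$ alone. For $k\ge2$ the paper sidesteps the cocycle question entirely by taking \emph{split} wreath products $H_k=H_{k-1}\wr_X H$ with top group $H$ (not $\Grig$), which are torsion-free for the trivial reason that both factors are, and whose growth is computed by the same Lemma~\ref{lem:interm} argument since $H$ acts on $X$ through $\Grig$. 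So the hard construction is done exactly once, at $k=1$, by invoking Grigorchuk's explicit recursive group. As written, your ``choose a torsion-killing cocycle'' step is an unsupported assertion and would need either an existence proof or, better, the paper's route through Grigorchuk's $H$.

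A minor point: the heuristic ``since the Schreier graph is a line, $\delta_\Grig(n)$ is comparable to how far a word of length $n$ can push $x_0$'' is incorrect --- Remark~\ref{rem:invdir} of the paper exhibits words whose direct orbit (hence displacement) has length comparable to the word length while the inverted orbit has only logarithmic size. The correct upper bound $\delta_\Grig(n)\precsim n^\alpha$ comes from the contraction estimate you then invoke, not from a displacement bound, so this does not damage your argument, but the stated intuition should be discarded.
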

In fact, the group $H_1$ that we consider had already been studied by
Grigorchuk in~\cite{grigorchuk:gdegree}, who showed that it is
torsion-free and has subexponential growth, though he did not compute
its growth function.

%We show that some of the groups we study have new algebraic aspects,
%in particular, concerning countable subgroups of such groups.

A classical result due to Higman, Neumann and Neumann states that any
countable group can be embedded into a finitely generated
group~\cite{higman-n-n:embed}. Given a countable group,
it is natural to ask
which extra properties that finitely generated group may possess; and,
in the case that interests us, when that finitely generated group may
be taken of subexponential growth. We show:

\begin{maintheorem}[= Theorem~\ref{thm:embedSinfty}]\label{thm:C}
  The group of finitely supported permutations of $\Z$ embeds as a
  normal subgroup in a group of intermediate growth.
\end{maintheorem}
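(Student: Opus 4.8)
The plan is to realize the required group as a permutational extension of a group of intermediate growth acting on a countable set, with the finitary symmetric group of that set appearing as the base of the extension.

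\emph{Reduction.} It suffices to produce a finitely generated group $W$, acting faithfully and transitively on a countably infinite set $\Omega$, with $\operatorname{Sym}_{\mathrm{fin}}(\Omega)\le W$ and with $v_W$ subexponential. Indeed, any two countably infinite sets yield isomorphic finitary symmetric groups, so $\operatorname{Sym}_{\mathrm{fin}}(\Omega)\cong\operatorname{Sym}_{\mathrm{fin}}(\Z)$; it is normal in $W$ because conjugation in $\operatorname{Sym}(\Omega)$ preserves finiteness of support; and $W$ is not virtually nilpotent, since a finitely generated virtually nilpotent group is Noetherian while the infinite locally finite group $\operatorname{Sym}_{\mathrm{fin}}(\Omega)$ is not finitely generated. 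By Gromov's theorem $v_W$ is then superpolynomial, so $W$ has intermediate growth and the inclusion $\operatorname{Sym}_{\mathrm{fin}}(\Z)\hookrightarrow W$ is the desired embedding.

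\emph{Construction.} I would take a group $G$ of intermediate growth --- built from the self-similar groups of this paper (say one of the groups underlying Theorem~\ref{mainthm:A}, or a suitably tuned Grigorchuk group) --- equipped with an action on a countable set $\Omega$ carrying the ambient self-similar structure, and form the permutational extension $W:=\operatorname{Sym}_{\mathrm{fin}}(\Omega)\rtimes G$ (or a close variant). Finite generation presents no difficulty: $W$ is generated by a finite generating set of $G$ together with finitely many transpositions, and, since the Schreier graph of the $G$-action on $\Omega$ is connected, the $W$-conjugates of these transpositions generate all of $\operatorname{Sym}_{\mathrm{fin}}(\Omega)$. The delicate point is the choice of $\Omega$: it must make the Schreier graph \emph{highly distorted inside $W$}, so that the number of points of $\Omega$ reachable from a basepoint by $W$-words of length $\le n$ --- and, crucially, the number of finitely supported permutations of $W$-length $\le n$ --- grows only subexponentially. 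The geometric feature to engineer is that visiting $k$ far-apart points of $\Omega$ and transposing at each of them is costly, with cost growing roughly like the sum of the depths of those points; the right picture for such a Schreier graph is a ``fractal comb'' with growing teeth, or a tree of growing segments --- far from a line, yet still of subexponential growth since it is a quotient of the Cayley graph of $G$.

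\emph{Growth estimate and main obstacle.} Given such a pair $(G,\Omega)$, every element of $W$ has a ``portrait'': its $G$-component together with the finitely supported permutation it induces on $\Omega$, the latter decomposed recursively along the self-similar structure. Counting portraits realizable by a word of length $\le n$, one bounds the number of $G$-components by $v_G(n)=\exp(o(n))$ and the number of induced permutations, via the cost estimate above, also by $\exp(o(n))$, obtaining $v_W(n)=\exp(o(n))$. This last bound is the main obstacle, and it is precisely where a non-abelian, ``non-local'' base group is harder to handle than the abelian base of a permutational wreath product: a finitely supported permutation can move points between distinct branches of $\Omega$, so its portrait acquires ``crossing'' contributions absent from the wreath-product case, and one must show these crossings are few and confined near the root. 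That some genuine care is needed is clear from the fact that the finitary symmetric group readily produces exponential growth in naive constructions --- already the lamplighter-type group $\operatorname{Sym}_{\mathrm{fin}}(\Z)\rtimes\Z$ has exponential growth --- so the geometry of the permutational extension must be chosen with this in mind, and the subexponential bound on the number of cheap finitely supported permutations is the technical heart of the argument.
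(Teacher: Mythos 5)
Your construction is the right one — the paper also forms $H=\mathfrak S_\infty\rtimes\Grig$ where $\mathfrak S_\infty$ is the group of finitely supported permutations of the orbit $X$ of the ray $\rho=\mathsf1^\infty$ under the first Grigorchuk group — and your reduction (normality is automatic, any such $W$ is not virtually nilpotent because it contains an infinite locally finite subgroup, hence by Gromov the growth is intermediate once it is subexponential) is correct and essentially what the paper uses implicitly. But the proposal stops exactly at the step you correctly call ``the technical heart'': you do not actually bound the number of finitely supported permutations realizable by words of length $\le n$, you only describe it as an obstacle. Without that bound, the proof is not complete.

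Moreover, your intuition about \emph{why} the bound holds is off in a way that matters. You say the Schreier graph should be ``a fractal comb with growing teeth, or a tree of growing segments --- far from a line.'' In fact, the Schreier graph of $\Grig$ on $X$ \emph{is} a line (a one-ended ray; see Figure~\ref{fig:schreier} of the paper), and the paper makes a point of the fact (Example~\ref{ex:differentgrowth}) that the unmarked Schreier graph cannot detect the phenomenon: $\Grig$ and $\Erschler$ have the \emph{same} unmarked Schreier graph on their respective orbit rays, yet the permutational wreath product over one has intermediate growth and over the other exponential growth. The invariant that controls the estimate is not the shape of the Schreier graph but the \emph{inverted orbit growth} $\Delta(n)$ of the marked action: for a word $w=w_1\dots w_n$, the set $\mathcal O(w)=\{\rho w_{i+1}\cdots w_n: i=0,\dots,n\}$ has size $\precsim n^\alpha$ (Proposition~\ref{main}). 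Since in a word $s^{e_1}h_1\cdots s^{e_n}h_n$ each insertion of $s$ switches $(\mathsf0\rho)h_i\cdots h_n$ and $\rho h_i\cdots h_n$, the support of the resulting permutation lies inside the union of two inverted orbits and hence has size $\precsim n^\alpha$; the count of elements of $H$ of length $\le n$ is then $v_\Grig(n)\cdot\binom{n}{n^\alpha}\cdot(n^\alpha)!\precsim\exp(\log(n)\,n^\alpha)$, which is subexponential. This is cleaner than your anticipated ``portrait with crossing contributions'' analysis, which is not needed: one does not decompose the permutation self-similarly at all, one just bounds its support globally. In short: right construction, right awareness of where the difficulty lies, but the decisive mechanism (sublinear inverted-orbit growth) is missing, and the geometric heuristic you substitute for it points in the wrong direction.
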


The theorem shows, in particular, that some groups of intermediate
growth contain as a subgroup an infinite simple group. In our examples
such infinite simple subgroups are not finitely generated. An open
question due to Grigorchuk asks whether any infinite finitely
generated simple group has exponential growth. All known example of
finitely generated infinite simple groups are non-amenable.

%Theorem~\ref{thm:C} leads us to ask the following
\begin{mainquestion}
  Does every countable group locally of subexponential growth embed in
  a finitely generated group of subexponential growth?
\end{mainquestion}

\subsection{Outline of the approach}
Our strategy may be summarized as follows. We consider groups $B,G$
with respective generating sets $R,S$, such that: $S$ is finite; $G$
acts on $B$; the action of $G$ on $B$ permutes $R$; and $R$ has
finitely many $G$-orbits. Typically, $G$ will act on a set $X$, and
$B$ will be either: a direct sum $\sum_XA$ of a finitely generated
group; a group of permutations of $X$; or a free Abelian, nilpotent
etc.\ group generated by a set in bijection with $X$. We then take the
semidirect product $W$ of $B$ with $G$. It turns out that the growth
of $W$ is well controlled by the growth of ``inverted orbits'' of $G$
on $X$. By definition, the inverted orbit of a word $w=g_1\dots g_n$
over $G$ is $\{x_0g_1\dots g_n,x_0g_2\dots g_n,\dots,x_0g_n\}$ for a
basepoint $x_0\in X$.

The idea behind the fact that groups of Grigorchuk have intermediate
growth is a certain contraction property. For some of Grigorchuk's
groups $G$, including the first one, this property states that there
exist an injective homomorphism $\psi=(\psi_1, \dots, \psi_d)$ from a
finite index subgroup $H$ in $G$ to the direct product $G^d$, such
that for some finite generating set $S$ in $G$, some $\eta<1$, some
$C>0$, and some proper norm $|\cdot|$ on $G$, the following holds for
all $h\in H$:
\begin{equation}\label{eq:usualcontraction}
\sum_{i=1}^d |\psi_i(h)| \le \eta|h|+C.
\end{equation}

This implies that the growth of $G$ is bounded from above by
$\exp(n^\beta)$ for a constant $\beta<1$ depending only on $\eta$.
% A weakened form of this property holds for all Grigorchuk groups and
% guarantees that all these groups have subexponential growth.

In this paper we introduce and study another contraction property,
related to the sublinear growth of the inverted orbits for a group
action. We prove that this property holds for the action on the
boundary of many Grigorchuk groups, including the first Grigorchuk
group. The property implies that not only these groups, but a large
family of extensions of these groups have subexponential growth. We
mention, however, that for some Grigorchuk groups the inverted orbit
growth is linear (see Example~\ref{ex:expgrowth}).

The contracting property~\eqref{eq:usualcontraction} in the case
$d=1$, deserves special mention; the group $G$ then admits a {\it
  dilatation}. This implies that the growth of $G$ is polynomial.
Many nilpotent groups admit a dilatation. However, no nilpotent group
may satisfy the ``sublinear inverted orbit growth'' property studied in
this paper (see Remark~\ref{rem:nilpotentactions}).

\subsection{Acknowledgments} The authors are grateful to Yves de
Cornulier and Pierre de la Harpe for their helpful remarks on a
preliminary version of this text, and to the referee for his/her
very careful reading of the manuscript.

%%%%%%%%%%%%%%%%%%%%%%%%%%%%%%%%%%%%%%%%%%%%%%%%%%%%%%%%%%%%%%%%
\section{Permutational wreath products and extensions}
We consider groups $A$, $G$ and a set $X$, such that $G$ acts on $X$
from the right. The \emph{wreath product} $W=A\wr_XG$ is the
semidirect product of $\sum_X A$ with $G$.  The \emph{support} $\sup
f$ of a function $f:X\to A$ consists of those $x\in X$ such that
$f(x)\neq1$.  We describe elements of $\sum_XA$ as finitely supported
functions $X\to A$.  The (left) action of $G$ on $\sum_X A$ is then
defined by $(g\cdot f)(x) = f(xg)$; observe that for $g_1$, $g_2$ in
$G$
\[
(g_1g_2\cdot f)(y) = f(y(g_1g_2)) = f((yg_1)g_2) = (g_2\cdot f)(yg_1) = (g_1\cdot g_2\cdot f)(y).
\]
We have in particular $\sup(g^{-1}\cdot f)=\sup(f)g$.  If $A$ act on a
set $Y$ from the right, then $W$ naturally acts on $X\times Y$ from
the right, by $(x,y)f=(x,yf(x))$ and $(x,y)g=(xg,y)$.

Suppose now that $A$ and $G$ are finitely generated, and that the
action of $G$ on $X$ is transitive.  Fix generating sets $S_A$ and
$S_G$ of $A$ and $G$ respectively, and fix a basepoint $x_0\in X$. The
wreath product is generated by $S=S_A \cup S_G$, in which we identify
$G$ with its image under the embedding $g \to (1,g)$, and identify $A$
with its image under the embedding $a \to (f_a,1)$; here $f_a:X \to
A$ is defined by $f_a(x_0)=a$ and $f(x)=1$ for all $x\ne x_0$. We call
$S$ the \emph{standard generating set} of $W$ defined by $S_A$, $S_G$.
Analogously, if the action of $G$ on $X$ has finitely many orbits,
then $W$ is finitely generated by $S_G\cup (S_A\times (X/G))$.

\subsection{The Cayley graph of a permutational wreath product} The
Cayley graph of $W=(\sum_XA)\rtimes G$ with respect to the generating
set $S$ may be described as follows. Elements of $W$ are written $fg$
with $f\in\sum_XA$ and $g\in G$; multiplication is given by
$(f_1g_1)(f_2g_2)=f_1(g_1\cdot f_2)\,g_1g_2$.

Consider a word $v=s_1s_2 \dots s_\ell$, with all $s_i \in S$, and
write its value in $W$ as $f_vg_v$ with $f\in\sum_XA$ and $g_v\in
G$. Set $u=f_ug_u= s_1 s_2 \dots s_{\ell-1}$. Here $g_u, g_v$ belong
to $G$, and $f_u, f_v:X\to A$.

We consider two cases, depending on whether $s_\ell\in S_A$ or
$s_\ell\in S_G$. If $s_\ell\in S_A$, we have an edge of ``A'' type
from $u$ to $v$. The multiplication formula gives $g_v=g_u$ and
$f_v(x)=f_u(x)$ for all $x\neq x_0g_u^{-1}$, while
$f_v(x_0g_u^{-1})=f_u(x_0g_u^{-1})s_\ell$.  If $s_\ell\in S_G$, we
have an edge of ``G'' type from $u$ to $v$. Then $f_v=f_u$, and
$g_v=g_us_\ell$.

There is an alternative description of the edges in the Cayley graph,
which appears if we write elements of $(\sum_XA)\rtimes G$ in the form
$gf$ with $g\in G$ and $f\in\sum_XA$. Their product is then given by
$(g_1f_1)(g_2f_2)=g_1g_2(g_2^{-1}\cdot f_1)f_2$.

In that notation, if there is an edge of ``A'' type from $u=g_uf_u$ to
$v=g_vf_v$, then we have $g_u=g_v$, and $f_v=f_u$ except at $x_0$
where $f_v(x_0)=f_u(x_0)s_\ell$.  On the other hand, if there is an
edge of ``G'' type from $u$ to $v$, then we have $g_v=g_us_\ell$ and
$f_v=s_\ell^{-1}\cdot f_u$.

For $i=1,\dots,\ell$, set now $g_i=s_i,a_i=1$ whenever $s_i\in S_G$,
and $g_i=1,a_i=s_i$ whenever $s_i\in S_A$. Still writing $v=g_vf_v$,
we then have
\[v=a_1g_1\cdots a_\ell g_\ell=g_1\cdots g_\ell a_1^{g_1\cdots g_\ell}\cdots a_{\ell-1}^{g_{\ell-1}g_\ell}a_\ell^{g_\ell},
\]
so $g_v=g_1 g_2 \cdots g_\ell$ and $f_v=((g_1\cdots g_\ell)^{-1}\cdot
f_{a_1})((g_2\cdots g_\ell)^{-1}\cdot f_{a_2})\cdots (g_\ell^{-1}\cdot
f_{a_\ell})$; we observe that the support of $f_v$ is contained in
$\{x_0 g_\ell,x_0g_{\ell-1}g_\ell,\dots,x_0g_1g_2\cdots g_\ell\}$. In
other words, in order to understand the support of the configuration,
we have to study \emph{inverted orbits} of the action of $G$ on $X$
and the number of distinct points visited by these orbits.

\begin{remark}
  In case $G=X$, there is no difference between counting the number of
  points on the orbits or on the inverted orbits ($x_0=1$, $G$ acts
  on $X$ both from the right and from the left, and the inverted
  orbits for the right action are usual orbits for the left action).
  This is no longer the case if $X\ne G$.
\end{remark}

\begin{remark}
  We might wonder to which degree the geometry of the Cayley graphs of
  $A$ and $G$, and of the Schreier graph of $X$ (the graph with vertex
  set $X$, and an edge from $x$ to $xs$ for all $x\in X$ and generator
  $s$ of $G$), determine the geometry of the wreath product.

  In contrast with the case $X=G$ (``usual'' wreath products), the
  Cayley graph of the permutational wreath product is in no way
  defined by the unmarked Cayley graphs of $A$ and $G$ and the
  Schreier graph of $X$. We will see in the sequel that the following
  may happen: a group $G$ acts on $X_1$ and $X_2$, the unmarked
  Schreier graphs of $X_1$ and $X_2$ are the same, but $A\wr_{X_1}G$
  has exponential growth (for some finite group $A$), and
  $A\wr_{X_2}G$ has intermediate growth (see
  Example~\ref{ex:differentgrowth}).
\end{remark}

\subsection{Inverted orbits}\label{ss:invorb}
We formalize the discussion above as follows. Fix a group $G$ acting
on the right on a set $X$; fix a set $S$ generating $G$ as a monoid;
and fix a basepoint $x_0\in X$. Denote by $S^*$ the set of words over
$S$. For a word $w=w_1\dots w_\ell\in S^*$, its \emph{inverted orbit}
is
\[\mathcal O(w)=\{x_0,x_0 w_\ell,x_0w_{\ell-1}w_\ell,\dots,x_0w_1w_2\dots w_\ell\}.
\]
Its \emph{inverted orbit growth} is
\[\delta(w)=\#\mathcal O(w).\]
The \emph{inverted orbit growth function} of $G$ is the function
\[\Delta(n)=\max\{\delta(w)\mid n=|w|\}.\]
Clearly $\Delta(n)\le n+1$; and, if the orbit of $x_0$ is infinite,
$\Delta(n(n-1)/2)\ge n$, so $\Delta(n)\succsim\sqrt n$. Indeed,
consider a word $w=u_n\dots u_1$ in which $u_i$ is a word of length
$\le i$, chosen such that
$x_0u_i\not\in\{x_0,x_0u_{i-1}^{-1},x_0u_{i-2}^{-1}u_{i-1}^{-1},\dots,x_0u_1^{-1}\dots
u_{i-1}^{-1}\}$; then $\delta(w)\ge n+1$.

The functions $\delta$ and $\Delta$ depend on the choice of $x_0$ and
$S$. However, it is easy to see that their asymptotics do not depend
on the choice of the basepoint $x_0$ and the generating set $S$:
\begin{lemma}\label{le:indepbasepoint}
  If $G$ is finitely generated, then the $\sim$-equivalence class of
  $\Delta$ does not depend on the choice of $S$.

  If $G$ acts transitively on $X$, then the $\sim$-equivalence class
  of $\Delta$ does not depend on the choice of $x_0$.
\end{lemma}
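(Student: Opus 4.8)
The plan is to prove the two independence statements separately, in each case exhibiting an explicit comparison between the inverted‑orbit growth functions for different choices of data, using only elementary word‑combinatorics.

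First I would treat the independence of $S$, assuming $x_0$ fixed. Let $S$ and $S'$ be two finite monoid‑generating sets of $G$. Each $s'\in S'$ can be written as a word $u_{s'}$ over $S$ of some length; let $L=\max_{s'\in S'}|u_{s'}|$. Given a word $w'=w'_1\dots w'_\ell$ over $S'$, substitute each letter by the corresponding $u_{s'}$ to obtain a word $w$ over $S$ with $|w|\le L\ell$. The key observation is that the inverted orbit of $w'$ is \emph{contained} in that of $w$: indeed the suffix products of $w'$ are a subset of the suffix products of $w$, so each point $x_0\,w'_j\cdots w'_\ell$ appears among the points $x_0\,w_i\cdots w_{|w|}$. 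Hence $\delta(w')\le\delta(w)\le\Delta_S(L\ell)$, which gives $\Delta_{S'}(\ell)\le\Delta_S(L\ell)$, i.e. $\Delta_{S'}\preceq\Delta_S$; by symmetry $\Delta_{S'}\sim\Delta_S$.

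Next I would treat the independence of $x_0$, assuming $G$ acts transitively and $S$ is fixed. Let $x_0,x_1\in X$; by transitivity pick $t\in G$ with $x_0t=x_1$, and write $t$ and $t^{-1}$ as words over $S$ of length $\le m$. Given a word $w=w_1\dots w_\ell$ realizing an inverted orbit based at $x_1$, I would form the word $w''=p\,w\,q$, where $p$ is a word for $t$ and $q$ is a word for $t^{-1}$, so $|w''|\le\ell+2m$. A short computation with suffix products shows that the inverted orbit of $w''$ based at $x_0$ contains (a translate, by the fixed bijection $x\mapsto xt$, of) the inverted orbit of $w$ based at $x_1$ for the relevant middle portion, while the finitely many extra points contributed by the prefix $p$ and suffix $q$ add at most $2m$ to the count. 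This yields $\delta_{x_1}(w)\le\delta_{x_0}(w'')+O(1)\le\Delta_{x_0}(\ell+2m)+2m$, hence $\Delta_{x_1}\preceq\Delta_{x_0}$, and by symmetry the two are $\sim$‑equivalent.

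The only mildly delicate point — and the step I would be most careful about — is the bookkeeping of suffix products in the $x_0$‑independence argument: one must check that reading the word $q=$ (word for $t^{-1}$) at the \emph{end} of $w''$ means that, for a suffix starting inside the original $w$, the corresponding point is $x_0$ times (that suffix of $w$) times $t^{-1}$, i.e. exactly the point $x_1$ times (that suffix of $w$) transported back by $t^{-1}$; since $x\mapsto xt^{-1}$ is a bijection of $X$, distinct points stay distinct, so no collapse occurs and the orbit sizes really do compare as claimed. Everything else is the routine observation that inserting letters into a word can only enlarge the set of suffix‑product points, up to the additive and multiplicative constants produced by the insertions.
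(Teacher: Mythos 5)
Your argument for the independence of $S$ is correct and is essentially the paper's argument (you rewrite letters of $S'$ as words over $S$; the paper rewrites letters of $S$ as words over $S'$ — same idea either way, since the suffix products of the original word are a subset of the suffix products of the substituted word).

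Your argument for the independence of $x_0$ has a genuine error, and I think the delicate step you flag is exactly where it breaks. Write $s=w_i\cdots w_\ell$ for a suffix of $w$. You correctly compute that the corresponding middle‑portion point of $\mathcal O_{x_0}(w'')$ is $x_0\,s\,t^{-1}$, and you then identify this with ``the point $x_1 s$ transported back by $t^{-1}$''. But $x_1 s\,t^{-1}=x_0\,t\,s\,t^{-1}$, which is \emph{not} $x_0\,s\,t^{-1}$ unless $t$ commutes with $s$. What you have in fact shown is that the middle portion of $\mathcal O_{x_0}(w'')$ is the set $\bigl(\mathcal O_{x_0}(w)\bigr)t^{-1}$, a bijective translate of the inverted orbit of $w$ \emph{based at $x_0$}, not at $x_1$. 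So the estimate you obtain is $\delta_{x_0}(w)\le\delta_{x_0}(w'')$, which compares $\delta_{x_0}$ with itself and says nothing about $\delta_{x_1}$. More conceptually: the inverted orbit records $x_0$ times suffix products, with the basepoint sitting on the \emph{left}; pre‑ or post‑appending a word for $t$ or $t^{-1}$ can only multiply on the right of those products (or, for the prefix, affect a single point), so it cannot turn the basepoint $x_0$ into $x_1=x_0t$.

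The fix is to conjugate rather than append. Since $x_1 w_i\cdots w_\ell = x_0\,t\,w_i\cdots w_\ell = x_0\,(t w_i t^{-1})\cdots(t w_\ell t^{-1})\,t$, one should replace each letter $w_i$ by a word for its conjugate, not insert words for $t$ and $t^{-1}$ at the ends. This is what the paper does: it passes to the conjugate generating set $S'=\{g^{-1}sg : s\in S\}$ (where $x_0g=x_1$), rewrites $w$ letterwise as $w^g$ over $S'$, and observes that right‑multiplication by $g$ is a bijection carrying $\{x_0\,w_i\cdots w_\ell\}$ onto $\{x_1\,(g^{-1}w_ig)\cdots(g^{-1}w_\ell g)\}$, so $\delta_{x_0,S}(w)=\delta_{x_1,S'}(w^g)$ exactly. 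This reduces the basepoint‑independence to the generating‑set‑independence already proved, with no additive error terms at all.
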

\begin{proof}
  Let $S$ and $S'$ be two finite generating sets for $G$; write each
  element of $S$ as a word over $S'$; and let $C$ be the maximum of
  such lengths. We temporarily write $\Delta_S$ and $\Delta_{x_0,S}$
  to remember the dependence on the choices of $x_0\in X$ and
  $S\subset G$.

  Given $w\in S^*$, let $w'$ be the corresponding rewritten word over
  $S'$. We have $|w'|\le C|w|$ and $\delta_S(w)\le\delta_{S'}(w')$, so
  $\Delta_S(n)\le\Delta_{S'}(Cn)$ for all $n\in\N$, and
  $\Delta_S\precsim\Delta_{S'}$. The reverse inequality gives
  $\Delta_S\sim\Delta_{S'}$, and proves the first part of the lemma.

  Now consider two points $x_0,x_1\in X$, and an element $g\in G$ with
  $x_0g=x_1$, of length $k$. Set $S'=\{s^g =g^{-1}sg \mid s\in S\}$.
  It is clear that $S'$ is a generating set of $G$.  Let $w=w_1w_2
  \dots w_\ell$ be a given word, and consider the word $w^g = (g^{-1}
  w_1 g) (g^{-1} w_2 g) \dots (g^{-1} w_\ell g)$ over the alphabet
  $S'$.  Given $i,j\in\{0,\dots,\ell\}$, observe that $x_0 w_i w_{i+1}
  \dots w_\ell \ne x_0 w_j w_{j+1} \dots w_\ell$ if and only if $x_0
  w_i\dots w_\ell g \ne x_0 w_j\dots w_\ell g$, if and only if
  \[
  x_1 (g^{-1}w_ig)\dots g^{-1}w_\ell g \neq x_1 (g^{-1}w_jg)\dots g^{-1}w_\ell g.
  \]
  This implies $\delta_{x_0,S}(w)=\delta_{x_1,S'}(w^g)$. Therefore, we
  have $\Delta_{x_0,S}(n)=\Delta_{x_1,S'}(n)$; and the first part of
  the lemma gives $\Delta_{x_0,S}\sim\Delta_{x_1,S}$.
\end{proof}

%%%%%%%%%%%%%%%%%%%%%%%%%%%%%%%%%%%%%%%%%%%%%%%%%%%%%%%%%%%%%%%%
\section{Self-similar groups}
Below we recall the definition of some of Grigorchuk's groups. They
are groups acting on a rooted tree. The first Grigorchuk group belongs
to a smaller class of self-similar groups.  We fix our notation for
such groups; for more information on self-similar groups, see
Nekrashevych's book~\cite{nekrashevych:ssg}. Fix an integer $d\ge2$
called the \emph{degree}. Words
$q=q_1\dots q_n\in\{\mathsf0,\dots,\mathsf{d-1}\}^*$ form the vertex set of a
rooted regular tree $\mathcal T$, with root the empty word; and
$q_1\dots q_n$ connected by an edge to $q_1\dots q_{n-1}$.

A \emph{self-similar group} is, by definition, a group
\emph{presented} by a map, called the \emph{wreath recursion},
$\psi:G\to G\wr\mathfrak S_d$, from $G$ to its permutational wreath
product with the symmetric group $\mathfrak S_d$. We write images
under $\psi$ in the form
\[\psi(g)=\pair<g_0,\dots,g_{d-1}>\pi\text{ with }(g_0,\dots,g_{d-1})\in G^d\text{ and }\pi\in\mathfrak S_d.\]
The wreath recursion $\psi$ defines an action of $G$ by isometries on
$\mathcal T$, as follows. Consider $g\in G$ and $q=q_1\dots
q_n\in\mathcal T$. If $n=0$, then $qg=q$. Otherwise, compute
$\psi(g)=\pair<g_0,\dots,g_{d-1}>\pi$, and set inductively $qg=(q_1^\pi)(q_2\dots q_n)g_{q_1}$.

When a self-similar group is given by its wreath recursion, it is
assumed that the action on $\mathcal T$ is faithful; namely, the group
$G$ \emph{defined} by the wreath recursion
$\psi:\Gamma\to\Gamma\wr\mathfrak S_d$ is the quotient $G$ of $\Gamma$
by the kernel of $\Gamma$'s action on $\mathcal T$.  We then drop
`$\psi$' from the notation, and write the wreath recursion on $G$ in
the form
\[g=\pair<g_0,\dots,g_{d-1}>\pi\text{ or }g=\pi\pair<g_0,\dots,g_{d-1}>.\]

The boundary $\partial\mathcal T$ of $\mathcal T$ consists of infinite
sequences; its elements are called \emph{rays}. If $G$ is a
self-similar group acting on $\mathcal T$, then $G$ also acts on
$\partial\mathcal T$. Mainly, the action of $G$ we will be interested
in is that on a ray orbit $\rho G$.

\subsection{The first Grigorchuk group}
An important example of self-similar group was extensively studied by
Grigorchuk~\cite{grigorchuk:(un)solved}. It may be defined by its
wreath recursion as the $4$-generated group $\Grig$ with generators
$\{a,b,c,d\}$; if $\varepsilon$ denote the transposition
$(\mathsf0,\mathsf1)$,
\begin{equation}\label{eqgrig}
\psi:a\mapsto\pair<1,1>\varepsilon,\quad b\mapsto\pair<a,c>,\quad
c\mapsto\pair<a,d>,\quad d\mapsto\pair<1,b>
\end{equation}
Grigorchuk proved in~\cite{grigorchuk:burnside} that $\Grig$ is an
infinite, finitely generated torsion group; and,
in~\cite{grigorchuk:growth}, that $\Grig$ is a group of intermediate
growth.

A presentation of $\Grig$ by generators and relators was given by
Lysionok~\cite{lysionok:pres}: define the endomorphism $\sigma$ of
$\{a,b,c,d\}^*$ by
\begin{equation}\label{eq:sigma}
  \sigma:a\mapsto aca,\quad b\mapsto d,\quad c\mapsto b,\quad d\mapsto c.
\end{equation}
Then
\begin{equation}\label{eq:lysionok}
\Grig=\langle a,b,c,d\mid a^2,b^2,c^2,d^2,bcd,\sigma^n([d,d^a]),\sigma^n([d,d^{(ac)^2a}])\text{ for all }n\in\N\rangle.
\end{equation}

The notation $\Grig$ arises as follows: Grigorchuk defined a continuum
of groups $G_\omega$, for $\omega\in\{0,1,2\}^\N$. The first
Grigorchuk group is in fact the group $G_{(012)^\infty}$ defined by a
periodic sequence $\omega$.

\subsection{Another Grigorchuk group}
By $\Erschler$ we denote in the sequel the following Grigorchuk group.
It is the group generated by $a,b,c,d$ and given by the recursion
\begin{equation}\label{eqgrigdi}
\psi:a\mapsto\pair<1,1>\varepsilon,\quad b\mapsto\pair<1,c>,\quad
c\mapsto\pair<a,b>,\quad d\mapsto\pair<a,d>
\end{equation}
In contrast with the first Grigorchuk group, $\Erschler$ contains
elements of infinite order.  Indeed, the subgroup generated by
$\{a,d\}$ is isomorphic to the infinite dihedral group. Furthermore,
the infinite-order element $ad$ acts freely on the boundary
$\partial\mathcal T$ of the tree on which $\Erschler$
acts~\cite{grigorchuk:gdegree}*{proof of Lemma~9.10}.  If we denote by
$\rho$ the ray $\mathsf1^\infty$, we then have for all integers $m\ne
n$
\begin{equation}\label{eq:freediadicorbit}
  \rho (ad)^n \ne \rho (ad)^m.
\end{equation}
The group $\Erschler$ has intermediate
growth~\cite{erschler:boundarysubexp}; the best known lower and upper
bounds are, for all $\epsilon>0$,
\[\exp(n/\log^{2+\epsilon}(n))\precsim v_\Erschler(n)\precsim\exp(n/\log^{1-\epsilon}(n)).\]

%%%%%%%%%%%%%%%%%%%%%%%%%%%%%%%%%%%%%%%%%%%%%%%%%%%%%%%%%%%%%%%%
\section{Inverted orbit growth for Grigorchuk's first group}
We fix $\rho=\mathsf1^\infty$ the ray in the binary tree $\mathcal T$.
This ray is fixed by $b,c,d$. We write $\Omega=\{a,b,c,d\}^*$ the set of
words over the standard generators. The length of $w\in \Omega$ is
written $|w|$.  The recursion~\eqref{eqgrig} gives rise to a map
$\Omega\to \mathfrak S_2\times \Omega\times \Omega$, defined by the same
formulas. We still write it in the form
$w\mapsto\varepsilon^s\pair<u,v>$.

We call a word $w\in \Omega$ \emph{pre-reduced}, if it does not
contain two consecutive occurrences of $b,c,d$; the
\emph{pre-reduction} of $w$ is the word obtained from $w$ by deleting
consecutive $bb,cc,dd$ and replacing $bc$ or $cb$ by $d$, $cd$ or $dc$
by $b$, and $db$ or $bd$ by $c$. These operations do not change the
image of the word in $\Grig$. Recall that, for a word $w=w_1\dots
w_n\in \Omega$, we defined
\[\delta(w)=\#\{\rho w_{i+1}\dots w_n\mid i=0,\dots,n\}.\]

\begin{lemma}
  $\delta(w)=\delta(\text{its pre-reduction})$.
\end{lemma}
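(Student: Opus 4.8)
The plan is to show that a single pre-reduction step does not change $\delta$, and then iterate; since pre-reduction terminates after finitely many such steps, this suffices. A pre-reduction step replaces, in the word $w=w_1\dots w_n$, a factor $w_iw_{i+1}$ (with both letters in $\{b,c,d\}$) either by a single letter in $\{b,c,d\}$ (the cases $bc\mapsto d$, etc.) or by the empty word (the cases $bb,cc,dd$). Write $w'$ for the result. The key observation is that $b,c,d$ all fix the ray $\rho=\mathsf1^\infty$, and more: multiplying any group element on the \emph{right} by an element of $\langle b,c,d\rangle$ does not move $\rho$ at all, because $\rho\langle b,c,d\rangle=\{\rho\}$. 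Hence for each suffix $w_{j+1}\dots w_n$ of $w$ whose starting index $j+1$ lies to the right of the replaced factor (i.e. $j\ge i+1$), the point $\rho w_{j+1}\dots w_n$ is literally unchanged when we pass from $w$ to $w'$ (the suffix is identical). For suffixes starting at or before index $i$, we must compare $\rho w_{j+1}\dots w_n$ with $\rho w'_{j+1}\dots w'_{n'}$; here the two suffixes differ only in that the factor $w_iw_{i+1}$ has been replaced, but since $w_iw_{i+1}$ and its replacement represent the same element of $\Grig$, the two suffixes represent the same element of $\Grig$, so they send $\rho$ to the same point.

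So far this shows $\mathcal O(w')\subseteq\mathcal O(w)$, with the only possible loss being the point contributed by the suffix that \emph{starts exactly between} $w_i$ and $w_{i+1}$, namely $\rho w_{i+1}\dots w_n$, which has no counterpart in the list for $w'$. The remaining step is to check that this point is already accounted for by another suffix of $w$, so that removing it does not shrink the set. Here is where the two cases diverge slightly. If $w_iw_{i+1}\in\{bb,cc,dd\}$, then $w_{i+1}$ alone fixes $\rho$, so $\rho w_{i+1}\dots w_n=\rho w_{i+2}\dots w_n$, and the latter suffix survives in $w'$ (it is the suffix starting just after the deleted factor). If $w_iw_{i+1}\mapsto x\in\{b,c,d\}$, then $x$ fixes $\rho$, so $\rho w_{i+1}\dots w_n = \rho x w_{i+2}\dots w_n = \rho w_{i+2}\dots w_n$ as well, again a surviving suffix. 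In both cases the orphaned point coincides with a point still present, so $\mathcal O(w)=\mathcal O(w')$ and therefore $\delta(w)=\delta(w')$.

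I expect the main obstacle — really the only point that needs care — to be the bookkeeping of indices: when the factor $w_iw_{i+1}$ is deleted or shortened, the suffixes of $w'$ are in bijection with the suffixes of $w$ starting outside the factor, and one must verify that under this bijection each suffix of $w'$ evaluates $\rho$ to the same point as the corresponding suffix of $w$. This is immediate to the right of the factor (identical suffixes) and follows from ``$w_iw_{i+1}=$ its replacement in $\Grig$'' to the left, but it should be stated cleanly. Once the single-step equality $\delta(w)=\delta(w')$ is in hand, the lemma follows because finitely many pre-reduction steps transform $w$ into its pre-reduction, each step preserving $\delta$.
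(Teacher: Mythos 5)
Your proposal is correct and follows essentially the same route as the paper's proof: both arguments reduce to a single pre-reduction step, compare the suffixes of $w$ with those of the shortened word position by position, and observe that the one suffix with no counterpart (the one beginning inside the replaced factor) sends $\rho$ to the same point as the next suffix over, because letters in $\{b,c,d\}$ fix $\rho$. The only detail worth tightening is that in the deletion case $bb,cc,dd\mapsto\emptyset$ the length drops by two, so strictly speaking two suffixes of $w$ (those beginning at positions $i$ and $i+1$ in your notation) are collapsed onto the single surviving suffix beginning at $i+2$ — but since $w_iw_{i+1}=1$ and $w_{i+1}$ fixes $\rho$, all three evaluate $\rho$ to the same point, so no orbit point is lost; your argument already contains everything needed to see this.
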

\begin{proof}
  Consider a subword $w_jw_{j+1}$ of $w$ consisting only of $b,c,d$'s,
  and let $u$ denote the shorter word obtained by replacing
  $w_jw_{j+1}$ by its value. In computing $\delta(w)$, either $i\le j$ or
  $i>j+1$, in which case $\rho w_i\dots w_n=\rho u_i\dots u_{n-1}$; or
  $i=j+1$, in which case $\rho w_i\dots w_n=\rho w_{j+2}\dots w_n=\rho
  u_{i+1}\dots u_{n-1}$, because $w_j$ and $w_{j+1}$ fix $\rho$.
\end{proof}

Let $\eta\approx0.811$ be the real root of the polynomial
$X^3+X^2+X-2$, and consider on $\Omega$ the norm defined by
\[\|a\|=1-\eta^3,\;\|b\|=\eta^3,\;\|c\|=1-\eta^2,\;\|d\|=1-\eta;\]
namely, for a word $w=w_1\dots w_n\in\Omega$ set
$\|w\|=\|w_1\|+\dots+\|w_n\|$.  The norm induced on $\Grig$ by $\|.\|$
%%!!! more here?
was considered by the first author in~\cite{bartholdi:upperbd}.  As we
have already mentioned, the first Grigorchuk group satisfies the
contraction property in Equation~\eqref{eq:usualcontraction}. Here as
a word metric in $\Grig$ one can consider the word metric with respect
to the generating set $a,b,c,d$.  The idea of~\cite{bartholdi:upperbd}
was that if instead of the word metric we consider the norm as defined
above, this leads to a better contraction coefficient $\eta$
in~\eqref{eq:usualcontraction} and a better upper bound of the form
$\exp(n^\alpha)$ for the growth of $\Grig$. In this paper we use this
norm in order to get upper bounds on the growth for extensions of
$\Grig$.

Note that the norm $\|\cdot\|$ and the word length $|\cdot|$ are
equivalent. If $w$ is pre-reduced of length $n$, then it contains at
least $(n-1)/2$ times the letter `$a$'. We may therefore apply the
argument in~\cite{bartholdi:upperbd}*{Proposition~4.2}, which we
reproduce here for completeness, with words \emph{in lieu} of group
elements:
\begin{lemma}[see~\cite{bartholdi:upperbd}*{Proposition~4.2}]\label{lem:contract}
  Consider $w\in \Omega$ pre-reduced, and write
  $w=\varepsilon^s\pair<u,v>$ for $u,v\in \Omega$ and
  $s\in\{0,1\}$. Define $C=\eta \|a\|$. We then have
  \[
   \|u\|+\|v\|  \le \eta\|w\|+C.
  \]
\end{lemma}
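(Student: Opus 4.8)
The plan is to analyze how the wreath recursion~\eqref{eqgrig} acts on the norm $\|\cdot\|$ letter by letter, and then assemble a sum over the pre-reduced word $w$. The essential point is that the recursion sends a pre-reduced word $w=w_1\cdots w_n$ into a pair $(u,v)$ where the two coordinates share roughly half the original letters each. More precisely, every occurrence of $a$ in $w$ (except possibly one at the boundary) is flanked by letters from $\{b,c,d\}$, since $w$ is pre-reduced; a block $xay$ with $x,y\in\{b,c,d\}$ contributes under $\psi$ one letter to the $u$-coordinate and one to the $v$-coordinate, coming from the first coordinates of $\psi(x)$ and $\psi(y)$, while each isolated $a$ merely contributes the identity to both coordinates and toggles $s$. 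I would first make this bookkeeping precise: tabulate $\psi(b)=\pair<a,c>$, $\psi(c)=\pair<a,d>$, $\psi(d)=\pair<1,b>$, and record that the two first-coordinate letters produced by a block around an $a$ have norms that must sum to at most $\eta$ times the norm of the block.

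The second step is the arithmetic that forces the precise values of the norm. Writing $\|a\|=1-\eta^3$, $\|b\|=\eta^3$, $\|c\|=1-\eta^2$, $\|d\|=1-\eta$, one checks that $\psi$ takes a letter of $\{b,c,d\}$ to a pair whose norms are controlled: $b\mapsto(a,c)$ with $\|a\|+\|c\| = (1-\eta^3)+(1-\eta^2)$, and the identity $\eta^3+\eta^2+\eta=2$ (which says $\eta$ is the root of $X^3+X^2+X-2$) gives $\|a\|+\|c\| = 2-\eta^3-\eta^2 = \eta = \eta\cdot 1$, and $\|b\|=\eta^3$; similarly $\|a\|+\|d\|=2-\eta^3-\eta = \eta^2$ against $\|c\|=1-\eta^2$, and $\|b\|=\eta^3$ against $\|d\|=1-\eta$. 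In each case the ratio of the output norm to the relevant part of the input norm comes out to be at most $\eta$, precisely because of the defining cubic. I would organize this as: for a pre-reduced word $w$, group the letters into the $a$'s and the $\{b,c,d\}$-letters; the $\{b,c,d\}$-letters are the ones whose images contribute to $\|u\|+\|v\|$, and the above identities show each contributes at most $\eta$ times its own norm; then count how the intervening $a$'s are absorbed. The extra additive $C=\eta\|a\|$ absorbs the one possible unmatched $a$ at an end of $w$ (equivalently, the at most one ``half-block'' that does not get a partner).

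The main obstacle, and the place requiring genuine care, is the boundary/parity accounting: in a pre-reduced word the $a$'s and $\{b,c,d\}$-letters alternate up to the ends, so whether $w$ begins or ends with an $a$, and whether the total length is even or odd, changes exactly which letters land in $u$ versus $v$ and whether a lone $a$ is left over. I expect to handle this by reducing to the cleanest case — $w$ of the form $x_1 a x_2 a \cdots a x_m$ with all $x_i\in\{b,c,d\}$ — proving the bound there with room to spare, and then observing that adding or removing a leading/trailing $a$ changes $\|w\|$ by $\|a\|$, changes $\|u\|+\|v\|$ by $0$, and is therefore absorbed into the constant $C=\eta\|a\|$. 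Since this is exactly Proposition~4.2 of~\cite{bartholdi:upperbd} transcribed for words rather than group elements, and pre-reduction does not change the image in $\Grig$, the group-level statement there transfers verbatim; the only thing to verify is that no reduction was used in that argument beyond pre-reduction, which is the case.
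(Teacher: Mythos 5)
Your proposal follows the same route as the paper: the same three identities $\eta(\|a\|+\|b\|)=\|a\|+\|c\|$, $\eta(\|a\|+\|c\|)=\|a\|+\|d\|$, $\eta(\|a\|+\|d\|)=\|b\|$, the same idea of grouping $\{b,c,d\}$-letters with neighboring $a$'s, and the constant $C=\eta\|a\|$ compensating for a boundary term. The paper's version is slightly cleaner in that it pairs each letter of $\{b,c,d\}$ with the $a$ \emph{after} it (a genuine, possibly partial, matching) rather than using overlapping blocks $xay$, but the arithmetic is identical.

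Three small slips are worth correcting before you could turn the sketch into a proof. First, ``every occurrence of $a$ \dots is flanked by letters from $\{b,c,d\}$'' is false: pre-reduction only forbids consecutive letters from $\{b,c,d\}$, so $w$ may well contain $aa$ or begin/end with several $a$'s. Luckily this is harmless, since each extra $a$ raises $\eta\|w\|$ by $\eta\|a\|$ while leaving $\|u\|+\|v\|$ unchanged, so those cases only strengthen the inequality. Second, the per-block bookkeeping is misstated: a single letter $x\in\{b,c,d\}$ already contributes one letter to \emph{each} of $u,v$ (e.g.\ $b=\pair<a,c>$ or, after one $a$, $aba=\pair<c,a>$, contributes $\|a\|+\|c\|$ in total to $\|u\|+\|v\|$); a block $xay$ thus contributes up to two letters to each side, not ``one to $u$ and one to $v$ coming from the first coordinates of $\psi(x)$ and $\psi(y)$.'' Taken literally your description undercounts by half; your arithmetic identities are nonetheless stated for the correct per-letter contribution, so the numbers come out right. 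Third, in the clean case $w=x_1 a x_2 a\cdots a x_m$ the bound is an \emph{equality} $\|u\|+\|v\|=\eta\|w\|+\eta\|a\|$, not ``with room to spare''; accordingly $C$ is not absorbing a stray boundary $a$, but rather compensating for the final letter of $\{b,c,d\}$ having no $a$ after it to pair with. With these corrections your argument is exactly the one in the paper.
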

\begin{proof}
  Since $w$ is pre-reduced, say of length $n$, it contains at most
  $(n+1)/2$ letters in $\{b,c,d\}$. For each of these letters, we
  consider the corresponding letter(s) in $u$ and $v$ given by the
  wreath recursion. We have
  \begin{align*}
    \eta(\|a\|+\|b\|) &= \|a\|+\|c\|,\\
    \eta(\|a\|+\|c\|) &= \|a\|+\|d\|,\\
    \eta(\|a\|+\|d\|) &= 0+\|b\|.
  \end{align*}
  As $b=\pair<a,c>$ and $aba=\pair<c,a>$, each $b$ in $w$ contributes
  $\|a\|+\|c\|$ to the total weight of $u$ and $v$; the same argument
  applies to $c$ and $d$. Now, grouping together each letters in
  $\{b,c,d\}$ with the `$a$' after it (this is possible for all
  letters except possibly the last), we see that $\eta\|w\|$ is a sum
  of left-hand terms, possibly ${}-\eta\|a\|$; while $\|u\|+\|v\|$ is
  the sum of the corresponding right-hand terms.
\end{proof}

Let us write $\Delta(n)=\max\{\delta(w)\mid n\ge\|w\|\}$; this
function is equivalent to that defined in~\S\ref{ss:invorb}.
We state the following general lemma:
\begin{lemma}\label{lem:general}
  Let $\Delta:\N\to\N$ be a function. Let $\eta\in(0,1)$ and $C$ be such that, for
  all $n\in\N$, there exists $\ell,m\in\N$ with $\ell+m \le \eta n+C$
  and $\Delta(n) \le \Delta(\ell)+\Delta(m)$.  Set
  \[\alpha=\log(2)/\log(2/\eta).\]
  Then we have for all $n\in\N$
  \[\Delta(n)\precsim n^\alpha.\]
\end{lemma}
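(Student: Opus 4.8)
The plan is to prove the bound $\Delta(n)\precsim n^\alpha$ by induction on $n$, using the recursive inequality $\Delta(n)\le\Delta(\ell)+\Delta(m)$ with $\ell+m\le\eta n+C$ to reduce the size of the argument. The natural ansatz is to find constants $K>0$ and $n_0\in\N$ such that $\Delta(n)\le Kn^\alpha$ for all $n\ge n_0$; the exponent $\alpha=\log 2/\log(2/\eta)$ is exactly the value for which the worst-case split $\ell=m=\eta n/2$ is balanced, since then $\Delta(\ell)+\Delta(m)=2K(\eta n/2)^\alpha=K n^\alpha\cdot 2(\eta/2)^\alpha$, and $2(\eta/2)^\alpha=1$ precisely when $\alpha\log(2/\eta)=\log 2$.

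First I would record the elementary inequality that for any $\ell,m\ge0$ with $\ell+m\le t$ one has $\ell^\alpha+m^\alpha\le 2(t/2)^\alpha=2^{1-\alpha}t^\alpha$, since $x\mapsto x^\alpha$ is concave (as $\alpha<1$) and hence maximized on the simplex $\{\ell+m=t\}$ at the midpoint. Applying this with $t=\eta n+C$ gives, assuming the inductive hypothesis $\Delta(\ell)\le K\ell^\alpha$ and $\Delta(m)\le Km^\alpha$ (which is legitimate as long as $\ell,m<n$, and one should handle the degenerate possibility $\ell=n$ or $m=n$ separately — but $\ell+m\le\eta n+C<n$ for $n$ large, so both are genuinely smaller),
\[
\Delta(n)\le K(\ell^\alpha+m^\alpha)\le K\,2^{1-\alpha}(\eta n+C)^\alpha.
\]
It remains to check $2^{1-\alpha}(\eta n+C)^\alpha\le n^\alpha$, i.e. $2^{1-\alpha}\eta^\alpha(1+C/(\eta n))^\alpha\le 1$. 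Since $2^{1-\alpha}\eta^\alpha=2\cdot(\eta/2)^\alpha=1$ by the definition of $\alpha$, this reduces to $(1+C/(\eta n))^\alpha\le 1$, which is false for every finite $n$ — so the pure power ansatz needs a correction term.

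The standard fix is to strengthen the inductive hypothesis to $\Delta(n)\le K n^\alpha-L$ for suitable $K,L>0$, or equivalently to work with $\Delta(n)\le K(n+D)^\alpha$ for a shift constant $D$ absorbing $C$. With the shifted form, the induction step becomes $K((\ell+D)^\alpha+(m+D)^\alpha)\le K\,2^{1-\alpha}(\ell+m+2D)^\alpha\le K\,2^{1-\alpha}(\eta n+C+2D)^\alpha$, and one wants this $\le K(n+D)^\alpha$; using $2^{1-\alpha}\eta^\alpha=1$ this amounts to $\eta(n+D)\ge\eta n+C+2D$, i.e. $(\eta-2)D\ge C$ — wrong sign again since $\eta<2$. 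The genuinely correct bookkeeping is the additive one: posit $\Delta(n)\le K n^\alpha$ for $n\le n_0$ by choosing $K$ large, and for $n>n_0$ use that $\eta n+C\le\eta' n$ for some $\eta<\eta'<1$ once $n$ is large, so that after finitely many recursion steps all arguments drop below $n_0$; more precisely, unrolling the recursion $k$ times expresses $\Delta(n)$ as a sum of at most $2^k$ terms $\Delta(n_i)$ with $\sum n_i\le(\eta')^k n\cdot(\text{geometric factor})$, and by concavity $\sum \Delta(n_i)\le 2^k\Delta\big((\sum n_i)/2^k\big)$-type estimates, stopping when $(\eta'/2)^k\cdot 2^k$-weighted size is $O(1)$; since $2^k(\eta'/2)^{k\alpha}\to$ a controlled power of $n$, one recovers $\Delta(n)=O(n^\alpha)$. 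I expect the main obstacle to be exactly this: setting up the unrolling/bookkeeping so that the constant $C$ does not accumulate across the $\Theta(\log n)$ levels of recursion in a way that destroys the bound — the clean way is to prove $\Delta(n)\le K n^\alpha - C'$ for an auxiliary constant $C'$ depending on $C,\eta,\alpha$, verify the base case, and check that the concavity estimate $\ell^\alpha+m^\alpha\le 2^{1-\alpha}(\ell+m)^\alpha$ together with $2^{1-\alpha}\eta^\alpha=1$ makes the $-C'$ term self-sustaining; that algebraic verification is routine once the correct $C'$ is pinned down, and the rest is a straightforward strong induction.
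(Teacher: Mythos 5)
You correctly isolate the two engines of the argument — the exact identity $2(\eta/2)^\alpha=1$ that makes $\alpha$ the critical exponent, and the concavity bound $\ell^\alpha+m^\alpha\le 2^{1-\alpha}(\ell+m)^\alpha$ — and you correctly observe that the bare ansatz $\Delta(n)\le Kn^\alpha$ cannot absorb the additive constant $C$. But the repair you actually test, $\Delta(n)\le K(n+D)^\alpha$, has the shift in the \emph{wrong direction}, which is why you land on the impossible condition $(\eta-2)D\ge C$. The fix the paper uses is the opposite shift, $\Delta(n)\le L(n-K)^\alpha$ with $K=C/(2-\eta)>0$: then
\[
2L\Bigl(\tfrac12(\ell+m)-K\Bigr)^\alpha
\;\le\;
2L\Bigl(\tfrac{\eta}{2}\bigl(n+\tfrac{C}{\eta}\bigr)-K\Bigr)^\alpha
\;=\;
2L\Bigl(\tfrac{\eta}{2}(n-K)\Bigr)^\alpha
\;=\;
L(n-K)^\alpha,
\]
where the middle equality is exactly the identity $\tfrac{C}{2}-K=-\tfrac{\eta K}{2}$ forced by $K=C/(2-\eta)$, and the last uses $2(\eta/2)^\alpha=1$. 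So the correction term is subtracted \emph{inside} the power, not added, and its size is pinned down by the recursion itself; it is not a free parameter you can push to infinity.

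There is a second issue you flag but do not resolve: the function $L(n-K)^\alpha$ is meaningless or non-monotone for $n$ near $K$, so the induction needs a separate device for small $n$. Your proposed escape routes (the ``unrolling'' across $\Theta(\log n)$ levels, or the additive ansatz $Kn^\alpha-C'$ which you call routine) are left entirely at the level of a plan, and both have their own bookkeeping pitfalls — in particular, with $Kn^\alpha-C'$ the recursion can drop $\ell$ or $m$ below whatever threshold you set, so the base case and the inductive case interact and one must check the transition. The paper handles all of this in one stroke: it replaces $L(n-K)^\alpha$ by a concave, monotone majorant $\Delta^*$ of $\Delta$ (equal to $L(n-K)^\alpha$ for large $n$ and to a linear interpolation near $0$), and then the single inequality $\Delta^*(\ell)+\Delta^*(m)\le 2\Delta^*\bigl(\tfrac12(\ell+m)\bigr)$, valid by concavity for \emph{all} $\ell,m$, closes the induction uniformly. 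So the verdict is: your proposal identifies the right pieces but does not assemble a working proof — the concrete step that fails is the sign of the shift, and the missing idea is to subtract $K=C/(2-\eta)$ inside the power and package the small-$n$ regime into a concave auxiliary function rather than handling it by cases.
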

\begin{proof}
  Define $K=C/(2-\eta)$ and $M=C/(1-\eta)$. We will prove, in
  fact, $\Delta(n)\le L(n-K)^\alpha$ for some constant $L$ and all $n$
  large enough.

  For that purpose, let $L$ be large enough so that $\Delta(n)\le
  L(n-K)^\alpha$ for all $n\le M$. Set $N=K/(1-\alpha)$, define
  $\Delta^*$ by
  \begin{equation}\label{eq:Delta}
    \Delta^*(n)=\begin{cases} L(n-K)^\alpha & \text{ if }n\ge N,\\
      1+(L(N-K)^\alpha-1)n/N & \text{ if }n\le N,\end{cases}
  \end{equation}
  and note that $\Delta^*$ is the convex hull of $1$ and
  $L(n-K)^\alpha$; it is a monotone concave function satisfying
  $\Delta(n)\le\Delta^*(n)$ for all $n\le M$.
    
  Consider now $n>M$. We then have $\ell,m<n$. By induction, we have
  $\Delta(\ell)\le\Delta^*(\ell)$ and $\Delta(m)\le\Delta^*(m)$; so,
  using concavity of $\Delta^*$,
  \begin{align*}
    \Delta(n) &\le\Delta^*(\ell)+\Delta^*(m)\le 2\Delta^*\Big(\frac12(\ell+m)\Big)\le2\Delta^*\Big(\frac\eta2\big(n+\tfrac C\eta\big)\Big)\\
    &=2L\Big(\frac\eta2\big(n+\tfrac C\eta\big)-K\Big)^\alpha
    =2L\Big(\frac\eta2(n-K)\Big)^\alpha=L(n-K)^\alpha=\Delta^*(n).
  \end{align*}
  Therefore, $\Delta(n)\le\Delta^*(n)\sim(n-K)^\alpha$ for all $n\in\N$.
\end{proof}

\begin{proposition}\label{main}
  We have $\delta(w)\precsim\|w\|^\alpha$ for all $w\in
  \Omega$. Equivalently, $\Delta(n)\precsim n^\alpha$.
\end{proposition}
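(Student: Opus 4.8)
The plan is to reduce Proposition~\ref{main} to the recursive estimate of Lemma~\ref{lem:general} by extracting, from the contraction Lemma~\ref{lem:contract}, a recursion on the inverted orbit growth function $\Delta$ itself. Fix a word $w\in\Omega$ with $\|w\|=n$ that realizes $\Delta(n)$, i.e.\ $\delta(w)=\Delta(n)$. By the first lemma of this section we may assume $w$ is pre-reduced without changing $\delta(w)$, and we may assume $\|w\|$ has only increased (pre-reduction does not increase the norm since each replacement $bc\mapsto d$ etc.\ and each deletion $bb\mapsto\varepsilon$ only drops weight). Write the wreath recursion $w=\varepsilon^s\pair<u,v>$ with $u,v\in\Omega$; Lemma~\ref{lem:contract} gives $\|u\|+\|v\|\le\eta\|w\|+C$ with $C=\eta\|a\|$. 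So if we can show $\delta(w)\le\delta(u)+\delta(v)+O(1)$, then setting $\ell=\|u\|$, $m=\|v\|$ we get $\Delta(n)\le\Delta(\ell)+\Delta(m)+O(1)$ with $\ell+m\le\eta n+C$, and Lemma~\ref{lem:general} (after absorbing the additive constant into $C$, or by a trivial modification of its statement) yields $\Delta(n)\precsim n^\alpha$, hence $\delta(w)\precsim\|w\|^\alpha$ for all $w$.

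The heart of the argument is therefore the inequality $\delta(w)\le\delta(u)+\delta(v)+O(1)$, which I would prove by tracking how the inverted orbit of $w$ on the ray orbit $\rho\Grig$ decomposes under the first level of the tree. Write $w=w_1\cdots w_r$. Recall $\rho=\mathsf1^\infty$ and that $b,c,d$ fix $\rho$ while $a$ swaps the two subtrees. For each suffix $w_{i+1}\cdots w_r$, the point $\rho w_{i+1}\cdots w_r$ lies either in the subtree below $\mathsf 0$ or the subtree below $\mathsf 1$, according to the parity of the number of $a$'s among $w_{i+1},\dots,w_r$; write $\rho=\mathsf1 \rho$ and note that a point $\mathsf0\xi$ or $\mathsf1\xi$ in $\partial\mathcal T$ is determined by its first letter together with the image of $\rho$ (or $\mathsf0\rho$) under the appropriate section word. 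Concretely: the suffix $w_{i+1}\cdots w_r$ of $w$ maps, under the recursion, to a pair of section words $u^{(i)},v^{(i)}$ which are suffixes (up to pre-reduction bookkeeping) of $u,v$ respectively; the point $\rho w_{i+1}\cdots w_r$ is recovered from the first-level coordinate plus $\mathsf1\rho'$ where $\rho'$ is $\rho u^{(i)}\cdots$ or $\rho v^{(i)}\cdots$. Hence the number of distinct points $\rho w_{i+1}\cdots w_r$ is at most (number of distinct first-level coordinates, which is $\le 2$) times... no: more carefully, the points landing in the $\mathsf1$-subtree are in bijection with a set of points of the form $\mathsf1(\rho\, (\text{suffix of }u))$, of which there are at most $\delta(u)$, and likewise the points in the $\mathsf0$-subtree are governed by suffixes of $v$ (or of $u$, depending on the indexing convention forced by $\varepsilon^s$), contributing at most $\delta(v)$. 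Adding the at most two ``boundary'' points coming from the empty suffix and the parity switch gives $\delta(w)\le\delta(u)+\delta(v)+2$.

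The step I expect to be the main obstacle is making the bookkeeping in the previous paragraph precise: the sections $u,v$ of $w$ are defined for the whole word, but $\delta(u),\delta(v)$ concern \emph{all} suffixes of $u$ and of $v$, and one must check that every suffix of $w$ produces, at each of the two subtrees, a point that genuinely equals $\rho$ translated by a suffix of $u$ (resp.\ $v$), rather than by some more complicated word — i.e.\ that the operation ``take suffix'' commutes appropriately with ``take section''. This is where the self-similarity is used essentially, together with the fact that $b,c,d$ fix $\rho$, so that when the suffix begins in the middle of a syllable the discrepancy is bounded. I would handle it by writing $w=a^{e_0}t_1 a^{e_1}t_2\cdots$ with $t_j\in\{b,c,d\}$ (using pre-reducedness so the exponents of the intervening $a$-blocks are controlled), computing the section of each syllable explicitly from \eqref{eqgrig}, and verifying directly that the two half-orbits are suffix-orbits of $u$ and $v$ up to a bounded additive error. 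Once this combinatorial identity is in hand, combining it with Lemmas~\ref{lem:contract} and~\ref{lem:general} as above finishes the proof.
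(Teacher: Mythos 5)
Your overall strategy is exactly the paper's: combine Lemma~\ref{lem:contract} with Lemma~\ref{lem:general} via a recursion of the form $\delta(w)\le\delta(u)+\delta(v)$, obtained by sorting the points of $\mathcal O(w)$ into the $\mathsf0$- and $\mathsf1$-subtrees. However, you flag as ``the main obstacle'' a step that you then leave open (``I would handle it by\ldots''), namely that each suffix of $w$ produces a point of the form $\mathsf{j}(\rho\cdot(\text{suffix of }u\text{ or }v))$. This is indeed the crux, and you have not established it; so as written the argument has a gap.

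The good news is that the fact you are worried about is cleaner than you expect, and the precautions you build in are unnecessary. The wreath recursion is applied at the level of \emph{words}, and if $w=w_1\cdots w_n$ has $\psi(w_i)=\varepsilon^{e_i}\pair<p_i,q_i>$, then (writing in permutation-last form) the section $u$ is the concatenation of the $p_i$ or $q_i$ according to the parity of $e_1+\cdots+e_{i-1}$, and similarly for $v$. A suffix $w'=w_{j+1}\cdots w_n$ builds its sections by the same rule with the parity re-based at index $j$; since shifting all parities by the fixed bit $e_1+\cdots+e_j$ either preserves or globally swaps the two streams, the sections of $w'$ are \emph{literal} suffixes of $u$ and $v$ (possibly with the roles of $u$ and $v$ interchanged). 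No ``bounded additive error,'' no syllable decomposition, no control of intervening $a$-blocks is needed, and $u,v$ need not be pre-reduced for $\delta(u),\delta(v)$ to make sense: pre-reducedness of $w$ is used only in Lemma~\ref{lem:contract} to get the norm contraction. Consequently the recursion holds exactly as $\delta(w)\le\delta(u)+\delta(v)$ and you do not need to modify Lemma~\ref{lem:general}. If you fill in this one observation precisely, your proof matches the paper's.
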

\begin{proof}
  We show that, for some $C$ and all $n\in\N$, there exists
  $\ell,m\in\N$ with $\ell+m \le \eta n+C$ and $\Delta(n) \le
  \Delta(\ell)+\Delta(m)$; the claimed upper bound on $\Delta$ will
  then follow by Lemma~\ref{lem:general}.
 
  Consider a word $w=w_1\dots w_n$ realizing the maximum in $\Delta$,
  assumed without loss of generality to be pre-reduced. We will study
  the inverted orbit of $w$ on $X$. Write as above
  $w=\varepsilon^s\pair<u,v>$ with $u=u_1\dots u_\ell$ and $v=v_1\dots
  v_m$; then, as we will see, the inverted orbit of $w$ is made of
  rays $\mathsf0x$ for $x$ in the inverted orbit of $u$, and of rays
  $\mathsf1x$ for $x$ in the inverted orbit of $v$. By
  Lemma~\ref{lem:contract}, we have $\ell+m\le\eta(n+\|a\|)$. A suffix
  $w'=w_{i+1}\dots w_n$ has the form
  $w'=\varepsilon^{s'}\pair<u',v'>$, in which $u',v'$ are respectively
  suffixes of $u,v$. We have $\rho w'=\mathsf1\rho v'$ if $s'=0$, and
  $\rho w'=\mathsf0\rho u'$ if $s'=1$. Therefore,
  \begin{align*}
    \Delta(n) &= \#\{\rho w_{i+1}\dots w_n\mid i=0,\dots,n\}\\
    &\le \#\{\mathsf0(\rho u_{j+1}\dots u_\ell)\mid j=0,\dots,\ell\}+\#\{\mathsf1(\rho v_j\dots v_m)\mid j=0,\dots,m\}\\
    &\le \Delta(\ell)+\Delta(m).\qedhere
  \end{align*}
\end{proof}

We now explore the range of $\delta(w)$, for various words
$w\in\{a,b,c,d\}^n$. Let us insist that different words $u,v\in
\Omega$ which have the same value in $\Grig$ may very well have widely
different $\delta$-values. The following result is not used in this
text, but is included to stress the difference between direct and
inverted orbit growth:
\begin{remark}\label{rem:invdir}
  \textit There exists for all $n$ a word $w\in \Omega$ of length
  $2^n$, whose direct orbit on $X$ has length $2^n$, and such that
  $\delta(w)\sim n$. In particular, $w$ has length $n$ in $\Grig$.
\end{remark}
\begin{proof}
  Consider again the substitution $\sigma:\Omega\to \Omega$ given
  by~\eqref{eq:sigma}, and the word $w_n=\sigma^{n-1}(ad)$. Let $v$ be
  a suffix of $w_n$.

  Note that $w_n=\pair<u,w_{n-1}>$ for some word $u$ over $\{a,d\}$;
  therefore, the suffix $v$ has the form $\varepsilon^s\pair<u',v'>$
  for some $s\in\{0,1\}$ and suffixes $u',v'$ of $u,w_{n-1}$
  respectively. Now either $i=1$, so $\rho v=\mathsf0(\rho
  u')\in\{\mathsf0\rho,\mathsf{00}\rho\}$, or $i=0$, and $\rho
  v=\mathsf1(\rho v')$; by induction, $\rho v$ can take at most $2n$
  values when $v$ ranges over all suffixes of $w_n$.

  On the other hand, $\rho w_n=\mathsf0^{n+1}\rho$ is at distance
  $2^n-1$ from $\rho$, so the direct orbit of $w_n$ traverses $2^n$
  points.
\end{proof}

\begin{remark}
  For comparison, let $\delta'(w)$ denote the size of the direct orbit
  of $w$; namely, $\delta'(w)=\#\{\rho,\rho w_1,\rho w_1w_2,\dots,\rho
  w_1\dots w_n\}$ if $w=w_1\dots w_n$. Then, for
  $w=\pair<u,v>\varepsilon^s$, we have the inequality
  $\delta'(w)\le2\delta'(v)$, from which nothing can be deduced,
  instead of $\delta(w)\le\delta(u)+\delta(v)$. As soon as $G$ is
  infinite, there is for all $n\in\N$ a word of length $n$ whose
  direct orbit visits $n$ points.
\end{remark}

\noindent We now show that the estimate in Proposition~\ref{main} is optimal:
\begin{proposition}\label{substitution}
  There exists a constant $C$ such that, for all $n\in\N$, there
  exists a word $w_n\in \Omega$ of length at most $C(2/\eta)^n$ with
  $\delta(w)\ge 2^n$.
\end{proposition}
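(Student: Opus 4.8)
The plan is to run the contraction of Lemma~\ref{lem:contract} in reverse. We build words $w_n$ recursively so that the wreath decomposition $w_n=\varepsilon^{s_n}\pair<u_n,v_n>$ has $\delta(w_n)=\delta(u_n)+\delta(v_n)$, with the inverted orbit doubling at each step while the norm is multiplied by at most $2/\eta$ up to a bounded additive error. Since $2^n=\big((2/\eta)^n\big)^\alpha$, a word of length $O\big((2/\eta)^n\big)$ with $\delta\geq 2^n$ gives $\Delta(n)\succsim n^\alpha$; together with Proposition~\ref{main} this yields $\Delta(n)\sim n^\alpha$, so the present statement is exactly the assertion that the bound of Proposition~\ref{main} is sharp.

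The technical heart is a \emph{lifting} of words: an operation $w\mapsto\widehat w$ on $\Omega$ producing $\widehat w=\varepsilon^{s}\pair<u,v>$ with $\delta(u),\delta(v)\geq\delta(w)$, with $\|\widehat w\|\leq\tfrac1\eta(\|u\|+\|v\|)+C\leq\tfrac2\eta\|w\|+C'$, and such that $\mathcal O(\widehat w)$ is the disjoint union $\mathsf0\,\mathcal O(u)\sqcup\mathsf1\,\mathcal O(v)$. The norm estimate is the routine part: the weight identities behind Lemma~\ref{lem:contract}, namely $\eta(\|a\|+\|b\|)=\|a\|+\|c\|$ and its cyclic analogues, are \emph{equalities}, so when the lift is assembled out of ``efficient'' blocks (each occurrence of a letter in $\{b,c,d\}$ immediately followed by an $a$) the inequality of Lemma~\ref{lem:contract} is tight up to the bounded term, and inverting it costs only the factor $1/\eta$ and a constant. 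Granting such a lift, one starts from a short seed (say $w_1=a$, for which $\delta(a)=2$), sets $w_{n+1}=\widehat{w_n}$, gets $\delta(w_n)\geq 2^n$ by induction and $\|w_n\|\leq\tfrac2\eta\|w_{n-1}\|+C'$, which telescopes to $\|w_n\|=O\big((2/\eta)^n\big)$; as $\|\cdot\|$ and $|\cdot|$ are equivalent, $|w_n|\leq C(2/\eta)^n$, as required.

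The main obstacle is building $\widehat{\cdot}$ and establishing the \emph{lower} bound $\delta(\widehat w)\geq 2\delta(w)$, i.e.\ that no collisions occur. The pair $(u,v)$ cannot be chosen freely: the word-level recursion $\Omega\to\mathfrak S_2\times\Omega\times\Omega$ is far from surjective, and one cannot in general even take $u=v$ --- for instance $\pair<d,d>$ is not in the image of $\psi$ --- so $u$ and $v$ must be generated by a \emph{simultaneous} recursion that keeps the pair realizable while both coordinates retain a long inverted orbit. One must also control the parities of the suffixes of $\widehat w$: reading $\widehat w$ from the right, the parity of a suffix toggles at each letter $a$, and the $a$'s must be positioned so that in one parity class the suffixes realize, through their left coordinate, every point of $\mathcal O(u)$, in the other class every point of $\mathcal O(v)$ through the right coordinate, and so that the total parity of $\widehat w$ places the two families under the disjoint prefixes $\mathsf0$ and $\mathsf1$. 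This rules out collisions between the two halves; collisions within a half are excluded by induction, but only if the induction carries more than the bare count: one should maintain an explicit description of the $2^n$ rays of $\mathcal O(w_n)$ (or at least that they stay distinct and ``spread out'' enough to survive the prefixing performed by the next lift). This combinatorial bookkeeping, rather than the norm computation, is where the difficulty lies.
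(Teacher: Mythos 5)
Your plan correctly identifies the strategy that the paper uses---reverse the contraction, find a word whose wreath decomposition splits the inverted orbit into two disjoint halves, and iterate---and it correctly isolates the real difficulty: constructing the lift $w\mapsto\widehat w$ so that the two coordinates $u,v$ are simultaneously realizable \emph{and} both retain long inverted orbits with no collisions. But the proposal stops at exactly that point; it describes the properties the lift must have, observes candidly that ``this combinatorial bookkeeping \dots is where the difficulty lies,'' and never actually produces the lift. As written, this is an outline of a proof strategy, not a proof.

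The piece you are missing is the paper's explicit substitution $\zeta$ on $\Omega'=\{ab,ac,ad\}^*$,
\[\zeta:ab\mapsto abadac,\quad ac\mapsto abab,\quad ad\mapsto acac,\]
with $w_n=\zeta^n(ad)$. This resolves the ``simultaneous recursion'' obstacle you flag in a clean way: one checks that for any $av\in\Omega'$, the wreath recursion gives (up to pre-reduction) either $\zeta(av)\bumpeq\varepsilon\pair<ava,v>$ or $\zeta(av)\bumpeq\pair<va,av>$ depending on the parity of the number of $a$'s in $\zeta(av)$, so the two coordinates are never chosen independently---they are always $w_{n-1}$ and its $a$-conjugate $w_{n-1}'=a^{-1}w_{n-1}a$. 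Disjointness of $\mathsf0\,\mathcal O(w_{n-1}')$ and $\mathsf1\,\mathcal O(w_{n-1})$ is then automatic from the distinct prefixes, and collisions within a half are handled by tracking \emph{two} quantities in the induction, $\delta(w_n)\ge\delta(w_{n-1})+\delta(w_{n-1}')$ and $\delta(w_n')\ge 2\delta(w_{n-1}')$, starting from $\delta(w_0)=2$, $\delta(w_0')=1$. (Note also your proposed seed $w_1=a$ is not in $\Omega'$ and so cannot serve as a base for any such substitutional construction.) The length bound $|w_n|\le C(2/\eta)^n$ then comes not from inverting the norm inequality but from the characteristic polynomial $X^3-X^2-2X-4$ of the incidence matrix of $\zeta$, whose unique real root is $2/\eta$. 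So: right intuition, correct identification of the hard step, but the hard step is left unfinished.
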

\begin{proof}
  Write $\Omega'=\{ab,ac,ad\}^*\subset \Omega$, consider the substitution
  $\zeta:\Omega'\to \Omega'$ given by
  \[\zeta:ab\mapsto abadac,\quad ac\mapsto abab,\quad ad\mapsto acac,\]
  and consider the word $w_n=\zeta^n(ad)$. For example, we have
  $w_0=ad$, $w_1=acac$, $w_2=abababab$,
  $w_3=(abadac)^4$, \dots
  
  Counting the number of occurrences of $ab,ac,ad$ in the word $w_n$,
  we get
  \[|w_n|=\begin{pmatrix}2 & 2 & 2\end{pmatrix}\begin{pmatrix}1 &
    2 & 0\\ 1 & 0 & 2\\ 1 & 0 & 0\end{pmatrix}^n\begin{pmatrix}0\\ 0\\
    1\end{pmatrix};\] the characteristic polynomial of the $3\times3$
  matrix is $X^3-X^2-2X-4$. This polynomial has a positive real root
  $2/\eta$, and two conjugate complex roots of a smaller absolute
  value. Therefore, there exists a constant $C$ such that the length
  of $w_n$ is at most $C(2/\eta)^n$ for all $n$. (In fact, it is also
  $\ge C'(2/\eta)^n$ for another constant $C'$, because the polynomial
  is irreducible over $\mathbb Q$).

  Consider $w=\varepsilon^s\pair<u,v>$ and
  $w'=\varepsilon^{s'}\pair<u',v'>$. We write $w\bumpeq w'$ to mean
  that $s=s'$, that $u,u'$ have the same pre-reduction, and that
  $v,v'$ have the same pre-reduction.  Under the wreath recursion, we
  have
  \[\zeta(ab)=\varepsilon\pair<aba,c1d>\bumpeq\varepsilon\pair<aba,b>,\quad
  \zeta(ac)=\pair<ca,ac>,\quad \zeta(ad)=\pair<da,ad>.
  \]
  It follows that, for any word $av\in \Omega'$, we have
  \[\zeta(av)\bumpeq\begin{cases}\varepsilon\pair<ava,v> & \text{ if $\zeta(av)$ contains an odd number of $a$'s,}\\
    \pair<va,av> & \text{ if $\zeta(av)$ contains an even number of $a$'s.}
  \end{cases}
  \]
  In particular, let us denote by $w_{n-1}':=a^{-1}w_{n-1}a$ the word
  obtained from $w_{n-1}a$ by deleting its initial $a$; then
  $w_n\bumpeq\pair<w_{n-1}',w_{n-1}>$.

  Let now $u$ be a suffix of $w_{n-1}$. There exists then a suffix $v$
  of $w_n$ such that $v\bumpeq\pair<*,u>$. Similarly, let $u'$ be a
  suffix of $w_{n-1}'$. There exists then a suffix $v'$ of $w_n$ such
  that $v'\bumpeq\varepsilon\pair<u',*>$. Now $\rho v=\mathsf1\rho u$,
  and $\rho v'=\mathsf0\rho u'$; so $\mathcal
  O(w_n)\supseteq\mathsf1\mathcal O(w_{n-1})\sqcup\mathsf0\mathcal
  O(w_{n-1}')$, and
  \[\delta(w_n)\ge\delta(w_{n-1})+\delta(w_{n-1}').\]
  A similar reasoning shows $\delta(w_n')\ge2\delta(w_{n-1}')$. Since
  $\delta(w_0)=2$ and $\delta(w_0')=1$, we get
  \[\delta(w_n)\ge2^n+1,\qquad\delta(w_n')\ge2^n.\]
  (These inequalities are in fact equalities).
\end{proof}

\begin{corollary}
  There exist constants $C_1,C_2>0$ such that, for any $\ell\in\N$,
  \begin{enumerate}
  \item for all words $w$ of length $\ell$, we have $\delta(w)\le C_1\ell^\alpha$;
  \item there exists a word $w$ of length $\ell$ with $\delta(w)\ge C_2\ell^\alpha$.
  \end{enumerate}
\end{corollary}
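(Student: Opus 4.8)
The plan is to obtain both inequalities directly from Propositions~\ref{main} and~\ref{substitution}, using only the already-noted equivalence of the norm $\|\cdot\|$ and the word length $|\cdot|$ on $\Omega$; write $M=\max\{\|a\|,\|b\|,\|c\|,\|d\|\}$, so that $\|w\|\le M|w|$ for every $w\in\Omega$.

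For part~(1), I would start from Proposition~\ref{main}, which furnishes a constant $C$ with $\Delta(n)\le Cn^\alpha$ for all $n\ge1$ (the finitely many small values of $n$ being absorbed into $C$ via the trivial bound $\Delta(n)\le n+1$; recall $\Delta$ is nondecreasing). If $|w|=\ell\ge1$, then $\|w\|\le M\ell$, so $\delta(w)\le\Delta(\lceil M\ell\rceil)\le C\lceil M\ell\rceil^\alpha\le C_1\ell^\alpha$ for a suitable $C_1$ (e.g.\ $C_1=C(M+1)^\alpha$); the case $\ell=0$ is trivial.

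For part~(2), the first observation is that prepending letters never decreases the inverted orbit growth: if $w'=uw$ with $u\in\Omega$, then every suffix of $w$ is a suffix of $w'$, so $\mathcal O(w)\subseteq\mathcal O(w')$ and $\delta(w')\ge\delta(w)$. Let $w_n$ be the words produced by Proposition~\ref{substitution}. Given $\ell\ge|w_0|=2$, let $n$ be the largest index with $|w_n|\le\ell$, and set $w=a^{\ell-|w_n|}w_n\in\Omega$; this word has length exactly $\ell$, and $\delta(w)\ge\delta(w_n)\ge2^n$ by the observation. To finish I would bound $2^n$ from below: maximality of $n$ gives $|w_{n+1}|>\ell$, while Proposition~\ref{substitution} gives $|w_{n+1}|\le C(2/\eta)^{n+1}$, so $(2/\eta)^n>\eta\ell/(2C)$; and since $\alpha=\log(2)/\log(2/\eta)$ we have $(2/\eta)^\alpha=2$, hence
\[\delta(w)\ge2^n=\bigl((2/\eta)^n\bigr)^\alpha>\bigl(\eta/(2C)\bigr)^\alpha\ell^\alpha.\]
Setting $C_2=(\eta/(2C))^\alpha$, shrunk if necessary to also cover $\ell\in\{0,1\}$, completes part~(2).

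I do not expect a genuine obstacle here: each half is essentially a translation of one of the two preceding propositions into the word-length metric. The only points requiring a little care are the bookkeeping between the $\|\cdot\|$-based function $\Delta$ and the word length in part~(1), and the choice in part~(2) of a padding that provably does not shrink the inverted orbit — which works precisely because $\mathcal O(w)$ depends only on the suffixes of $w$.
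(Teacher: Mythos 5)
Your proof is correct and matches the route the paper clearly intends: the corollary is stated without proof immediately after Propositions~\ref{main} and~\ref{substitution}, and it is exactly the translation of those two statements from the norm $\|\cdot\|$ to the word length that you carry out, with the padding-by-a-prefix trick (valid because $\mathcal O(w)$ depends only on suffixes of $w$) to hit every length $\ell$ exactly. The only cosmetic quibble is that part~(1) at $\ell=0$ actually fails (the empty word has $\delta=1>C_1\cdot 0^\alpha$), so one should read the statement for $\ell\ge1$ rather than calling that case trivial; this does not affect the substance.
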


We will also need to control the total number of possibilities
$\Sigma(n)$ for the inverted orbit of a word of length $\le n$. Since
the inverted orbit has cardinality at most $\sim n^\alpha$, and lies
in the ball of radius $n$ in the Schreier graph $\rho G$, which
contains at most $2n+1$ points (see Figure~\ref{fig:schreier}), we
have $\Sigma(n)\precsim\binom{2n+1}{n^\alpha}$; but this estimate is
too crude for our purposes. We improve it as follows:
\begin{lemma}\label{support}
  Set $\Sigma(n)=\#\{\mathcal O(w)\mid n\ge|w|\}$. Then
  $\Sigma(n)\precsim\exp(n^\alpha)$.
\end{lemma}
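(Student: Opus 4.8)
The plan is to mimic the recursive structure used in Proposition~\ref{main}, but now tracking whole inverted orbits rather than just their cardinality. Recall that a pre-reduced word $w=\varepsilon^s\pair<u,v>$ has inverted orbit $\mathcal O(w)\subseteq\mathsf0\mathcal O(u)\cup\mathsf1\mathcal O(v)$, where $u,v$ are (up to pre-reduction) the sections of $w$, and by Lemma~\ref{lem:contract} the lengths satisfy $\ell+m\le\eta n+C$ with $n=\|w\|$. So every inverted orbit of a word of norm $\le n$ is determined by the triple $(s,\mathcal O(u),\mathcal O(v))$, i.e. by a choice of $s\in\{0,1\}$ together with an inverted orbit for each section. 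Since pre-reduction does not change $\delta$ and one may assume $w$ pre-reduced, this yields a recursive bound
\[
\Sigma(n)\le 2\sum_{\ell+m\le\eta n+C}\Sigma(\ell)\,\Sigma(m).
\]

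The key step is then a purely analytic lemma, entirely parallel to Lemma~\ref{lem:general}: if a function $\Sigma:\N\to\N$ satisfies $\Sigma(n)\le 2\sum_{\ell+m\le\eta n+C}\Sigma(\ell)\Sigma(m)$ (with $\Sigma$ nondecreasing, which it visibly is), then $\Sigma(n)\precsim\exp(n^\alpha)$ for $\alpha=\log2/\log(2/\eta)$. To prove this I would look for a bound of the shape $\Sigma(n)\le\exp(L(n-K)^\alpha)$ for suitable constants $K,L$, chosen to absorb the additive constant $C$ exactly as in the proof of Lemma~\ref{lem:general} (take $K=C/(2-\eta)$ or similar). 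The inductive step becomes: the sum over $\ell+m\le\eta n+C$ has at most $O(n^2)$ terms, each at most $\exp(L(\ell-K)^\alpha)\exp(L(m-K)^\alpha)$; since $x\mapsto x^\alpha$ is concave and subadditive in the relevant range, $(\ell-K)^\alpha+(m-K)^\alpha$ is maximized, under the constraint, at $\ell=m=\tfrac12(\eta n+C)$, giving an exponent $\le 2L(\tfrac\eta2(n-K))^\alpha=L(n-K)^\alpha$ by the defining identity $2(\eta/2)^\alpha=1$. The leftover polynomial factor $2\cdot O(n^2)$ in front is swallowed because $\exp(L(n-K)^\alpha)/\exp(L((n-\text{const})-K)^\alpha)$ grows like $\exp(\Theta(n^{\alpha-1}))$, which beats any polynomial; so one fixes a slightly smaller effective constant, or simply notes $\Sigma(n)\precsim\exp(n^\alpha)$ absorbs polynomial slack into the $\precsim$.

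The main obstacle is making the bookkeeping of the recursion honest: a word realizing a given inverted orbit need not be pre-reduced, and after pre-reduction its two sections $u,v$ are themselves only defined up to pre-reduction, so one must check that distinct inverted orbits $\mathcal O(w)$ really are counted by distinct triples $(s,\mathcal O(u),\mathcal O(v))$ — which follows because $\rho w'=\mathsf0\rho u'$ or $\mathsf1\rho v'$ according to the sign $s'$ of the suffix $w'$, and the sign of a suffix is read off from the parity of the number of $a$'s in it, so $\mathcal O(w)$ determines which of its points lie above $\mathsf0$ and which above $\mathsf1$, hence determines $\mathcal O(u)$ and $\mathcal O(v)$. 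Once that is in place, and once one notes that a word of norm $\le n$ has length $\le\|a\|^{-1}n$ so that switching between $|\cdot|$ and $\|\cdot\|$ only rescales $n$, the analytic lemma finishes the argument exactly as in Lemma~\ref{lem:general}.
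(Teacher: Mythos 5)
Your overall strategy is the same as the paper's: decompose a pre-reduced $w$ as $\varepsilon^s\pair<u,v>$, observe that $\mathcal O(w)$ lives inside $\mathsf0\mathcal O(u)\cup\mathsf1\mathcal O(v)$ with $\|u\|+\|v\|\le\eta\|w\|+C$, derive the recursion $\Sigma(n)\le\text{(const)}\sum_{\ell+m\le\eta n+C}\Sigma(\ell)\Sigma(m)$, and then bootstrap analytically. The paper does exactly this.

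The gap is in your analytic step, and it is a genuine one. You propose the Ansatz $\Sigma(n)\le\exp\bigl(L(n-K)^\alpha\bigr)$, run the induction, and obtain an extra factor of size roughly $O(n^2)$ (the number of pairs $(\ell,m)$). You then claim this polynomial factor is ``swallowed'' because $\exp\bigl(L(n-K)^\alpha\bigr)/\exp\bigl(L(n-\mathrm{const}-K)^\alpha\bigr)$ grows like $\exp(\Theta(n^{\alpha-1}))$ and hence beats any polynomial. This is backwards: since $\alpha<1$, we have $n^{\alpha-1}\to0$, so that ratio tends to $1$, not to infinity; shifting $K$ by a constant gains you $\exp(O(n^{\alpha-1}))=1+o(1)$, which is not enough to cancel an $n^2$. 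The ``absorb the slack into $\precsim$'' alternative also fails, because the factor appears at every level of the recursion and compounds. The paper circumvents this by building the correction directly into the Ansatz, taking $\Sigma(n)\le\exp(\Delta^*(n))\cdot\eta/(4n)$ with $\Delta^*$ as in Lemma~\ref{lem:general}; the $1/n$ makes $\exp(\Delta^*(n))/n$ log-concave, and then summing over $\ell+m\le N$ produces exactly the $1/n$ one needs on the left-hand side, with no leftover polynomial. Some such refinement of the Ansatz is unavoidable here; a bare $\exp(L(n-K)^\alpha)$ does not close the induction.

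A secondary point: your claim that ``$\mathcal O(w)$ determines which of its points lie above $\mathsf0$ and which above $\mathsf1$, hence determines $\mathcal O(u)$ and $\mathcal O(v)$'' is not quite correct as stated. Each suffix $u'$ of $u$ is picked up as $\mathsf0\rho u'$ only when the corresponding suffix of $w$ has sign $s'=1$, and at the two ends of $w$ a suffix of $u$ (or of $v$) may occur with only one sign. For instance, with $w=ba=\varepsilon\pair<c,a>$ one has $\mathcal O(w)=\{\rho,\mathsf0\rho\}$ but $\mathsf1\mathcal O(a)=\{\rho,\mathsf{10}\rho\}$, so $\mathcal O(w)\cap\mathsf1X\subsetneq\mathsf1\mathcal O(v)$. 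This means $\mathcal O(w)$ need not determine $\mathcal O(u),\mathcal O(v)$ exactly; the discrepancy is bounded (it only affects the extremal suffixes), so it can be patched at the cost of a further polynomial factor, but it is worth noting that the clean equality $\mathcal O(w)=\mathsf0\mathcal O(u)\sqcup\mathsf1\mathcal O(v)$ on which the counting rests (and which the paper itself asserts rather tersely) requires justification.
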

\begin{proof}
  Recalling that the norms $\|\cdot\|$ and $|\cdot|$ are equivalent,
  we consider $w$ with $\|w\|\le n$, assumed without loss of
  generality to be pre-reduced (because the inverted orbit is invariant
  under pre-reduction). We write $w=\varepsilon^s\pair<u,v>$ with
  $\|u\|=\ell,\|v\|=m$, and recall that a suffix $w'$ of $w$ has the
  form $w'=\varepsilon^{s'}\pair<u',v'>$ for suffixes $u',v'$ of $u,v$
  respectively. As in Proposition~\ref{main}, we then have $\mathcal
  O(w)=\mathcal O(u)\sqcup\mathcal O(v)\varepsilon$; we get
  \[\Sigma(n)\le\sum_{\ell+m\le\eta(n+\|a\|)}\Sigma(\ell)\Sigma(m).\]
  We reuse the notation of Proposition~\ref{main}, and take as Ansatz
  \begin{equation}\label{eq:Ansatz}
    \Sigma(n)\le\exp(\Delta^*(n))\eta/4n,
  \end{equation}
  for a function $\Delta^*$ as in~\eqref{eq:Delta}, with a large
  enough constant $L$ that~\eqref{eq:Ansatz} holds whenever $n\le
  M$. Then, because $\exp(\Delta^*(n))/n$ is log-concave, we get as
  before
  \[\Sigma(n)\le\eta n(\exp(K(n\eta/2)^\alpha)/(2n))^2=\exp(Kn^\alpha)\eta/4n\]
  and~\eqref{eq:Ansatz} holds by induction for all $n\in\N$.
\end{proof}

%%%%%%%%%%%%%%%%%%%%%%%%%%%%%%%%%%%%%%%%%%%%%%%%%%%%%%%%%%%%%%%% 
\section{Groups of intermediate growth}
Recall that $\Grig$ denotes the first Grigorchuk group, and that $X$
denotes the orbit of $\rho=\mathsf1^\infty$ under the right
$\Grig$-action on the (boundary of the) binary tree $\mathcal
T=\{\mathsf0,\mathsf1\}^*$.

It is convenient, when considering groups of intermediate growth, to
write their growth function in the form $\exp(n/\phi(n))$, for an
unbounded function $\phi$.

\begin{lemma}\label{lem:interm}
  Let $A$ be a non-trivial group having growth $\sim\exp(n/\phi(n))$,
  and assume that $n/\phi(n)$ is concave. Consider the wreath product
  $W=A\wr_X\Grig$. Then the growth of $W$ is $\sim
  \exp(n/\phi(n^{1-\alpha}))$.
\end{lemma}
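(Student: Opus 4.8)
The plan is to prove the two estimates $v_W\precsim\exp(n/\phi(n^{1-\alpha}))$ and $v_W\succsim\exp(n/\phi(n^{1-\alpha}))$ separately, building on the description of $W=(\sum_XA)\rtimes\Grig$ from Section~2. Set $h(t)=t/\phi(t)$, so that $v_A\sim\exp\circ h$ and $n/\phi(n^{1-\alpha})=n^\alpha h(n^{1-\alpha})$; being concave and nonnegative, $h$ is nondecreasing and satisfies $h(\lambda t)\ge\lambda h(t)$ for $\lambda\in[0,1]$, and if $A$ is infinite then $v_A(t)\ge t+1$ forces $h(t)\succsim\log t$. Recall from Section~2 that a word of length $\le n$ over the standard generators $S=S_A\sqcup S_G$ evaluates to a pair $fg$ in which $g\in\Grig$ is the value of the subword of $S_G$-letters (hence lies in $B_\Grig(n)$), the support of $f$ is contained in the inverted orbit $\mathcal O$ of that subword, and $\sum_{x}|f(x)|_{S_A}$ is at most the number of $S_A$-letters, hence $\le n$.

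For the upper bound, $fg\in B_W(n)$ is determined by the triple $(g,\mathcal O,f)$: at most $v_\Grig(n)\precsim\exp(n^\alpha)$ choices for $g$ by~\eqref{eq:grigupper}, at most $\Sigma(n)\precsim\exp(n^\alpha)$ choices for $\mathcal O$ by Lemma~\ref{support}, with moreover $|\mathcal O|\le\Delta(n)\le Cn^\alpha$ by Proposition~\ref{main}. If $A$ is finite of order $a$, a fixed $\mathcal O$ carries at most $a^{|\mathcal O|}=\exp(O(n^\alpha))$ functions, and multiplying out gives $v_W(n)\precsim\exp(O(n^\alpha))$, which is the claim up to rescaling. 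If $A$ is infinite (so $h\succsim\log$), the number of $f$ supported on a set of size $k\le Cn^\alpha$ with $\sum_x|f(x)|_{S_A}\le n$ is at most the number of composition tuples $(n_x)$ with $\sum n_x\le n$, namely $\binom{n+k}{k}\le\exp(O(n^\alpha\log n))$, times $\max\prod_x v_A(n_x)$ over such tuples. Writing $v_A(t)\le\exp(h(C_At))$ and applying Jensen's inequality to the concave function $h$ yields $\sum_x h(C_An_x)\le k\,h(C_An/k)$; since $k\mapsto k\,h(c/k)$ is nondecreasing this is at most $Cn^\alpha h(C_AC^{-1}n^{1-\alpha})$. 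Collecting, $v_W(n)\precsim\exp\big(O(n^\alpha\log n)+Cn^\alpha h(C_AC^{-1}n^{1-\alpha})\big)$; the first summand is $\precsim\exp(n^\alpha h(n^{1-\alpha}))$ because $h\succsim\log$, and the second becomes $\precsim\exp(n^\alpha h(n^{1-\alpha}))$ after replacing $n$ by a large multiple, using that $h$ is nondecreasing with $h(\lambda t)\ge\lambda h(t)$ to push the constant inside $h$. Hence $v_W\precsim\exp(n/\phi(n^{1-\alpha}))$.

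For the lower bound, fix $n$ large. By Proposition~\ref{substitution} there is a constant $C_1$ and a word $u=s_1\cdots s_\ell\in\Omega$ with $\delta(u)\ge K:=\lfloor(n/(2C_1))^\alpha\rfloor$ and $\ell\le n/2$. Since $\{\rho s_{i+1}\cdots s_\ell:i=0,\dots,\ell\}=\mathcal O(u)$ has at least $K$ distinct elements, choose $i_1<\dots<i_K$ with the points $y_j:=\rho s_{i_j+1}\cdots s_\ell$ pairwise distinct. Given any $b_1,\dots,b_K\in A$, each of word length $\le L:=\lfloor n/(2K)\rfloor$ (note $L\asymp n^{1-\alpha}$), form the word obtained from $u$ by inserting, just after $s_{i_j}$, a fixed length-$\le L$ word over $S_A$ representing $b_j$; by the multiplication formula of Section~2 its value is a pair $f\,g$ with $g$ the value of $u$ and with $f$ concentrated on the $y_j$, where $f(y_j)=b_j$. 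This has length $\le\ell+KL\le n$, and distinct tuples $(b_j)$ (the $y_j$ being distinct) yield distinct $f$, hence distinct elements of $B_W(n)$. Therefore $v_W(n)\ge v_A(L)^K\ge\exp\big(K\,h(L/C_A)\big)\succsim\exp(n^\alpha h(n^{1-\alpha}))=\exp(n/\phi(n^{1-\alpha}))$, using $v_A(L)\ge\exp(h(L/C_A))$, $K\asymp n^\alpha$, $L\asymp n^{1-\alpha}$ and the monotonicity/scaling of $h$ once more. The two bounds together give the lemma.

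The only real difficulty I anticipate is this last matching of estimates to the class $\exp(n/\phi(n^{1-\alpha}))$: the relation $v_A\sim\exp\circ h$ provides only $v_A(t)\le\exp(h(C_At))$ and $v_A(t)\ge\exp(h(t/C_A))$ — not $\exp(O(h(t)))$, which would be false for, e.g., $h(t)=\log^2 t$ — so the constants must be transported inside $h$ and absorbed using concavity, which is exactly where the hypothesis that $n/\phi(n)$ is concave is used. Relatedly, the ``subdominant'' factors $v_\Grig(n)$, $\Sigma(n)$ and $\binom{n+|\mathcal O|}{|\mathcal O|}$ are each of size $\exp(n^\alpha\,\mathrm{polylog}\,n)$, negligible against $\exp(n^\alpha h(n^{1-\alpha}))$ precisely because $h(n^{1-\alpha})\succsim\log n$ when $A$ is infinite; the case of finite $A$, where the target collapses to $\exp(n^\alpha)$, must be disposed of separately, but is immediate.
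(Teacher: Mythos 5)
Your proof is correct and follows essentially the same route as the paper's: you decompose an element of $W$ into the data (value in $\Grig$, support contained in an inverted orbit, values in $A$), invoke Proposition~\ref{main}, Lemma~\ref{support} and the bound~\eqref{eq:grigupper} for the upper bound, use concavity of $n/\phi(n)$ via Jensen to control $\prod v_A(\ell_i)$, split into the finite and infinite $A$ cases, and for the lower bound insert $S_A$-words along the inverted orbit of the word supplied by Proposition~\ref{substitution}. You are in fact more careful than the paper on two points it leaves implicit: the need to transport the multiplicative constant from $v_A\sim\exp\circ h$ inside $h$ using concavity and monotonicity (and that $k\mapsto k\,h(c/k)$ is nondecreasing), and the explicit ``insert at position $i_j$'' construction for the lower bound, which the paper compresses into the phrase ``$wf$ may be expressed as a word of length $\sim 2n$''.
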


\begin{proof}
  We begin by the lower bound. For $n\in\N$, consider a word $w$ of
  length $n$ with $\delta(w)\sim n^\alpha$, which exists by
  Proposition~\ref{substitution}; write $\mathcal O(w)=\{x_1,\dots,x_k\}$
  for $k\sim n^\alpha$. Choose then $k$ elements $a_1,\dots,a_k$ of
  length $\le n^{1-\alpha}$ in $A$. Define $f\in \sum_XA$ by
  $f(x_i)=a_i$, all unspecified values being $1$. Then $wf\in W$ may
  be expressed as a word of length $n+|a_1|+\dots+|a_k|\sim 2n$ in the
  standard generators of $W$.

  Furthermore, different choices of $a_i$ yield different elements of
  $W$; and there are $\sim
  \exp(n^{1-\alpha}/\phi(n^{1-\alpha}))^{n^\alpha}=\exp(n/\phi(n^{1-\alpha}))$
  choices for all the elements of $A$. This proves the lower bound.

  For the upper bound, consider a word $w$ of length $n$ in $W$, and
  let $f\in\sum_XA$ denote its value in the base of the wreath
  product. The support of $f$ has cardinality at most
  $\delta(w)\precsim n^\alpha$ by Proposition~\ref{main}, and may take
  at most $\sim\exp(n^\alpha)$ values by Lemma~\ref{support}.

  Write then $\sup(f)=\{x_1,\dots,x_k\}$ for some $k\precsim
  n^\alpha$, and let $a_1,\dots,a_k\in A$ be the values of $w$ at its
  support; write $\ell_i=\|a_i\|$. We now consider two cases. If $A$
  is finite, then each of the $a_i$ may be chosen among, at most,
  $\#A$ possibilities, so there are $\sim\exp(n^\alpha)$ possibilities
  in total for the element $f$.

  Assume now that $A$ is infinite, so that $v_A(n)\succsim n$. Since
  $\sum\ell_i\le n$, the lengths of the different elements on the
  support of $f$ define a composition of a number not greater than $n$
  into at most $n^\alpha$ summands; such a composition is determined
  by $n^\alpha$ ``marked positions'' among $n+n^\alpha$, so there are
  at most $\binom{n}{n^\alpha}\sim\exp(\log(n)n^\alpha)$ such
  compositions. Each of the $a_i$ is then chosen among $v_A(\ell_i)$
  elements, and there are $\sim\exp(\ell_i/\phi(\ell_i))$ such choices
  for each $i$.

  By our concavity assumption, there are at most
  $\sim\prod\exp(\ell_i/\phi(\ell_i))\precsim\exp(n/\phi(n^{1-\alpha}))$
  choices for the elements in $A$.

  We have now decomposed $w$ into data that specify it uniquely, and
  we multiply the different possibilities for each of the pieces of
  data. First, there are $\precsim\exp(n^\alpha)$ possibilities for
  the value of $w$ in $\Grig$, by the upper
  bound~\eqref{eq:grigupper}. There are $\precsim\exp(n^\alpha)$
  possibilities for the support of $w$. There are
  $\precsim\exp(\log(n)n^\alpha)\exp(n/\phi(n^{1-\alpha}))$
  possibilities for the values of $w$ at its support, the first factor
  counting the number of compositions of $n$ as a sum of $n^\alpha$
  terms and the second factor counting the number of elements in $A$
  of these lengths. Altogether, we get
  \[v_W(n)\precsim\exp(n^\alpha+n^\alpha+n^\alpha\log(n)+n/\phi(n^{1-\alpha}))\sim\exp(n/\phi(n^{1-\alpha})),
  \]
  and we have obtained the claimed upper bound.
\end{proof}

We are ready to prove the first part of Theorem~\ref{mainthm:A}.
\begin{theorem}\label{thm:K}
  Consider the following sequence of groups: $K_0=\Z/2\Z$, and
  $K_{k+1}=K_k\wr_X\Grig$. Then every $K_k$ is a finitely generated
  infinite torsion group, with growth function
  \[
  v_{K_k}(n)\sim \exp(n^{1-(1-\alpha)^k}).
  \]
\end{theorem}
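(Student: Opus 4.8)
The plan is to prove Theorem~\ref{thm:K} by induction on $k$, using Lemma~\ref{lem:interm} as the engine for the growth computation and tracking the torsion and finite-generation properties along the way. The base case $k=0$ is immediate: $K_0=\Z/2\Z$ is finite, hence a finitely generated torsion group, and the formula $v_{K_0}(n)\sim\exp(n^{1-(1-\alpha)^0})=\exp(n^0)=\exp(1)$ reads as ``bounded'', which is correct for a finite group (with the convention that $\sim$ ignores multiplicative constants in the exponent, so a constant function is $\sim\exp(\text{const})$). For the inductive step, suppose $K_k$ is a finitely generated infinite torsion group with $v_{K_k}(n)\sim\exp(n^{1-(1-\alpha)^k})$; I want to conclude the same (with $k+1$) for $K_{k+1}=K_k\wr_X\Grig$.

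First I would settle the algebraic properties of $K_{k+1}$. Finite generation: since $\Grig$ acts transitively on $X$ and both $K_k$ and $\Grig$ are finitely generated, the standard generating set $S=S_{K_k}\cup S_\Grig$ described in Section~2 makes $K_{k+1}$ finitely generated. Torsion: an element of $K_{k+1}=(\sum_X K_k)\rtimes\Grig$ is a pair $(f,g)$; its powers are governed by the orbit structure of $g$ on $\sup(f)$. Since $\Grig$ is a torsion group, $g$ has finite order $N$, and $(f,g)^N=(f\cdot(g\cdot f)\cdots(g^{N-1}\cdot f),\,1)$ lies in $\sum_X K_k$, which is a torsion group because $K_k$ is (a finitely supported function into a torsion group has finite order, the lcm of the orders of its nonzero values); hence $(f,g)$ has finite order and $K_{k+1}$ is torsion. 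Infiniteness: $K_{k+1}$ contains the infinite group $\Grig$ (or, if one prefers, the infinite group $\sum_X K_k$ since $K_k\ne1$ and $X$ is infinite), so it is infinite. These verifications are routine and I would state them crisply rather than belabor them.

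The growth computation is the substantive part, but it reduces cleanly to Lemma~\ref{lem:interm}. Write $v_{K_k}(n)\sim\exp(n/\phi_k(n))$ with $\phi_k(n)=n^{(1-\alpha)^k}$; I must check the hypotheses of Lemma~\ref{lem:interm}, namely that $K_k$ is nontrivial (clear) and that $n/\phi_k(n)=n^{1-(1-\alpha)^k}$ is concave. Since $0<\alpha<1$ we have $0<(1-\alpha)^k<1$ for $k\ge1$, so the exponent $1-(1-\alpha)^k$ lies strictly in $(0,1)$ and $n\mapsto n^{1-(1-\alpha)^k}$ is indeed concave on $\R_+$; for $k=0$ the exponent is $0$ and the function is constant, hence (weakly) concave, which is the boundary case one should check works in the lemma's proof, or alternatively one simply treats $k=0\to k=1$ by hand since $v_{K_0}$ is bounded and $W=K_0\wr_X\Grig$ has growth $\sim\exp(n^\alpha)$ by the lower-bound argument in Lemma~\ref{lem:interm} together with the finite-$A$ case of its upper bound. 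Applying Lemma~\ref{lem:interm} then gives
\[
v_{K_{k+1}}(n)\sim\exp\bigl(n^{1-(1-\alpha)^k}\big/(n^{1-\alpha})^{\,?}\bigr),
\]
where the substitution $n\mapsto n^{1-\alpha}$ into $\phi_k$ yields $\phi_k(n^{1-\alpha})=n^{(1-\alpha)\cdot(1-\alpha)^k}=n^{(1-\alpha)^{k+1}}$, so $n/\phi_k(n^{1-\alpha})=n^{1-(1-\alpha)^{k+1}}$ and $v_{K_{k+1}}(n)\sim\exp(n^{1-(1-\alpha)^{k+1}})$, closing the induction.

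The main obstacle I anticipate is not any single hard step but rather making sure the exponents are threaded correctly through Lemma~\ref{lem:interm}: the lemma is stated for a group $A$ of growth $\exp(n/\phi(n))$ and outputs $\exp(n/\phi(n^{1-\alpha}))$, so one must be careful that composing this operation $k$ times starting from $\phi_0\equiv1$ (constant) really produces $\phi_k(n)=n^{(1-\alpha)^k}$, and in particular that the $k=0$ case — where $A$ is finite and $n/\phi(n)$ is merely bounded rather than genuinely concave and unbounded — is legitimately covered (it is: the finite-$A$ branch of the upper bound in Lemma~\ref{lem:interm}'s proof and its lower bound together give $\exp(n^\alpha)$ directly, matching $1-(1-\alpha)^1=\alpha$). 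A secondary point requiring a word of care is the torsion argument: one must observe that although $K_{k+1}$ is locally finite only if $\Grig$ is (which it is not — $\Grig$ is infinite torsion but not locally finite), torsion is nonetheless preserved by the wreath product $A\wr_XG$ whenever both $A$ and $G$ are torsion and the relevant orbit sums are finite, which holds here because elements of $\sum_X A$ have finite support. Beyond that, everything is an application of results already in hand.
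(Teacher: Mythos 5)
Your proposal is correct and matches the paper's approach: the paper's own proof is simply the induction $\phi_0=n$, $\phi_{k+1}(n)=\phi_k(n^{1-\alpha})$, hence $\phi_k(n)=n^{(1-\alpha)^k}$, with Lemma~\ref{lem:interm} doing all the work; you additionally spell out the routine torsion, finite-generation, and infiniteness checks that the paper leaves implicit, and you correctly note that the finite-$A$ branch of Lemma~\ref{lem:interm} covers the step from $K_0$. One small slip to fix: you write ``starting from $\phi_0\equiv1$ (constant)'' in your discussion of threading the exponents, but iterating $\phi\mapsto\phi(n^{1-\alpha})$ from the constant function $1$ would stay constant; the correct seed, which you in fact use everywhere else, is $\phi_0(n)=n$, so that it is $n/\phi_0(n)\equiv1$ that is constant, matching the bounded growth of the finite group $K_0$.
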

\begin{proof}
  We start by $\phi_0=n$; then
  $\phi_{k+1}(n)=\phi_k(n^{1-\alpha})$, so
  $\phi_k(n)=n^{(1-\alpha)^k}$.
\end{proof}

\begin{theorem}\label{thm:L}
  Consider the following sequence of groups: $L_0=\Z$, and
  $L_{k+1}=L_k\wr_X\Grig$. Then their growth functions satisfy
  \[
  v_{L_k}(n)\sim\exp(\log(n)n^{1-(1-\alpha)^k}).
  \]
\end{theorem}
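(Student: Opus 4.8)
The plan is to mimic the proof of Theorem~\ref{thm:K}, but feeding the recursion of Lemma~\ref{lem:interm} a different seed function $\phi$. The group $L_0=\Z$ has growth $v_{L_0}(n)\sim n\sim\exp(n/\phi_0(n))$ with $\phi_0(n)=n/\log(n)$; this function is unbounded, and $n/\phi_0(n)=\log(n)$ is concave for $n\ge1$, so the hypotheses of Lemma~\ref{lem:interm} are met by $L_0$. Applying Lemma~\ref{lem:interm} repeatedly, we get $v_{L_{k+1}}(n)\sim\exp(n/\phi_{k+1}(n))$ with $\phi_{k+1}(n)=\phi_k(n^{1-\alpha})$, provided we check at each stage that $n/\phi_k(n)$ is concave and that $\phi_k$ is unbounded, so that the lemma applies again with $A=L_k$.

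First I would solve the recursion explicitly. Since $\phi_0(n)=n/\log n$, one shows by induction that
\[
\phi_k(n)=\frac{n^{(1-\alpha)^k}}{(1-\alpha)^k\log(n)},
\]
so that
\[
\frac{n}{\phi_k(n)}=(1-\alpha)^k\log(n)\,n^{1-(1-\alpha)^k}.
\]
The constant $(1-\alpha)^k$ is absorbed by the $\sim$ relation (replacing $n$ by $Cn$ changes $\log(n)n^{1-(1-\alpha)^k}$ only by a bounded multiplicative factor and a lower-order additive term), so $v_{L_k}(n)\sim\exp(\log(n)n^{1-(1-\alpha)^k})$, which is the claimed formula. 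This also shows that each $\phi_k$ is unbounded, since $(1-\alpha)^k>0$.

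The one point that needs genuine care — the main obstacle, such as it is — is the concavity hypothesis of Lemma~\ref{lem:interm}: at stage $k$ we must know that $n\mapsto n/\phi_k(n)=(1-\alpha)^k\log(n)n^{1-(1-\alpha)^k}$ is concave (for $n$ large), since Lemma~\ref{lem:interm} was stated under that assumption on the group $A=L_k$ being plugged in. Writing $\beta_k=1-(1-\alpha)^k\in(0,1)$, the function is a constant times $g(n)=n^{\beta_k}\log n$; differentiating twice gives $g''(n)=n^{\beta_k-2}\big(\beta_k(\beta_k-1)\log n+(2\beta_k-1)\big)$, and since $\beta_k(\beta_k-1)<0$ this is negative for all sufficiently large $n$. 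Hence $n/\phi_k(n)$ is eventually concave; as asymptotic growth only depends on the behaviour at large $n$ (and one may replace $g$ by its concave majorant below the threshold without affecting the $\sim$-class), Lemma~\ref{lem:interm} applies at every stage. This closes the induction and completes the proof; in particular, taking $H_k:=L_k$ establishes part~(2) of Theorem~\ref{mainthm:A}, the groups being torsion-free because $L_0=\Z$ is and a permutational wreath product $L_k\wr_X\Grig$ of a torsion-free group with the torsion group $\Grig$ is torsion-free (an element $fg$ of finite order forces $g$ to have finite order in $\Grig$, hence some power lies in the torsion-free base $\sum_XL_k$ and is therefore trivial).
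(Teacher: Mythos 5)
Your proof of the theorem as stated is correct and takes essentially the same route as the paper — same seed $\phi_0(n)=n/\log n$, same recursion $\phi_{k+1}(n)=\phi_k(n^{1-\alpha})$ — with the added, legitimate diligence of solving the recursion explicitly and checking that the concavity and unboundedness hypotheses of Lemma~\ref{lem:interm} persist at each stage (the paper elides these checks).

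However, the concluding aside is wrong. The groups $L_k=L_{k-1}\wr_X\Grig$ are \emph{not} torsion-free for $k\ge1$: the extension $1\to\sum_XL_{k-1}\to L_k\to\Grig\to1$ is split, so $\Grig$, which is a torsion group, embeds as a subgroup of $L_k$; for instance the generator $a$ has order $2$ in $L_1$. Your argument runs ``an element $fg$ of finite order forces $g$ to have finite order in $\Grig$, hence some power lies in the torsion-free base and is therefore trivial,'' but this yields no contradiction: since $g$ genuinely has finite order in $\Grig$, the conclusion that some power of $fg$ is trivial is exactly what one expects when $fg$ itself has finite order. That is precisely why the paper does \emph{not} set $H_k=L_k$; in the proof of Theorem~\ref{mainthm:A}(2) it takes $H_1=H$, the torsion-free group of Proposition~\ref{prop:tf}, and $H_k=H_{k-1}\wr_XH$ for $k>1$, wreathing over the torsion-free group $H$ rather than over $\Grig$, so that both the base and the top of the extension are torsion-free. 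Theorem~\ref{thm:L} still has content — the groups $L_k$ realize the growth types $\exp(\log(n)n^{1-(1-\alpha)^k})$ — but they are not the torsion-free examples $H_k$ of Theorem~\ref{mainthm:A}.
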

\begin{proof}
  We start by $\phi_0=n/\log(n)$; then
  $\phi_{k+1}(n)=\phi_k(n^{1-\alpha})$. Now
  $\log(n^{1-\alpha})\sim\log(n)$, so we get $\phi_k(n)\sim
  n^{(1-\alpha)^k}/\log(n)$.
\end{proof}

\begin{example}\label{ex:expgrowth}
  The inverted orbits of $\Erschler$, for its action on the orbit $X$
  of the rightmost ray $\rho=\mathsf1^\infty$, have linear growth.
  Moreover, because $\Erschler$ contains the infinite-order element
  $ad$ acting freely on $X$, as we saw in~\eqref{eq:freediadicorbit},
  the permutational wreath product of any group $A$ with
  $(\Erschler,X)$ contains the wreath product $A\wr\Z$. Therefore, for
  any $A \ne \{1\}$ the growth of this permutational wreath product is
  exponential.
\end{example}

\begin{example}\label{ex:differentgrowth}
  Consider $G= G'=\Grig \times \Erschler$, and let $X$ and $X'$ be
  respectively the orbits of the rightmost ray $\rho=\mathsf1^\infty$
  under the action of $\Grig$, respectively $\Erschler$, on the rooted
  tree.  Extend the action of $\Grig$ on $X$ to an action of $G$ by
  making $\Erschler$ act trivially, and extend the action of
  $\Erschler$ on $X'$ to an action of $G'$ by making $\Grig$ act
  trivially.

  Let $A$ be a finite group containing at least two elements. The
  unmarked Schreier graph of $(G,X)$ is the same as the unmarked
  Schreier graph of $(G',X')$.
  
  However, the growth of the wreath product $A\wr_X G$ is
  subexponential, whereas the growth of $A\wr_{X'}G'$ is exponential.
\end{example}

\begin{figure}
  \begin{center}
  \begin{tikzpicture}[xscale=0.7]
  \footnotesize
  \path (0,0) edge[loop above] node {$b$} ();
  \path (0,0) edge[loop left] node {$c$} ();
  \path (0,0) edge[loop below] node {$d$} ();
  \foreach\i in {0,2,...,14} \path (\i,0) edge node[above] {$a$} (\i+1,0);
  \foreach\i/\g in {1/b,3/d,5/b,7/c,9/b,11/d,13/b,15/b} \path (\i,0) edge[bend left] node[above] {$\g$} (\i+1,0);
  \foreach\i/\g in {1/c,3/b,5/c,7/d,9/c,11/b,13/c,15/c} \path (\i,0) edge[bend right] node[below] {$\g$} (\i+1,0);
  \foreach\i/\g in {1/d,2/d,3/c,4/c,5/d,6/d,7/b,8/b,9/d,10/d,11/c,12/c,13/d,14/d,15/d,16/d} \path (\i,0) edge[loop above] node[above] {$\g$} ();
  \path (16,0) edge[densely dotted] (17,0);
  \end{tikzpicture}
  \end{center}
  \caption{The Schreier graph of $\Grig$. The leftmost point is the ray $\rho=\mathsf{111}\dots$, followed by $\mathsf0\rho,\mathsf{00}\rho,\mathsf{10}\rho,\dots$}\label{fig:schreier}
\end{figure}

Indeed, observe that $W=A\wr_XG = A\wr_X \Grig \times \Erschler$. By
Theorem~\ref{thm:K} we know that $K_1= A\wr_X \Grig$ has intermediate
growth. We see that $W$ is a direct sum of two groups of intermediate
growth, and hence the growth of this group is intermediate.

On the other hand $W'=A\wr_{X'}G' = A\wr_{X'}\Erschler \times \Grig$,
and already the first factor has exponential growth, see
Example~\ref{ex:expgrowth}.

The unmarked Schreier graph of $(\Grig,X)$, as well as the Schreier
graph of $(\Erschler,X')$, are rays, in which every second edge edge
has been duplicated, a loop has been added at each vertex, and three
loops are drawn at the origin (see Figure~\ref{fig:schreier}). The
unmarked Schreier graphs of $(G,X)$ and $(G',X')$ are obtained from
that graph by drawing four additional loops at each vertex.

\begin{remark}\label{rem:nilpotentactions}
  \textit Let $N$ be a finitely generated nilpotent group, acting
  transitively on an infinite set $X$. Then the inverted orbits for
  this action have linear growth: that is, there exists $C>0$ such
  that for any $n>1$ there exist a word $w_n$ of length $n$ such that
  its inverted orbit for the action on $X$ visits at least $Cn$
  points.
\end{remark}
\begin{proof}
  Take $A=\Z/2\Z$ and let $G$ be the wreath product of $A$ with
  $(N,X)$. Observe that $G$ is an extension of an Abelian group by a
  nilpotent group, so $G$ is solvable. Since $N$ and $A$ are finitely
  generated, so is $G$.  We know that $G$ contains as a subgroup
  $\sum_X A$. Since $G$ contains an infinitely generated subgroup, we
  conclude that $G$ is not virtually nilpotent.  Therefore, $G$ has
  exponential growth. However, $N$ has subexponential growth, so if it
  also had sublinear inverted orbit growth then $G$ would have
  subexponential growth.
\end{proof}

\subsection{Torsion-free examples}\label{ss:tf}
Grigorchuk constructed in~\cite{grigorchuk:pgps}*{\S5} a torsion-free
group $H$ of intermediate growth. We recall the basic steps: start by
\[H_0=\langle a,b,c,d\mid
[a^2,b],[a^2,c],[a^2,d],[b,c],[b,d],[c,d]\rangle,\] and define a
wreath recursion $\psi:H_0\to H_0\wr \mathfrak S_2$ by the same
formula~\eqref{eqgrig} as for Grigorchuk's first group, namely
\[
\psi:a\mapsto\pair<1,1>\varepsilon,\quad b\mapsto\pair<a,c>,\quad
c\mapsto\pair<a,d>,\quad d\mapsto\pair<1,b>.
\]
Set $K_0=1$ and inductively $K_{n+1}=\psi^{-1}(K_n\times K_n)$. Define
then $H=H_0/\bigcup_{n\ge0}K_n$, and use the same notation for the
generators $a,b,c,d$ of $H$ and the induced homomorphism $\psi:H\to
H\wr\mathfrak S_2$. Note that $H$ is the largest quotient of $H_0$
such that the restriction of $\psi$ to $\langle
b,c,d,b^a,c^a,d^a\rangle$ is injective.

We view the group $H$ in terms of permutational extensions, and
compute its growth function by adapting Lemma~\ref{lem:interm}.

Note first that the natural map $\xi:a\mapsto a,b\mapsto b,c\mapsto
c,d\mapsto d$ defines a homomorphism from $H$ to $\Grig$. Consider the
subgroup $C=\langle b^2,c^2,d^2,bcd\rangle$ of $H$; then, for instance
using the presentation~\eqref{eq:lysionok} of $\Grig$, the normal closure
$\langle a^2,C\rangle^H$ equals $\ker\xi$.

Grigorchuk proves in~\cite{grigorchuk:pgps}*{pages~199--200} that
$C\cong\Z^3$, and that $\langle a^2\rangle\cong\Z$.  Because $a^2$ is
central in $H$, we have exact sequences
\begin{gather*}
  1\longrightarrow \langle a^2\rangle\longrightarrow H\longrightarrow H/\langle a^2\rangle\longrightarrow 1,\\
  1\longrightarrow C^H\longrightarrow H/\langle a^2\rangle\longrightarrow\Grig\longrightarrow 1.
\end{gather*}

Let $\psi_0,\psi_1$ be the co\"ordinates of $\psi$, namely, the
set-maps defined by the projections $H\to H\wr\mathfrak S_2\to H\times
H\to H$, as in $\psi(g)=\pair<\psi_0(g),\psi_1(g)>\varepsilon^s$. Note
that $\psi_0,\psi_1$ are not homomorphisms, but their restriction to
$B:=\langle b,c,d\rangle$ is a homomorphism; $\psi_0$ maps to $\langle
a\rangle$, while $\psi_1$ permutes cyclically $b,c,d$. Consider
$\tau=\tau_1\tau_2\dots\in\{\mathsf0,\mathsf1\}^\infty$ a ray in
$\mathcal T$. Given $g\in H$, it is easy to see
(see~\cite{grigorchuk:pgps}*{page~200}) that
$\psi_{\tau_n}\psi_{\tau_{n-1}}\dots\psi_{\tau_1}(g)$ belongs to
$B:=\langle b,c,d\rangle$ for all $n$ large enough. Recall the
endomorphism $\sigma$ from~\eqref{eq:sigma}; it induces an
automorphism of $B$ permuting cyclically $b,d,c$, so we have
$\sigma(\psi_1(g))=g$ for all $g\in B$. The sequence
$\sigma^n\psi_{\tau_n}\psi_{\tau_{n-1}}\dots\psi_{\tau_1}(g)$
eventually stabilizes, and we call its limit $g_\tau\in B$ the
\emph{germ} of $g$ at $\tau$. Note that $g_\tau=1$ unless $\tau$ is in
the $H$-orbit of $\rho=\mathsf1^\infty$. For an element $g\in B$, its
germs are $g_\rho=g$ and $g_\tau=1$ for all $\tau\neq\rho$. Similarly,
for $g\in B$ and $x\in H$, the germs of the conjugate $x^{-1}gx=g^x\in
B^x$ are $(g^x)_{\rho x}=g$ and $(g^x)_\tau=1$ for all $\tau\neq\rho
x$.

\begin{lemma}\label{lem:kerxi}
  An element of $H$ is determined by its projection to $\Grig$, its
  $a$-exponent sum, and its germs:
  \[\ker\xi=\langle a^2\rangle\times\sum_{x\in
    (\Grig)_\rho\backslash\Grig}C^x\cong\Z\times\sum_X\Z^3.\]
\end{lemma}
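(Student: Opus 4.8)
The plan is to realize $\ker\xi$ as the image of the germ homomorphism. Write $V=\langle C\rangle^H$; since $a^2$ is central, $\ker\xi=\langle a^2,C\rangle^H=\langle a^2\rangle\,V$, and I first record that this product is direct. The subgroup $B_0=\langle b,c,d,b^a,c^a,d^a\rangle$ is normalized by $a$ — its generators are permuted by $a$-conjugation, using $(b^a)^a=b^{a^2}=b$ and the centrality of $a^2$ — hence $B_0\triangleleft H$, and in particular $V=\langle C\rangle^H\subseteq B_0$. Moreover $\langle a^2\rangle\cap B_0=1$, since any $a^{2k}\in B_0$ would be a central element of $B_0$ killed by $\psi$, hence trivial by the injectivity of $\psi|_{B_0}$ (the property defining $H$), so $k=0$ because $\langle a^2\rangle\cong\Z$. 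Thus $\ker\xi=\langle a^2\rangle\times V$, and since $\langle a^2\rangle\cong\Z$ and $C\cong\Z^3$, the lemma reduces to showing that the germ map restricts to an isomorphism $\gamma\colon V\to\sum_{\tau\in X}C$ carrying, for each $\tau\in X$, one conjugate of $C$ onto the $\tau$-th summand.

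That $\gamma$ is a well-defined homomorphism on $\ker\xi$ with values in $\sum_{\tau\in X}C$ is routine: since $\xi$ intertwines the wreath recursions, the $H$-action on $\mathcal T$ factors through $\xi$, so $\ker\xi$ is the pointwise stabilizer of $\mathcal T$; in particular no element of $\ker\xi$ exchanges the two subtrees, $\psi$ restricts to a homomorphism $\ker\xi\to\ker\xi\times\ker\xi$, and iterating, each $\psi_{\tau_n}\cdots\psi_{\tau_1}$ is a homomorphism $\ker\xi\to\ker\xi$. Hence $g_\tau\in B\cap\ker\xi=C$, the last equality because the $\Grig$-relators $b^2,c^2,d^2,bcd$ cut $C$ out as the kernel of $B\to\langle b,c,d\rangle\le\Grig$. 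The coordinate maps being multiplicative on the swap-free group $\ker\xi$, $g\mapsto(g_\tau)_\tau$ is a homomorphism; it is finitely supported on each element because $\ker\xi$ is generated by $a^2$ (all of whose germs vanish, as $\psi(a^2)=1$) together with the conjugates $C^x$ (each supported at the single point $\rho\xi(x)$). Surjectivity of $\gamma|_V$ is the germ formula quoted before the statement: for $\tau=\rho h\in X$ and a lift $\tilde h\in H$ of $h\in\Grig$, $\gamma$ carries $C^{\tilde h}$ isomorphically onto the $\tau$-th summand, and these generate $\sum_{\tau\in X}C$.

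The heart of the argument is the injectivity of $\gamma$ on $V$. Since $\langle a^2\rangle\subseteq\ker\gamma$ and $\langle a^2\rangle\cap V=1$, it suffices to prove $\ker(\gamma|_{\ker\xi})=\langle a^2\rangle$, and I would do this by minimal norm. From the definition of the germ one gets $(g_q)_\mu=\sigma^{-|q|}(g_{q\mu})$, so $\gamma(g)=0$ forces $\gamma(g_0)=\gamma(g_1)=0$ for the two first-level sections, which again lie in $\ker\xi$; and Lemma~\ref{lem:contract}, applied to a pre-reduced word of minimal norm for $g$, gives $\|g_0\|+\|g_1\|\le\eta\|g\|+C$ with $C=\eta\|a\|$. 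Let $g$ be a counterexample of least norm. If $g_0$ or $g_1$ is again a counterexample, minimality yields $\|g\|\le\|g_0\|+\|g_1\|\le\eta\|g\|+C$, i.e.\ $\|g\|\le C/(1-\eta)$. Otherwise $g_0,g_1\in\langle a^2\rangle$, so $\psi(g)\in\langle a^2\rangle\times\langle a^2\rangle$; invoking the arithmetic fact $\psi(H)\cap(\langle a^2\rangle\times\langle a^2\rangle)=\{1\}$ forces $\psi(g)=1$, whence $g\in\ker\psi\cap\operatorname{stab}_H(1)=\langle a^2\rangle$ — using $\operatorname{stab}_H(1)=\langle a^2\rangle B_0$, $\psi(a^2)=1$, and the injectivity of $\psi|_{B_0}$ — contradicting the choice of $g$. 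Hence any counterexample has norm at most $C/(1-\eta)$, and a direct check of the finitely many elements of $\ker\xi$ below that bound (they lie in $\langle a^2\rangle$ once their germs vanish) rules these out.

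I expect the arithmetic fact $\psi(H)\cap(\langle a^2\rangle\times\langle a^2\rangle)=\{1\}$ to be the main obstacle: it is precisely where the special structure of $H$ — torsion-freeness, carried by the central copy of $\Z$ spanned by $a^2$ — is used, and it fails for $\Grig$ itself, where $\psi$ is injective and $a^2=1$; it should be extracted from Grigorchuk's description of the congruence subgroups $K_n$ used to build $H$. Granting it, $\gamma|_V$ is an isomorphism onto $\sum_{\tau\in X}C$, so the conjugate $C^{\tilde h}$, being the preimage under $\gamma|_V$ of the $\tau$-th summand, depends only on $\tau=\rho\xi(h)$, hence only on the coset $(\Grig)_\rho h$; thus $V=\sum_{x\in(\Grig)_\rho\backslash\Grig}C^x$, and combining with $\ker\xi=\langle a^2\rangle\times V$ and the identifications $\langle a^2\rangle\cong\Z$, $C^x\cong C\cong\Z^3$ we conclude $\ker\xi=\langle a^2\rangle\times\sum_{x\in(\Grig)_\rho\backslash\Grig}C^x\cong\Z\times\sum_X\Z^3$.
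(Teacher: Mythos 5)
Your route is genuinely different from the paper's. The paper never invokes a contraction estimate here: it proves directly, by a short recursion on wreath coordinates, that $[y,z^g]=1$ for $y,z\in C$ and arbitrary $g\in H$, and that $[y,h]=1$ whenever $\xi(h)$ fixes $\rho$, so that $\ker\xi$ is \emph{a priori} a quotient of $\Z\times\sum_X\Z^3$; it then uses the germ computation $(y_1^{g_1}\cdots y_\ell^{g_\ell})_{\rho g_i}=y_i$ only to rule out further relations. You instead package the statement as ``the germ map $\gamma$ is an isomorphism $V\to\sum_X C$'' and put all the weight on injectivity, proved by a minimal-norm descent via Lemma~\ref{lem:contract}. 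Your set-up is sound: $\ker\xi=\langle a^2\rangle\times V$ with $V\subseteq B_0=\langle b,c,d,b^a,c^a,d^a\rangle$ (using $\psi(a^2)=1$ and injectivity of $\psi|_{B_0}$), the germ map is a homomorphism on the swap-free group $\ker\xi$ with values in $B\cap\ker\xi=C$, and surjectivity of $\gamma|_V$ is clear.

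The injectivity step, however, has a genuine gap which you flag yourself. The case ``$g_0,g_1\in\langle a^2\rangle$'' rests on the unproved assertion $\psi(H)\cap(\langle a^2\rangle\times\langle a^2\rangle)=\{1\}$. After stripping the central $a^{2m}$ this is exactly the statement $\psi(V)\cap(\langle a^2\rangle\times\langle a^2\rangle)=1$, which is the depth-one instance of the very freeness you are trying to establish, so the argument is in danger of circularity. The only way I see to prove it without already knowing the structure of $\ker\xi$ is to run essentially the commutation computations the paper carries out; an abelianization check (the six vectors $\psi(b),\dots,\psi(d^a)$ have pairwise disjoint nonzero $b,c,d$-slots in $H^{\mathrm{ab}}\times H^{\mathrm{ab}}$, forcing all coefficients to vanish) comes close, but goes through only if one first knows those slots have infinite order in $H^{\mathrm{ab}}$, which is again not evident prior to this lemma. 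Finally, the closing ``direct check of the finitely many elements of $\ker\xi$ below the bound $C/(1-\eta)$'' is left entirely to the reader and needs to be carried out for the descent to terminate. So the decomposition $\ker\xi=\langle a^2\rangle\times V$ and the framing via $\gamma$ are correct and clean, but the key injectivity of $\gamma|_V$ is not established as written.
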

\begin{proof}
  On the one hand, $a^2$ is central, and generates a split copy of
  $\Z$. As we noted above, $\ker\xi$ is generated by $a^2$ and
  conjugates of $C$.

  Consider next $y,z\in C$, and $g\in H$; we show that $y$ and $z^g$
  commute. Write $g=\pair<g',g''>\varepsilon^s$ for $g',g''\in H$ and
  $s\in\{0,1\}$. Write also $y=\pair<a^{2k},y'>$ and
  $z=\pair<a^{2\ell},z'>$.  If $s=1$, then
  $z^g=\pair<(z')^{g'},(a^{2\ell})^{g''}>=\pair<(z')^{g'},a^{2\ell}>$
  using the relations in $H_0$; so
  $[y,z^g]=\pair<[(z')^{g'},a^{2k}],[a^{2\ell},y']>=1$. If $s=0$, then
  $z^g=\pair<(a^{2\ell})^{g'},(z')^{g''}>=\pair<a^{2\ell},(z')^{g''}>$;
  so
  $[y,z^g]=\pair<[a^{2\ell},a^{2k}],[y',(z')^{g''}]>=\pair<1,[y',(z')^{g''}]>$,
  and now, because $g''$ is shorter than $g$, we eventually have $g\in
  B$, so by induction $[y,z^g]=1$.

  Consider finally $y\in C$ and $h\in H$ whose image $\xi(h)$ in
  $\Grig$ fixes $\rho$; we show that $y$ and $h$ commute. Write again
  $y=\pair<a^{2k},y'>$ and $h=\pair<h',h''>$; then
  $[y,h]=\pair<1,[y',h'']>$, and by induction eventually $h\in B$ so
  $[y,h]=1$. It now follows that $\ker\xi$ is a quotient of
  $\Z\times\sum_X\Z^3$.

  On the other hand, consider an element $h$ of $H$ of the form
  $y_1^{g_1}\dots y_\ell^{g_\ell}$ for some distinct $g_i\in
  (\Grig)_\rho\backslash\Grig$ and $y_i\in C$. Consider some
  $i\in\{1,\dots,\ell\}$; then the germ $h_{\rho g_i}$ equals $y_i$,
  so no relations occur among the elements of $\Z\times\sum_X\Z^3$
  when they are mapped to $\ker\xi$.
\end{proof}

The difference between $H$ and the wreath product $\Z^3\wr_X\Grig$ is
twofold: $H$ is not a split extension of $\sum_X\Z^3$; and the
generator $a\in\Grig$ lifts to an infinite-order element of $H$. We
nevertheless show that $H$ and $\Z^3\wr_X\Grig$ have the same
asymptotic growth:
\begin{proposition}\label{prop:tf}
  The group $H$ has growth $\sim\exp(\log(n)n^\alpha)$.
\end{proposition}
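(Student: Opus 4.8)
The plan is to mimic the proof of Lemma~\ref{lem:interm}, applied with $A=\Z^3$, so that $\phi_0(n)=n/\log n$ and the wreath product $\Z^3\wr_X\Grig$ has growth $\exp(\log(n)n^\alpha)$, and then argue that passing from that wreath product to the non-split extension $H$ changes neither the upper nor the lower bound. The structural input is Lemma~\ref{lem:kerxi}, which identifies $\ker\xi$ with $\Z\times\sum_X\Z^3$ and says an element of $H$ is pinned down by (i) its image in $\Grig$, (ii) its $a$-exponent sum, and (iii) its germs (the $\sum_X\Z^3$ part).

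For the \textbf{lower bound} I would use that $H$ surjects nothing helpful, so instead work inside $H$ directly: take a word $w$ of length $n$ in $\Grig$ with $\delta(w)\sim n^\alpha$ (Proposition~\ref{substitution}), lift it to the same word in the generators $a,b,c,d$ of $H$, and at the $k\sim n^\alpha$ points of the inverted orbit $\mathcal O(w)$ insert commuting germ-elements from $C\cong\Z^3$ of length $\le n^{1-\alpha}$. Just as in Lemma~\ref{lem:interm}, different choices give different elements of $H$ (the germs at distinct orbit points are read off independently by Lemma~\ref{lem:kerxi}), and they are expressible by words of length $\sim 2n$; counting the $\exp(n^{1-\alpha}\log(n^{1-\alpha}))^{n^\alpha}=\exp(\log(n)n^\alpha)$ choices (here $\log(n^{1-\alpha})\sim\log n$) gives the lower bound.

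For the \textbf{upper bound} I would take an element $g\in H$ of word length $n$ and decompose the data specifying it, exactly paralleling the upper-bound argument in Lemma~\ref{lem:interm}. Its image $\xi(g)\in\Grig$ has $\precsim\exp(n^\alpha)$ possibilities by~\eqref{eq:grigupper}. The $a$-exponent sum of $g$ is an integer of absolute value $\le n$, contributing a negligible polynomial factor. It remains to bound the number of possibilities for the germ data, i.e.\ for the $\sum_X\Z^3$-component: the set of orbit points carrying a nontrivial germ has cardinality $\precsim n^\alpha$ (inverted orbit growth, Proposition~\ref{main}) and $\precsim\exp(n^\alpha)$ possible values (Lemma~\ref{support}); the germs themselves are elements of $\Z^3$ whose lengths sum to $\precsim n$, so as in Lemma~\ref{lem:interm} the composition-of-$n$-into-$n^\alpha$-parts count gives $\binom{n}{n^\alpha}\sim\exp(\log(n)n^\alpha)$, and the product of the $v_{\Z^3}(\ell_i)\sim\ell_i^3$ factors, being at most $\exp(\log(n)n^\alpha)$ by concavity of $n/\log n$, is absorbed. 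Multiplying the four pieces yields $v_H(n)\precsim\exp(n^\alpha+n^\alpha+\log(n)+\log(n)n^\alpha)\sim\exp(\log(n)n^\alpha)$.

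The \textbf{main obstacle} is the bookkeeping needed to justify that the non-splitness of $H$ over $\sum_X\Z^3$ and the infinite order of $a$ do not matter: one must be careful that a word of length $n$ in $H$, when rewritten via the recursion, really does have germ-support contained in an inverted orbit of length $\precsim n^\alpha$ and total germ-length $\precsim n$ — this requires tracking how lengths behave under $\psi$ and $\sigma$ (the contraction estimate Lemma~\ref{lem:contract} governs the former, and $\sigma$ has bounded effect on $C\cong\Z^3$). Once Lemma~\ref{lem:kerxi} is in hand, though, these are routine checks analogous to those already carried out in Sections~4 and~5, so I expect the proof to be short: essentially ``apply Lemma~\ref{lem:interm}'s argument with $A=\Z^3$, reading the base-group coordinates off via Lemma~\ref{lem:kerxi} instead of via a direct product decomposition.''
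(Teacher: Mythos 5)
Your proposal is correct and follows essentially the same route as the paper: use Lemma~\ref{lem:kerxi} to read an element of $H$ as (image in $\Grig$, $a$-exponent sum, germ function $X\to C$), then count each piece exactly as in Lemma~\ref{lem:interm}, inserting $C$-elements at the $\sim n^\alpha$ inverted-orbit points for the lower bound. The only place you leave work undone --- making precise that a length-$n$ word has germ-support of size $\precsim n^\alpha$ and germ values of size $\precsim n$ --- the paper handles not by iterating $\psi$ and $\sigma$ as you suggest, but more directly: it fixes a set-theoretic section $\nu$ of $\xi$ (with germs of $\nu(g)$ in $\{1,b,c,d\}$ and $a$-exponent in $\{0,1\}$), writes every element in the normal form $a^{2\ell}f\nu(g)$, and checks that left-multiplication by a generator translates $f$ and at most multiplies its value at $\rho$ by one generator of $C$, so the needed control is immediate. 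Finally, a small arithmetic slip in your lower bound: the ball of radius $n^{1-\alpha}$ in $\Z^3$ has $\sim(n^{1-\alpha})^3=\exp(3\log n^{1-\alpha})$ elements, so the intermediate expression should be $\exp(\log(n^{1-\alpha}))^{n^\alpha}$, not $\exp(n^{1-\alpha}\log(n^{1-\alpha}))^{n^\alpha}$; the stated conclusion $\exp(\log(n)n^\alpha)$ is unaffected.
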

\begin{proof}
  We define a set-theoretic splitting $\nu$ of $\xi:H\to\Grig$ by
  the condition that, for all $g\in\Grig$, the germs $\nu(g)_\tau$
  all belong to $\{1,b,c,d\}$, and that the total exponent sum $|\nu(g)|_a$
  of $a$ in $\nu(g)$ is $0$ or $1$.

  By Lemma~\ref{lem:kerxi}, elements $h\in H$ can, and will, be put in
  the form $a^{2\ell}f\nu(g)$, with $g\in\Grig$, $f:X\to C$ finitely
  supported, and $\ell\in\Z$. We consider the effect of
  left-multiplying such an expression by a generator
  $t\in\{a,b,c,d\}^{\pm1}$. First, consider $t=a^k$ for
  $k\in\{1,-1\}$. Write $|\nu(g)|_a+k=n+2m$, with
  $n\in\{0,1\}$. Then
  \[th = a^{2\ell}tf\nu(g)=a^{2(\ell+m)}(t\cdot f)\nu(ag).
  \]
  Consider next $t\in\{b,c,d\}^{\pm1}$. Then
  \[th = a^{2\ell}tf\nu(g)=a^{2\ell}(t\cdot f)t\nu(g);
  \]
  now, in $\langle b,c,d\rangle$, write $t\nu(g)_\rho=zr$ for $z\in
  C$ and $r\in\{1,b,c,d\}$. Denote still by $z$ the function $X\to C$
  which takes value $z$ at $\rho$ and is trivial everywhere else. We
  then have
  \[th = a^{2\ell}(t\cdot f)z\nu(tg).
  \]
  It follows that the action of a generator on an element of $H$,
  written in the form $a^{2\ell}f\nu(g)$, is by translation of $f$
  (just as in the wreath product $\Z^3\wr_X\Grig$), possibly followed
  by a multiplication at $\rho$ of $f$ by a generator of $C$ or its
  inverse.

  More pedantically, the computation above shows that the cocycle
  $\eta(g,h):=\nu(gh)^{-1}\nu(g)\nu(h)$ associated with the
  extension $1\to C^H\to H\to\Grig\to 1$ is controlled in the
  following manner: if $|g|,|h|\le n$, then $\eta(g,h):X\to C$ is
  supported on a set of cardinality $n^\alpha$, and takes values
  bounded in $\{-n,\dots,n\}$.

  The remainder of the growth computation follows closely the argument
  in Lemma~\ref{lem:interm}; we repeat it briefly.

  Consider the representations as $a^{2\ell}f\nu(g)$ of elements $h\in
  H$ of norm at most $n$. The element $g=\xi(h)$ belongs to $\Grig$
  and has norm at most $n$, so may take at most $\exp(Dn^\alpha)$
  values, for a predefined constant $D$. The function $f$ is supported
  on a set of cardinality at most $Cn^\alpha$, for another predefined
  constant $C$, and takes values in $\{-n,\dots,n\}$; so there are at
  most $\exp(\log(2n+1)Cn^\alpha)$ possibilities for $f$. Finally
  $|\ell|\le n$. In total, there are $\precsim\exp(\log(n)n^\alpha)$
  values for $h$.

  For the lower bound, consider a word $w=w_1\dots w_n$ of length $n$
  over $\{a,b,c,d\}$ with $\delta(w)\sim n^\alpha$, which exists by
  Lemma~\ref{substitution}; write $\mathcal O(w)=\{x_1,\dots,x_k\}$
  for $k\sim n^\alpha$, and let $i_1,\dots,i_k\in\{1,\dots,n\}$ be
  such that $\rho w_{i_j}\dots w_n=x_j$ for $j=1,\dots,k$.  Choose
  then $k$ numbers $a_1,\dots,a_k$ in $\Z\cap[1,n^{1-\alpha}]$. Insert
  $(bcd)^{a_j}$ before position $i_j$ in $w$, and call the resulting
  word $v(a_1,\dots,a_k)$.

  First, the length of $v(a_1,\dots,a_k)$ is at most $n+3n^\alpha
  n^{1-\alpha}=4n$. Then, the germ at $x_j$ of $v(a_1,\dots,a_k)$
  belongs to $\{1,b,c,d\}(bcd)^{a_j}$, so all $v(a_1,\dots,a_k)$ are
  distinct. It follows that there are at least
  $(n^{1-\alpha})^{n^\alpha}$ elements of length $4n$ in $H$.
\end{proof}

\begin{proof}[Proof of Theorem~\ref{mainthm:A}, second part]
  For $k=0$, consider the group $H_0=\Z$; for $k=1$, consider the
  group $H_1=H$ from Proposition~\ref{prop:tf}. For $k>1$, consider
  inductively $H_k=H_{k-1}\wr_X H$. They are torsion-free, as
  extensions of a torsion-free group by a torsion-free group.
\end{proof}

\subsection{Orbits on pairs of rays}
We gather here some results
from~\cite{bartholdi-g:parabolic}. Consider the ray
$\rho=\mathsf1^\infty$ the ray in the binary tree $\mathcal T$, and
its orbit $X:=\rho\Grig$. The group $\Grig$ acts on $X$, and therefore
acts (diagonally) on $X\times X$.

Because $\Grig$ acts transitively on $X$, the $\Grig$-orbits on
$X\times X$ are in bijection with the orbits of the stabilizer
$P=(\Grig)_\rho$ on $X$, and also in bijection with the double cosets
$PgP$ of $P$ in $\Grig$.

The set of orbits of $\Grig$ on $X\times X\setminus\{(x,x)\mid x\in
X\}$ may be readily described. A pair of distinct points $(x,y)\in
X\times X$ determines a bi-infinite path in $\mathcal T$, namely the
path $\gamma$ that starts from $x$, goes to the root of $\mathcal T$,
and leaves towards $y$. Let $\overline\gamma$ denote the geodesic path
(without backtracking) associated with $\gamma$, and let $(x|y)\in\N$
denote the minimal distance of this geodesic to the root of $\mathcal
T$.  The action of $\Grig$ on $\mathcal T$ and on $\partial\mathcal T$
induces an action on bi-infinite geodesics in $\mathcal T$, so $(x|y)$
is preserved by the $\Grig$-action. We now show that pairs $(x,y)$ and
$(x',y')$ belong to the same $\Grig$-orbit if and only if
$(x|y)=(x'|y')$.

\noindent We summarize the results:
\begin{lemma}[\cite{bartholdi-g:parabolic}*{Lemma~9.10}]\label{lem:orbits2}
  The orbits of the stabilizer $P=(\Grig)_\rho$ of $\rho$ on
  $X\setminus\{\rho\}$ are described as follows:
  \begin{equation}\label{eq:orbits}
    \mathcal O=\{\mathsf1^k\mathsf0\{\mathsf0,\mathsf1\}^*\rho\mid k\in\N\}.
  \end{equation}
  In particular, there are infinitely many orbits of $\Grig$ on
  $X\times X$.  For $x\neq y\in X$, we denote by $(x|y)\in\N$ the
  length of the maximal common prefix of $x,y$; it is the distance to
  the root of the geodesic in $\mathcal T$. We also set
  $(x|x)=\infty$.  The orbit of $(x,y)$ is then completely determined
  by $(x|y)\in\N\cup\{\infty\}$.

  Recall the endomorphism $\sigma$ from~\eqref{eq:sigma}. The set
  $T=\{\sigma^n(a)\mid n\in\N\}$ is a set of non-trivial double coset
  representatives of $P$.\qed
\end{lemma}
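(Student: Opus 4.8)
The plan is to prove, in order, the orbit description, the statement that there are infinitely many orbits, and the double-coset assertion; the only substantial point is a transitivity property of the parabolic subgroup $P=\stab_\Grig(\rho)$. Throughout, for a vertex $v$ fixed by $g$ I write $g|_v$ for the section of $g$ at $v$, i.e.\ the automorphism of the subtree below $v$ induced by $g$.

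First, the easy half. Recall that $X=\rho\Grig$ is the cofinality class of $\rho=\mathsf1^\infty$ (the rays equal to $\mathsf1^\infty$ in all but finitely many letters), and for $x\ne\rho$ in $X$ write $(x|\rho)=k$ when $x\in\mathsf1^k\mathsf0\{\mathsf0,\mathsf1\}^*\rho$. Because $\Grig$ acts by rooted-tree automorphisms it preserves distance to the root, hence the confluent vertex of a pair of rays, hence the number $(x|y)$; so $(x|\rho)$ is a $P$-invariant, the $P$-orbits on $X\setminus\{\rho\}$ refine the sets $\mathcal O_k$, and, as $(\rho|\mathsf1^k\mathsf0\rho)=k$ for all $k$, there are already infinitely many $\Grig$-orbits on $X\times X$. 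What remains is to show $P$ is transitive on each $\mathcal O_k$, and I would first reduce this to $k=0$. Since $\Grig$ is fractal --- $\psi_i$ maps the $\Grig$-stabilizer of the length-one vertex $\mathsf i$ onto $\Grig$ --- an easy induction shows that the $\Grig$-stabilizer of the vertex $\mathsf1^k$ restricts \emph{onto} $\Grig$ on the subtree below $\mathsf1^k$. Consequently the map $P\to\Grig$, $g\mapsto g|_{\mathsf1^k}$, has image exactly $P$: it lands in $P$ because $g$ fixes $\rho=\mathsf1^k\mathsf1^\infty$, and conversely any preimage of $h\in P$ fixing the vertex $\mathsf1^k$ fixes $\rho$ precisely when $h\in P$. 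The $P$-action on $\mathcal O_k$ factors through this map and through the bijection $\mathsf1^kz\leftrightarrow z$ identifying $\mathcal O_k$ with the analogous set $\mathcal O_0$ in the subtree, so transitivity on $\mathcal O_0$ gives transitivity on every $\mathcal O_k$.

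The case $k=0$ is the crux. Each $g\in P$ fixes the vertex $\mathsf1$; writing $\psi(g)=\pair<g_0,g_1>$ we have $g\in P$ iff $g_1\in P$, and then $(\mathsf0z)g=\mathsf0(zg_0)$. Hence $P$ is transitive on $\mathcal O_0=\mathsf0X$ iff $\psi_0(P):=\{\,g_0\mid\pair<g_0,g_1>\in\psi(\stab_\Grig(\mathsf1)),\ g_1\in P\,\}$ satisfies $\psi_0(P)\,P=\Grig$. I would prove this by producing enough of $\psi_0(P)$. On one side $\psi_0(b)=a\in\psi_0(P)$. On the other side $\operatorname{rist}_\Grig(\mathsf0)\le P$, since an automorphism supported off the ray fixes $\rho$; hence $\psi_0(P)$ contains the normal, finite-index subgroup $N:=\operatorname{rist}_\Grig(\mathsf0)|_{\mathcal T_{\mathsf0}}$, which for $\Grig$ is $\langle b\rangle^\Grig$ (of index $8$, with $\Grig/N\cong D_4$). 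It now suffices to check $\langle N,a\rangle\,P=\Grig$; since $N$ is normal this is $NP\cup NaP$, so I only need $[\Grig:NP]=2$ and $a\notin NP$. Both follow from a short computation in $\Grig/N$: the Schreier graph of $(\Grig,X)$ carries a $\Z/2$-colouring preserved by $b,c,d$ and reversed by $a$ (it is visible in Figure~\ref{fig:schreier}), so evaluating it at $\rho$ shows every element of $P$ has even $a$-exponent, whence the image of $P$ in $\Grig/N$ lies in an index-two subgroup; and the images of $b,c,d,ada,(da)^2\in P$ already generate that subgroup, while $a$ maps outside it. I expect this verification --- that the parabolic subgroup is ``large'' in the relevant finite quotient, resting on the branch structure and the Lysionok presentation of $\Grig$ --- to be the main obstacle.

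Finally, $T=\{\sigma^n(a)\}$. Because $\sigma(a)=aca$ and $\sigma(b)=d,\ \sigma(c)=b,\ \sigma(d)=c$ all lie in $\stab_\Grig(\mathsf1)$ with sections at $\mathsf1$ equal respectively to $a,b,c,d$, an induction shows $\sigma(w)\in\stab_\Grig(\mathsf1)$ and $\sigma(w)|_{\mathsf1}=w$ for every word $w$; therefore $\rho\,\sigma^n(a)=\mathsf1\bigl(\rho\,\sigma^{n-1}(a)\bigr)$, and since $\rho\,\sigma^0(a)=\mathsf0\rho\in\mathcal O_0$ we get $\rho\,\sigma^n(a)\in\mathcal O_n$ for all $n$. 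Thus the double cosets $P\sigma^n(a)P$ are pairwise distinct and non-trivial, and as the $\mathcal O_n$ exhaust the non-trivial $P$-orbits on $X$ they exhaust the non-trivial double cosets, completing the proof.
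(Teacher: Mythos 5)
The paper does not prove this lemma: it is stated with a citation to \cite{bartholdi-g:parabolic}*{Lemma~9.10} and terminated by $\square$, meaning the proof is delegated to that reference. So there is no in-paper proof to compare against; I can only assess your blind argument on its own terms, and it is, as far as I can tell, correct.

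A few remarks on the delicate points. (1) Your claim that $N:=\operatorname{rist}_\Grig(\mathsf0)|_{\mathcal T_{\mathsf0}}=\langle b\rangle^\Grig=:B$ is stated without justification. The inclusion $B\le N$ is easy ($d^a=\pair<b,1>$, plus surjectivity of $\psi_0$ on $\stab(\mathsf1)$); the reverse inclusion requires checking that the image of $\psi(\stab(\mathsf1))$ in $D_4\times D_4$ is the graph of the flip $a\leftrightarrow c$, which does hold but deserves a line. Fortunately your argument only uses the easy inclusion $B\le N\le\psi_0(P)$: you could skip the identification of $N$ entirely, replace $N$ by $B$ throughout, and do the final computation in $\Grig/B\cong D_4$ directly. (2) The $\Z/2$-colouring you invoke is not the naive bipartition of the path in Figure~\ref{fig:schreier} (that one is \emph{not} preserved by $b,c,d$, since some $b,c,d$-edges are non-loops); the correct colouring is $v_i\mapsto\lceil i/2\rceil\bmod 2$. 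A slicker justification that every element of $P$ has even $a$-exponent is simply to note that $P\le\stab(\mathsf1)=\langle b,c,d,b^a,c^a,d^a\rangle$, and all six generators lie in the kernel of the homomorphism $\pi:\Grig\to\Z/2$, $a\mapsto1$, $b,c,d\mapsto0$ (which is well defined because the abelianization is $(\Z/2)^3$). (3) The reduction from $\mathcal O_k$ to $\mathcal O_0$ via the fractal property, the translation of transitivity into $\psi_0(P)\,P=\Grig$, the computation of the image of $P$ in $D_4$ as the Klein four-group $\{1,c,aca,(ca)^2\}$, and the verification that $\rho\,\sigma^n(a)=\mathsf1^n\mathsf0\rho$ using $\sigma(w)|_{\mathsf1}=w$ are all correct.

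So: a correct and complete proof of a statement the paper itself only cites, with the two caveats above about claims left implicit.
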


\noindent A generating set for $P$ has also been computed:
\begin{lemma}[\cite{bartholdi-g:parabolic}*{Theorem~4.4}]\label{lem:genP}
  The stabilizer $P=(\Grig)_\rho$ is generated by
  \[U=\bigcup_{n\in\N}\sigma^n\{b,c,d,d^a,(ac)^4,\sigma([a,b])^a,\sigma^2([a,b])^a\}.\]
\end{lemma}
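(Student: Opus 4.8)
The plan is to prove the two inclusions $\langle U\rangle\subseteq P$ and $P\subseteq\langle U\rangle$ separately, the first by a direct verification and the second by exploiting the self-similar structure of $P$. Set $H:=\stab_\Grig(\mathsf1)=\stab_\Grig(\text{level }1)$, an index-$2$ subgroup, and recall that $\psi$ restricts to an injection $H\hookrightarrow\Grig\times\Grig$; writing $\psi(g)=\pair<\psi_0(g),\psi_1(g)>$ for $g\in H$, one has $g\in P$ if and only if $\psi_1(g)\in P$, since $\rho=\mathsf1\rho$. A first observation is that the substitution $\sigma$ of~\eqref{eq:sigma} descends to an \emph{injective endomorphism} of $\Grig$ (each defining relator in~\eqref{eq:lysionok} is mapped by $\sigma$ to a relator), that $\sigma(\Grig)\subseteq H$, and that $\psi_1\circ\sigma=\operatorname{id}_\Grig$ (check on $a,b,c,d$: $\psi_1(\sigma a)=\psi_1(aca)=a$, $\psi_1(\sigma b)=\psi_1(d)=b$, and so on). Hence $\sigma(P)\subseteq P$ — if $g\in P$ then $\sigma(g)\in H$ acts on the $\mathsf1$-subtree as $\psi_1(\sigma g)=g$, so fixes $\rho$ — and $\psi_1|_P$ is \emph{onto} $P$, with $\sigma$ as a set-theoretic section. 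Finally, since $\sigma^{n+1}(U_0)\subseteq U$ for $U_0:=\{b,c,d,d^a,(ac)^4,\sigma([a,b])^a,\sigma^2([a,b])^a\}$, we have $\sigma(\langle U\rangle)\subseteq\langle U\rangle$.

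The inclusion $\langle U\rangle\subseteq P$ is then easy: as $\sigma^n(P)\subseteq P$ for all $n$, it suffices to check $U_0\subseteq P$, which is a finite computation with the wreath recursion. Indeed $\rho$ is fixed by $b,c,d$; $\psi(d^a)=\pair<b,1>$, $\psi(\sigma([a,b])^a)=\pair<(ab)^2,1>$ and $\psi(\sigma^2([a,b])^a)=\pair<\sigma([a,b]),1>$ all act trivially on the $\mathsf1$-subtree; and $\psi((ac)^4)=\pair<(da)^2,(ad)^2>$ with $(ad)^2=\pair<b,b>$ fixing $\rho$.

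For the reverse inclusion I would use the recursive structure of $P$. Projecting $\psi(P)$ onto its second coordinate yields an exact sequence
\[1\longrightarrow R\longrightarrow P\stackrel{\psi_1}{\longrightarrow}P\longrightarrow1,\]
where $R=\{g\in H:\psi_1(g)=1\}=\operatorname{rist}_\Grig(\mathsf0)$ is the rigid stabilizer of the vertex $\mathsf0$; note $R\subseteq P$ since its elements act trivially on the $\mathsf1$-subtree. Consequently, for $g\in P$ with $g_1:=\psi_1(g)$, the element $g\,\sigma(g_1)^{-1}$ lies in $R$, so $P=\langle R,\sigma(P)\rangle$; iterating and using that $\psi_1$ strictly decreases the norm $\|\cdot\|$ on pre-reduced words above a fixed threshold (Lemma~\ref{lem:contract}), every $g\in P$ is a product of elements of $\bigcup_{n\ge0}\sigma^n(R)$ and of a single element of the finite set $F:=\{h\in P:\|h\|\le C/(1-\eta)\}$. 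Since $\sigma(\langle U\rangle)\subseteq\langle U\rangle$, the inclusion $P\subseteq\langle U\rangle$ therefore reduces to two points: (i) $R=\operatorname{rist}_\Grig(\mathsf0)\subseteq\langle U\rangle$, and (ii) $F\subseteq\langle U\rangle$ — a finite check, since elements of $P$ of bounded length already lie in $\langle b,c,d,d^a,(ac)^4,\dots\rangle$.

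The heart of the matter, and the step I expect to be the main obstacle, is (i). Here one uses that $\psi_0$ restricts to an isomorphism $R\xrightarrow{\ \sim\ }N_0:=\psi_0(R)$, that $N_0\triangleleft\Grig$, and that $\operatorname{rist}_\Grig(\text{level }1)=R\times R^a$ has finite index in $\Grig$ (the branch property of $\Grig$); it follows that $N_0$, hence $R$, is finitely generated. I would then exhibit an explicit finite generating set of $R$ — for instance by Reidemeister–Schreier applied to $N_0\triangleleft\Grig$, or by reading it off the known branch structure of $\Grig$; the generators arising from $b,(ab)^2,\sigma([a,b]),\dots\in N_0$ pull back under $\psi_0^{-1}$ to $d^a,\sigma([a,b])^a,\sigma^2([a,b])^a$ and their $\sigma$-iterates — and verify, by a finite wreath-recursion computation, that each generator lies in $\langle U\rangle$. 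The bookkeeping around the recursion (pre-reduction, the base case $F$, the termination of the iteration) is routine; isolating a generating set of $\operatorname{rist}_\Grig(\mathsf0)$ and matching it against $U$ is where the genuine work lies.
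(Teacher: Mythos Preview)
The paper does not prove this lemma; it is quoted from~\cite{bartholdi-g:parabolic}*{Theorem~4.4} without argument, so there is no proof in the present paper to compare against. Your strategy is the standard one for parabolic subgroups of contracting self-similar groups and is, in outline, how such results are established: exploit the short exact sequence $1\to\operatorname{rist}(\mathsf0)\to P\xrightarrow{\psi_1}P\to1$ with section $\sigma$, iterate using contraction to reduce to a finite base set, and identify generators of $\operatorname{rist}(\mathsf0)$ explicitly.

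The sketch is sound; two points deserve care when you fill in details. First, Lemma~\ref{lem:contract} is stated for \emph{words}, so to run the iteration $g\mapsto\psi_1(g)$ on group elements you must pass through a minimal pre-reduced representative at each step. Second, your deduction that $N_0=\psi_0(R)$ has finite index in $\Grig$ from ``$\operatorname{rist}(\text{level }1)$ has finite index'' needs one more line: finite index of $R\times R^a$ in $H$ gives $[\psi(H):N_0\times N_0]<\infty$, and then surjectivity of the first projection $\psi(H)\twoheadrightarrow\Grig$ (the first coordinates of $\psi(b),\psi(c),\psi(d),\psi(b^a),\psi(c^a),\psi(d^a)$ already contain $a,b,c,d$) yields $[\Grig:N_0]<\infty$. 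The explicit matching of a finite generating set of $R$ against the listed elements is, as you correctly identify, where the genuine work lies; this is precisely what is carried out in the cited reference.
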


\subsection{Presentations for wreath products}
We recall the notion of $L$-presentation, introduced
in~\cite{bartholdi:lpres}. A group $G$ is \emph{finitely
  $L$-presented} if there exists a finitely generated free group
$F=\langle S\rangle$, a finite set $\Phi$ of endomorphisms of $F$, and
finite subsets $Q,R$ of $F$, such that $G\cong F/\langle
Q\cup\bigcup_{\phi\in\Phi^*}\phi(R)\rangle^F$. The expression $\langle
S|Q|\Phi|R\rangle$ is the corresponding \emph{finite $L$-presentation}.

In particular, the first Grigorchuk group is finitely $L$-presented as
\[\Grig=\langle a,b,c,d||\sigma|a^2,bcd,[d,d^a],[d,d^{(ac)^2a}]\rangle.\]

\begin{proposition}
  Let $A$ be a finitely $L$-presented group. Then $A\wr_X\Grig$ is
  finitely $L$-presented.
\end{proposition}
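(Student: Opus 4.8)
The plan is to exhibit an explicit finite $L$-presentation for $W=A\wr_X\Grig$, assembled from a finite $L$-presentation of $A$, the Lysionok-type finite $L$-presentation of $\Grig$, and a finite collection of commutator relations that pin down the structure of the base $\sum_X A$ together with its $\Grig$-action. Recall $W$ is generated by the standard set $S=S_A\cup\{a,b,c,d\}$, where $S_A$ generates $A$ and is identified with the copy of $A$ supported at the basepoint $\rho$. First I would write $A=\langle S_A\mid Q_A\mid \Phi_A\mid R_A\rangle$; lifting each relator of $A$ into $W$ gives relators $Q_A$ and the iterated family $\Phi_A^*(R_A)$, provided the endomorphisms $\phi\in\Phi_A$ extend to endomorphisms of the free group on $S$ by fixing $a,b,c,d$. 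These relators assert that the basepoint copy of $A$ really is a quotient of $A$. The relators from $\Grig$'s $L$-presentation, namely $a^2,bcd$ and the $\sigma$-orbits of $[d,d^a],[d,d^{(ac)^2a}]$, assert that the top group is $\Grig$; here the ambient endomorphism is again $\sigma$ extended to fix $S_A$.

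The genuinely new content is to force $\sum_X A$ to be the \emph{direct sum} of copies of $A$ indexed by $X$, with $G$ acting by permuting them according to the Schreier action on $X=\rho\Grig$. The commutativity $[\sum_X A$ is a direct sum$]$ is captured by demanding that the basepoint copy of $A$ commutes with each of its conjugates by elements of $\Grig$ that move $\rho$ — but there are infinitely many such conjugates. This is where the $L$-presentation mechanism does its work: by Lemma~\ref{lem:orbits2} the nontrivial double cosets $PgP$ of the stabilizer $P=(\Grig)_\rho$ are represented by the set $T=\{\sigma^n(a)\mid n\in\N\}$, and by Lemma~\ref{lem:genP} the stabilizer $P$ itself is generated by the finite $\sigma$-orbit family $U=\bigcup_n\sigma^n\{b,c,d,d^a,(ac)^4,\sigma([a,b])^a,\sigma^2([a,b])^a\}$. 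So I would take as ``base relators'' the finite set $R'=\{[s_1,s_2^{\,\sigma^0(a)}]\mid s_1,s_2\in S_A\}\cup\{[s,u]\mid s\in S_A,\ u\in U_0\}$, where $U_0$ is the finite seed set for $U$, together with whatever finitely many relators are needed to say that $P$ fixes the basepoint copy (i.e.\ $[s,u]$ for $u$ in the seed of $U$) — and then close $R'$ under the single endomorphism $\sigma$ (extended to $S$ by fixing $S_A$). Applying $\sigma^n$ to $[s_1,s_2^a]$ yields $[s_1,s_2^{\sigma^n(a)}]$, which says the basepoint copy commutes with its $\sigma^n(a)$-conjugate; applying $\sigma^n$ to $[s,u]$ says that $\sigma^n(u)\in P$ fixes the basepoint copy, and as $n$ and the seed element range, these $\sigma^n(u)$ generate all of $P$. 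Combined, these relators force: (i) $P$ fixes the copy of $A$ at $\rho$, so that copy really is indexed by the point $\rho$; and (ii) the copy at $\rho$ commutes with each copy at $\rho t$ for $t\in T$, hence — using transitivity of $\Grig$ on $X$ and conjugating — with every other copy. Thus the normal closure of the basepoint $A$ is exactly $\sum_X A$ and the extension is the wreath product.

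The main obstacle I anticipate is \textbf{completeness}: verifying that the finitely many relators listed, together with their $\sigma$-orbits, are \emph{sufficient} — i.e.\ that the group they present is not larger than $W$. One direction (every listed relator holds in $W$) is a routine check using the wreath-recursion formulas and the fact, from Lemma~\ref{lem:orbits2}, that $\sigma^n(a)$ moves $\rho$ so that the basepoint copies at $\rho$ and $\rho\sigma^n(a)$ have disjoint support in $\sum_X A$. For the converse I would argue that, modulo the $\Grig$-relators, the base normal subgroup $N$ (normal closure of $S_A$) is generated by conjugates $s^g$ for $g\in\Grig$, $s\in S_A$; two such conjugates $s_1^{g_1},s_2^{g_2}$ with $\rho g_1\neq\rho g_2$ commute because $g_2 g_1^{-1}\notin P$, so $g_2g_1^{-1}$ lies in some double coset $P\sigma^n(a)P$, and the relators $[s_1,s_2^{\sigma^n(a)}]$ together with $P$-invariance of the basepoint copy (from the $[s,u]$ relators) propagate commutativity along the double coset — this reduces to a double-coset bookkeeping argument essentially identical to the proof of Lemma~\ref{lem:kerxi}. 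Finally, that $N$ surjects onto $\sum_X A$ with the correct per-coordinate relations follows because the only relations among the $s^g$ that our relators impose within a single coordinate are the defining relations of $A$ (the $Q_A$ and $\Phi_A^*(R_A)$ relators, transported by conjugation), and $\Grig$ permuting the coordinates is exactly the Schreier action. Assembling, $W=\langle S\mid Q\mid \Phi\mid R\rangle$ with $\Phi=\{\sigma\}\cup\Phi_A$ (all extended to $S$ appropriately), $Q=Q_A\cup\{a^2,bcd\}$, and $R=R_A\cup\{[d,d^a],[d,d^{(ac)^2a}]\}\cup R'$, which is a finite $L$-presentation.
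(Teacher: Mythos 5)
Your proposal is correct and follows essentially the same route as the paper: combine the $L$-presentations of $A$ and $\Grig$, and express the infinitely many commutativity relations needed for the base of the wreath product (commutators of $S_A$ with generators of $P$ and with double-coset representatives) as $\sigma$-orbits of a finite seed, using Lemmata~\ref{lem:orbits2} and~\ref{lem:genP}. The only difference is expositional: the paper invokes Cornulier's characterization of finitely presented permutational wreath products (\cite{cornulier:fpwreath}) to obtain the presentation --- and hence its completeness --- in one step, whereas you sketch a direct derivation of the same commutator relators and the double-coset bookkeeping that Cornulier's theorem encapsulates.
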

\begin{proof}
  Cornulier characterizes in~\cite{cornulier:fpwreath} when
  permutational wreath products are finitely presented. A
  permutational wreath product $A\wr_X G$, for $G$ acting transitively
  on a transitive $G$-set $X=P\backslash G$, is presented as follows:
  as generators, take those of $A$ and $G$. As relations, take: those
  of $A$ and $G$; the relation $[a,u]$ for every generator $a$ of $A$
  and every $u$ in a generating set $U$ of $P$; and the relations
  $[a,b^t]$ for every generators $a,b$ of $A$ and $q$ in a set of
  double coset representatives of $P$ in $G$; namely $t\in T$ with
  $G=P\sqcup\bigsqcup_{g\in T} PgP$.

  In the case of the first Grigorchuk group, a generating set for
  $P=(\Grig)_\rho$, and a set of double coset representatives, have
  been computed in~\cite{bartholdi-g:parabolic}; see
  Lemmata~\ref{lem:orbits2} and~\ref{lem:genP}.  Let $\langle
  S|Q|\Phi|R\rangle$ be a finite $L$-presentation of $A$. A finite
  $L$-presentation for $A\wr_X\Grig$ is then $\langle
  S,a,b,c,d|Q|\Phi'\cup\{\sigma'\}|R\cup R'\rangle$, with
  \begin{align*}
    \Phi' & \text{ the endomorphisms in $\Phi$, extended by fixing $a,b,c,d$;}\\
    \sigma' &= \sigma \text{ on }\{a,b,c,d\},\text{ and fixing }S;\\
    R' &= \{[s,b],[s,d^a],[s,(ac)^4],[s,\sigma([a,b])^a],[s,\sigma^2([a,b])^a]\mid s\in S\}\\
    & \qquad{}\cup\{[s',s^a]\mid s,s'\in S\}.\qedhere
  \end{align*}
\end{proof}

\begin{corollary}
  The groups $K_k$ from Theorem~\ref{thm:K} and $L_k$ from
  Theorem~\ref{thm:L} are finitely L-presented.\qed
\end{corollary}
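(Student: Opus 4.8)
The plan is to proceed by induction on $k$, using the preceding Proposition as the inductive step. The base cases are handled by the observation that every finitely presented group is finitely $L$-presented: given a finite presentation $\langle S\mid Q\rangle$, one may take $\Phi=\emptyset$ (so that $\Phi^*$ consists of the identity endomorphism alone) and $R=\emptyset$, yielding the finite $L$-presentation $\langle S\mid Q\mid\emptyset\mid\emptyset\rangle$. In particular $K_0=\Z/2\Z$ and $L_0=\Z$, being finitely presented, are finitely $L$-presented.

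Next I would record that $\Grig$ is finitely $L$-presented, which has already been stated in the excerpt via the Lysionok-type $L$-presentation $\langle a,b,c,d\mid\mid\sigma\mid a^2,bcd,[d,d^a],[d,d^{(ac)^2a}]\rangle$. This is what makes the Proposition applicable with $G=\Grig$.

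The inductive step is then immediate: assuming $K_k$ is finitely $L$-presented, the Proposition applied with $A=K_k$ shows that $K_{k+1}=K_k\wr_X\Grig$ is finitely $L$-presented. The same argument with $A=L_k$ shows $L_{k+1}=L_k\wr_X\Grig$ is finitely $L$-presented. Hence all $K_k$ and all $L_k$ are finitely $L$-presented.

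There is essentially no obstacle here; the only point requiring a word of care is the base case, namely spelling out that a finite presentation is a (degenerate) finite $L$-presentation, and that the Proposition's hypothesis is met since $\Grig$ itself is finitely $L$-presented with iterating endomorphism $\sigma$. Everything else is a one-line invocation of the Proposition, which is why the statement can be discharged with \qed.
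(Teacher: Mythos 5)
Your proof is correct and matches what the paper intends, which is why the corollary is stated with a bare \qed: the base cases $K_0=\Z/2\Z$ and $L_0=\Z$ are finitely presented (hence finitely $L$-presented with $\Phi=R=\emptyset$), and the preceding Proposition supplies the inductive step. Nothing more is needed.
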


\begin{example}
  A recursive presentation for the group $K_1=\Z/2\Z\wr_X\Grig$ is
  \[K_1=\langle a,b,c,d,s\mid a^2,b^2,c^2,d^2,s^2,bcd,
  \sigma^n(r_1),\dots,\sigma^n(r_8)\text{ for all }n\in\N\rangle
  \]
  for $\sigma$ the same endomorphism as in~\eqref{eq:sigma}, extended by
  $\sigma(s)=s$, and iterated relations
  \begin{xalignat*}{2}
    r_1&=[d,d^a], &
    r_2&=[d,d^{(ac)^2a}],\\
    r_3&=[s,s^a], &
    r_4&=[s,b],\\
    r_5&=[s,d^a], &
    r_6&=[s,(ac)^4],\\
    r_7&=[s,\sigma([a,b])^a], &
    r_8&=[s,\sigma^2([a,b])^a].
  \end{xalignat*}
\end{example}

%%%%%%%%%%%%%%%%%%%%%%%%%%%%%%%%%%%%%%%%%%%%%%%%%%%%%%%%%%%%%%%% 
\section{Embeddings of the group of finitely supported permutations}

%For the groups of intermediate growth that we constructed above, it is
%possible to construct a random walk with non-trivial boundary. Indeed
%consider a random walk on $\Grig$ such that the induced random walk on
%$X$ is transient; the configurations in $A^X$ will then stabilize,
%defining a partition of the boundary. Yet, really new types of
%measures (although degenerate), also with non-trivial boundary, arise
%via the following result:

\begin{theorem}\label{thm:embedSinfty}
  There exists a group $H$ of intermediate growth that contains as a
  subgroup the group $\mathfrak S_\infty$ of finitely supported
  permutations of an infinite countable set.

  Moreover, the group $H$ can be chosen in such a way that its growth
  function satisfies
  \[\exp(n^\alpha)\precsim v_H(n)\precsim\exp(\log(n)n^\alpha).\]
\end{theorem}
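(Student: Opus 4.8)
The plan is to realize $\mathfrak S_\infty$ as a base group in a permutational wreath product over the Grigorchuk action, mimicking the construction of $K_1 = (\Z/2\Z)\wr_X\Grig$ but with the \emph{infinite} symmetric group replacing the finite one. Concretely, I would set $A = \mathfrak S_\infty$, the group of finitely supported permutations of $\N$ (or $\Z$), generated — not finitely, but as a group — by its transpositions, and form $W = A\wr_X\Grig$ where $X = \rho\Grig$ is the Schreier orbit of the ray $\rho=\mathsf1^\infty$. Since $\mathfrak S_\infty$ is locally finite, every finitely generated subgroup of $W$ is an extension of a finite group by (a subgroup of) $\Grig$, hence of subexponential growth; so $W$ is locally of subexponential growth. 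The issue is that $W$ itself is \emph{not} finitely generated, because $A$ is not. The fix is to choose a single, carefully selected element $t\in A$ together with an element of $\Grig$ (or an auxiliary self-similar element) so that the conjugates of $t$ under $\Grig$, acting through the permutation of $X$, generate all of $\sum_X\mathfrak S_\infty$; more precisely, one wants the $\Grig$-action on $X$ to move $\rho$ to every other point, so that a transposition $t=(\rho,\rho g)$ placed at the basepoint generates, under translation by $\Grig$, enough transpositions to build all finitely supported permutations in the base. Because $\Grig$ acts transitively on $X$, conjugating a fixed transposition $t$ by elements of $\Grig$ yields transpositions $(\rho h,\rho g h)$ for all $h$; these generate $\mathfrak S_\infty$ inside the copy of $A$ sitting over any single coordinate, and hence the group generated by $\{a,b,c,d,t\}$ contains $\sum_X A \rtimes\Grig$. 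I would define $H$ to be exactly this finitely generated subgroup $\langle a,b,c,d,t\rangle$ of $W$, and check that it equals $W$.

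Next I would verify that $\mathfrak S_\infty$ embeds as a \emph{normal} subgroup (matching Theorem~\ref{thm:C}): the point is that $\sum_X A$ is normal in $W$, and one exhibits a diagonally-embedded — or rather a single-coordinate — copy of $\mathfrak S_\infty$ inside $\sum_X A$ whose normal closure is all of $\sum_X A$; since $A=\mathfrak S_\infty$ is simple, any single coordinate copy $\{1\}^{X\setminus\{x_0\}}\times A$ has normal closure equal to $\sum_X A$, but that coordinate copy itself is not normal. Instead, one takes the subgroup $D$ of $\sum_X A$ consisting of those configurations supported on $X$ that are ``constant along the $\Grig$-action in a suitable sense''; cleaner is simply to note that $\sum_X\mathfrak S_\infty$ is itself isomorphic to $\mathfrak S_\infty$ of the countable set $X\times\N$ (a countable disjoint union of countable sets is countable), and it is normal in $W=H$. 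This gives the normal embedding of $\mathfrak S_\infty$ claimed in Theorem~\ref{thm:C}, and in particular $H$ contains the infinite simple group $\mathfrak A_\infty$ (alternating), or $\mathfrak S_\infty$ is itself the required infinite simple-ish subgroup.

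For the growth bounds I would run the argument of Lemma~\ref{lem:interm} and Proposition~\ref{prop:tf} almost verbatim. For the \emph{upper} bound $v_H(n)\precsim\exp(\log(n)n^\alpha)$: a word of length $n$ in $H=A\wr_X\Grig$ has a $\Grig$-component of norm $\le n$, contributing $\precsim\exp(n^\alpha)$ choices by~\eqref{eq:grigupper}; its base configuration $f:X\to\mathfrak S_\infty$ is supported on at most $\delta(w)\precsim n^\alpha$ points (Proposition~\ref{main}), with $\precsim\exp(n^\alpha)$ choices for the support (Lemma~\ref{support}); and the values $a_i\in\mathfrak S_\infty$ have total length $\le n$ with at most $n^\alpha$ summands, so they contribute $\precsim\binom{n}{n^\alpha}\sim\exp(\log(n)n^\alpha)$ choices for the composition times $\prod v_{\mathfrak S_\infty}(\ell_i)$; since $\mathfrak S_\infty$ has at most exponential growth $v_{\mathfrak S_\infty}(\ell)\le\exp(C\ell)$ but \emph{also} (crucially) each $\ell_i$-ball in $\mathfrak S_\infty$ with respect to the transposition generating set has size $\le (\ell_i)!\cdot(\text{polynomial})$... here one must be a little careful and use the bound $v_{\mathfrak S_\infty}(\ell)\le \exp(O(\ell\log\ell))$, which would give $\exp(n\log n)$ rather than $\exp(n^\alpha\log n)$ — so instead one uses that $\sum\ell_i\le n$ forces $\prod v_{\mathfrak S_\infty}(\ell_i)\le\exp(O(n\log n))$, which is too big. \textbf{This is the main obstacle}, and the resolution is to choose the generating set of $H$ so that $\mathfrak S_\infty$ contributes only \emph{polynomially} per coordinate: one does \emph{not} throw in a transposition at $\rho$ directly, but rather realizes $\mathfrak S_\infty$ \emph{spread across the orbit} $X$, so that an element of norm $n$ can only involve permutations moving $\precsim n^\alpha$ of the points of $X$ and these permutations, being built from the restricted generating data, have ``length'' controlled so that the total count is $\exp(O(n^\alpha\log n))$. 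Equivalently, model $H$ on the torsion-free group $H$ of Proposition~\ref{prop:tf} with $\Z^3$ replaced by $\mathfrak S_\infty$ realized via germs: a word of length $n$ produces a configuration supported on $n^\alpha$ points, each germ being a permutation that is a product of $\le n$ generator-insertions, but the $n$ insertions are \emph{distributed} among the $n^\alpha$ support points, giving $\prod(\text{(number of permutations of length }\ell_i)) \le \prod\exp(O(\ell_i\log\ell_i))$ with $\sum\ell_i\le n$ — still potentially $\exp(n\log n)$ unless one additionally observes that each germ of length $\ell$ lies in $\mathfrak S_{\{1,\dots,\ell\}}$ and so there are at most $\ell!\le\exp(\ell\log\ell)$ of them, and the worst case $\ell_1 = n$ gives exactly $\exp(n\log n)$. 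Therefore the correct construction must cap each germ's length at $n^{1-\alpha}$ is impossible; instead one caps the \emph{number of transpositions available} — take only transpositions $(\rho,\rho\sigma^k(a))$ for $k\in\N$, i.e.\ one generator in each double-coset (Lemma~\ref{lem:orbits2}), so that a word of length $n$ can insert at most $n$ such transpositions but they all have bounded support-per-insertion, yielding a configuration determined by at most $n^\alpha$ support points and, at each, a permutation which is a product of at most $n$ transpositions from a set of size $O(n)$, hence lying in a set of size $\le\exp(O(n^\alpha\log n))$ total by the composition-counting of Proposition~\ref{prop:tf}. I would therefore follow Proposition~\ref{prop:tf}'s template precisely, replacing $C\cong\Z^3$ by the subgroup of $\mathfrak S_\infty$ generated by the chosen transpositions, and conclude $v_H(n)\precsim\exp(\log(n)n^\alpha)$.

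For the \emph{lower} bound $v_H(n)\succsim\exp(n^\alpha)$: take, as in Proposition~\ref{substitution}, a word $w$ of length $\asymp n$ with $\delta(w)\asymp n^\alpha$, so its inverted orbit $\mathcal O(w)=\{x_1,\dots,x_k\}$ has $k\asymp n^\alpha$ distinct points; for each subset $I\subseteq\{1,\dots,k\}$ insert a fixed transposition (of $A$) into $w$ just before the relevant suffix position for each $i\in I$, producing $2^k\sim\exp(n^\alpha)$ distinct configurations, all represented by words of length $O(n)$. These are pairwise distinct in $H$ because their base components differ at the germ/coordinate over $x_i$. This gives $v_H(n)\succsim\exp(n^\alpha)$, completing the sandwich. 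The torsion status of $H$ is irrelevant to the statement; intermediate growth follows since $\exp(n^\alpha)\preceq v_H\preceq\exp(\log(n)n^\alpha)$ lies strictly between polynomial and exponential. The one genuinely delicate point, as flagged, is arranging the generating set so that the per-coordinate contribution of $\mathfrak S_\infty$ in the upper-bound count is subexponential in $n$ — handled by importing the double-coset/germ bookkeeping of Lemmata~\ref{lem:orbits2}–\ref{lem:genP} and Proposition~\ref{prop:tf} rather than the naive wreath-product estimate.
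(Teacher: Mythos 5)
Your construction starts from the wrong group, and this is the root of the trouble you run into. You form the permutational wreath product $W = \mathfrak S_\infty\wr_X\Grig = \bigl(\sum_X\mathfrak S_\infty\bigr)\rtimes\Grig$, in which each coordinate $x\in X$ carries its \emph{own} copy of a finitary symmetric group. The paper's group $H$ is not a wreath product at all: it is the semidirect product $\mathfrak S_\infty(X)\rtimes\Grig$, where $\mathfrak S_\infty(X)$ is the single group of finitely supported permutations \emph{of the orbit $X$ itself}, and $\Grig$ acts by conjugation through its action on $X$. This is exactly the distinction the paper draws in its title and introduction between permutational wreath products and the more general ``permutational extensions.'' Your proposal conflates the two: when you speak of a transposition $t=(\rho,\rho g)$ ``placed at the basepoint,'' you are describing an element of $\mathfrak S_\infty(X)$, not of $\sum_X\mathfrak S_\infty$, yet your upper-bound analysis is carried out for the wreath product.

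The gap this creates is precisely the one you flag. In the wreath product, the base configuration $f:X\to\mathfrak S_\infty$ assigns to each of $\precsim n^\alpha$ support points a permutation of potentially unbounded length, and your own accounting shows the contribution can reach $\exp(n\log n)$. Your attempted fixes (restricting to transpositions indexed by double cosets, importing germ bookkeeping from Proposition~\ref{prop:tf}) never resolve this cleanly, because they do not change the underlying group. In the paper's $H=\mathfrak S_\infty(X)\rtimes\Grig$, generated by $\{a,b,c,d\}$ together with the single transposition $s$ switching $\rho$ and $\mathsf0\rho$, a word of length $\le 2n$ has a permutation part which is a product of conjugates $s^{g_i}$, i.e.\ transpositions of the points $\rho g_i$ and $(\mathsf0\rho)g_i$. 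By Proposition~\ref{main} (and Lemma~\ref{le:indepbasepoint} for the second basepoint), each of these two families of points ranges over an inverted orbit of size $\precsim n^\alpha$; so the whole permutation is supported on a set of size $\precsim n^\alpha$, and is one of at most $(n^\alpha)!\sim\exp(\alpha\,n^\alpha\log n)$ permutations of that set. This is the estimate that makes the upper bound close, and it is available only because there is one symmetric group on $X$, not a direct sum of them. Your lower-bound sketch is essentially the paper's and would go through once the group is fixed; but as written the proposal does not establish the upper bound, since it never identifies the correct extension.
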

\begin{proof}
  If $G$ acts on a set $X$, then it acts on the group of finitely
  supported permutations of $X$: for $\sigma: X \to X$ with
  $\sigma(x)=x$ for all $x\in X$ except finitely many, $g^{-1}\sigma
  g$ is still finitely supported for all $g\in G$.

  Let $X$ denote the orbit of the ray $\rho=\mathsf1^\infty$ under the
  action of the first Grigorchuk group $\Grig$, and let $\mathfrak
  S_\infty$ denote the group of finitely supported permutations of
  $X$. Set $H=\mathfrak S_\infty\rtimes\Grig$.  Take as generating set
  for $H$ the union of the generating set $\{a,b,c,d\}$ of $G$ with
  the involution $s$ switching $\mathsf0\rho$ and $\rho$.

  Consider an element $g=s^{e_1}h_1\dots s^{e_n}\dots h_n$ of length
  $\le2n$ in this group, for some $h_i\in\{a,b,c,d\}$ and
  $e_i\in\{0,1\}$, and write $g_i=h_ih_{i+1}\dots h_n$.  Observe that
  the finitely supported permutation corresponding to this element is obtained
  as a product of some of the involutions switching
  $(\mathsf0\rho)g_i$ and $\rho g_i$. By Proposition~\ref{main}, there
  are $\precsim n^\alpha$ possibilities for the first, and in view of
  Lemma \ref{le:indepbasepoint} also for the second of these two
  points;
%cite lemma about the independence on the choice of the orbits point??  
  so the support of the permutation has
  cardinality $\precsim n^\alpha$. An element of $H$ of length
  $\sim n$ may therefore be described by: an element of $\Grig$ of
  length $\precsim n$; a subset of $X$ of cardinality $\precsim
  n^\alpha$; and a permutation of that subset. There are at most,
  respectively, $v_\Grig(n)$, $\binom{n}{n^\alpha}$, and $(n^\alpha)!$
  choices for each of these pieces of data.  We get
  \[v_H(n) \precsim \exp(n^\alpha) \exp(\log(n)n^\alpha)(n^\alpha)!\sim\exp(\log(n)n^\alpha).
  \]
  On the other hand, consider the word $w_n=g_1\dots g_\ell$ given by
  Proposition~\ref{substitution}. Set inductively $S_0=\emptyset$ and
  $S_k=S_{k-1}\cup\{\rho g_k\dots g\ell,\mathsf0\rho g_k\dots
  g_\ell\}$; and select positions $k_1,k_2,\dots,k_{2^n}$ such that
  $S_k\neq S_{k-1}$. Consider then the $2^{2^n}$ elements of $H$
  obtained by inserting, at each position $k_i$ in $w_n$, the word
  $s^{e_i}$ for all choices of $e_i\in\{0,1\}$.

  An easy induction shows that all these elements are distinct in $H$:
  given such an element, expressed as $\sigma g$ with
  $\sigma\in\mathfrak S_\infty$ and $g=w_n\in\Grig$, we recover the
  $e_i$ as follows. Let $x\in X$ be in the support of $\sigma$, and in
  $S_{k_i}\setminus S_{k_i-1}$ for maximal $k_i$. This determines
  $e_i=1$. Right-divide then by $s^{e_i}g_{k_i+1}\dots g_\ell$, and
  proceed inductively. All other $e_j$'s are $0$.

  This gives $v_H(2|w_n|)\ge2^{2^n}$, proving the lower bound.
\end{proof}

The first examples of groups of intermediate growth that are not
residually finite are constructed
in~\cite{erschler:nonrf}. Theorem~\ref{thm:embedSinfty} gives new
examples of this kind:
\begin{corollary}
  There exist finitely generated groups of growth
  $\precsim\exp(\log(n)n^\alpha)$ that contain an infinite simple
  group as a (normal) subgroup. In particular, such groups provide new
  examples of non residually finite groups of intermediate growth.
\end{corollary}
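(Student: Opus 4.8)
The final statement is the Corollary asserting the existence of finitely generated groups of growth $\precsim\exp(\log(n)n^\alpha)$ containing an infinite simple group as a normal subgroup, hence providing new examples of non residually finite groups of intermediate growth.

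The plan is to derive everything from Theorem~\ref{thm:embedSinfty}, which already produces a finitely generated group $H$ of intermediate growth with $\exp(n^\alpha)\precsim v_H(n)\precsim\exp(\log(n)n^\alpha)$ and containing $\mathfrak S_\infty$, the group of finitely supported permutations of the countably infinite set $X=\rho\Grig$, as a subgroup. First I would observe that in the construction $H=\mathfrak S_\infty\rtimes\Grig$ the subgroup $\mathfrak S_\infty$ is precisely the base of the semidirect product, hence is normal in $H$ by construction; so the parenthetical ``(normal)'' is immediate. Next I would recall the classical fact that the group of finitely supported \emph{even} permutations of an infinite set — the infinite alternating group $\mathfrak A_\infty=\bigcup_n\mathfrak A_n$ — is an infinite simple group. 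Since $\mathfrak A_\infty$ is a characteristic subgroup of $\mathfrak S_\infty$ (it is the commutator subgroup, or the subgroup of index $2$), and $\mathfrak S_\infty$ is normal in $H$, it follows that $\mathfrak A_\infty$ is itself normal in $H$. This yields the first assertion: $H$ is a finitely generated group of growth $\precsim\exp(\log(n)n^\alpha)$ containing the infinite simple group $\mathfrak A_\infty$ as a normal subgroup. (Alternatively one can just invoke that $\mathfrak S_\infty$ contains $\mathfrak A_\infty$ as a subgroup and weaken ``normal'' accordingly, but the characteristic-subgroup observation keeps normality.)

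For the residual finiteness claim, I would argue as follows. A residually finite group cannot contain a nontrivial normal subgroup $N$ such that every finite quotient kills $N$; more precisely, if $G$ is residually finite and $N\trianglelefteq G$ is an infinite simple normal subgroup, then taking any finite-index normal subgroup $M$ of $G$ with $M\ne G$, the subgroup $N\cap M$ is normal in $N$, hence is either trivial or all of $N$; it cannot be all of $N$ since $N$ is infinite and $N/(N\cap M)\hookrightarrow G/M$ is finite; so $N\cap M=1$, which means $N$ embeds in $G/M$ and is therefore finite, a contradiction. Thus no finitely generated group containing an infinite simple normal subgroup can be residually finite. Applying this to $H$ with $N=\mathfrak A_\infty$ shows $H$ is not residually finite, while it has intermediate growth by Theorem~\ref{thm:embedSinfty}; this gives the new examples of non residually finite groups of intermediate growth, complementing~\cite{erschler:nonrf}.

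I do not anticipate a serious obstacle here: the corollary is essentially a repackaging of Theorem~\ref{thm:embedSinfty} together with two standard facts (simplicity of $\mathfrak A_\infty$ and the incompatibility of residual finiteness with an infinite simple normal subgroup). The only point requiring a line of care is making sure the infinite simple subgroup one exhibits is still \emph{normal} in $H$ — which is why I would pass to the characteristic subgroup $\mathfrak A_\infty$ of the normal subgroup $\mathfrak S_\infty$ rather than to an arbitrary simple subgroup; and the only quantitative input is the upper bound $v_H(n)\precsim\exp(\log(n)n^\alpha)$, which is exactly what Theorem~\ref{thm:embedSinfty} provides.
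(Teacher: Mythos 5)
Your approach matches the paper's exactly: the paper's entire proof is the observation that the group of finitely supported even permutations is a characteristic subgroup of $\mathfrak S_\infty$ and is infinite simple, from which normality in $H$ and non-residual-finiteness follow. Your elaboration of why residual finiteness is incompatible with an infinite simple normal subgroup has a small logical slip, though: you claim that for an \emph{arbitrary} proper finite-index normal $M\trianglelefteq G$ one has $N\cap M\ne N$ ``since $N$ is infinite and $N/(N\cap M)\hookrightarrow G/M$ is finite,'' but if $N\cap M=N$ then $N/(N\cap M)$ is trivial and no contradiction arises. The correct step is to use residual finiteness to \emph{choose} $M$ avoiding a fixed nontrivial element of $N$, which forces $N\not\subseteq M$; then simplicity gives $N\cap M=1$, and $N\hookrightarrow G/M$ yields the contradiction with $N$ infinite. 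With that fix the argument is complete and coincides with what the paper leaves to the reader.
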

\begin{proof}
  We recall that the group of even permutations is a characteristic
  subgroup of the group $\mathfrak S_\infty$ of finitely supported
  permutations, and is simple.
\end{proof}

\begin{remark}
  It is shown in~\cite{erschler:boundarysubexp} that a there exist
  groups of intermediate growth which admit a non-degenerate measure
  with non-trivial Poisson-Furstenberg boundary.  Kaimanovich shows
  in~\cite{kaimanovich:ntexit} that the group of finitely supported
  permutations of a countable set admits a symmetric measure with
  non-trivial boundary. This provides an example of a measure with
  non-trivial boundary on the group $H$ from
  Theorem~\ref{thm:embedSinfty}. However, the measures obtained in
  this way are degenerate.

  It can be shown that the group $H$ considered in the proof of
  Theorem~\ref{thm:embedSinfty}, as well as other groups constructed
  in this paper, also admit non-degenerate measures with non-trivial
  Poisson-Furstenberg boundary. We will study random walks on
  permutational extensions, not restricted to those considered in this
  paper, in~\cite{bartholdi-e:rwperm}.  Some of the groups that
  we will be treated in~\cite{bartholdi-e:rwperm} lead to new
  phenomena in boundary behavior.
\end{remark}

\begin{remark}
  After this paper was completed, further developments on the growth
  of groups appeared in the
  preprints~\cites{brieussel:growth,kassabov-pak:growth,bartholdi-erschler:givengrowth}.
\end{remark}

\iffalse
\begin{remark}
  In the recent preprint~\cite{brieussel:growth}, Brieussel uses
  permutational wreath products to show that, for every $\alpha<\beta$
  in $(\log(2)/\log(2/\eta),1)$, there exists a group whose growth
  function $v(n)$ satisfies $\liminf\log\log v(n)/\log n=\alpha$ and
  $\limsup\log\log v(n)/\log n=\beta$.
\end{remark}
\fi

%%%%%%%%%%%%%%%%%%%%%%%%%%%%%%%%%%%%%%%%%%%%%%%%%%%%%%%%%%%%%%%%
\begin{bibdiv}
\begin{biblist}
\font\cyreight=wncyr8
\bib{bartholdi:upperbd}{article}{
  author={Bartholdi, Laurent},
  title={The growth of Grigorchuk's torsion group},
  journal={Internat. Math. Res. Notices},
  date={1998},
  number={20},
  pages={1049--1054},
  issn={1073-7928},
  review={\MR {1656258 (99i:20049)}},
  eprint={arXiv.org/abs/math/0012108},
}

\bib{bartholdi:lowerbd}{article}{
  author={Bartholdi, Laurent},
  title={Lower bounds on the growth of a group acting on the binary rooted tree},
  journal={Internat. J. Algebra Comput.},
  volume={11},
  date={2001},
  number={1},
  pages={73--88},
  issn={0218-1967},
  review={\MR {1818662 (2001m:20044)}},
  eprint={arXiv.org/abs/math/9910068},
}

\bib{bartholdi:lpres}{article}{
  author={Bartholdi, Laurent},
  title={Endomorphic presentations of branch groups},
  journal={J. Algebra},
  volume={268},
  date={2003},
  number={2},
  pages={419--443},
  issn={0021-8693},
  review={\MR {2009317 (2004h:20044)}},
  eprint={arXiv.org/abs/math/0007062},
}

\bib{bartholdi-e:rwperm}{article}{
    author={Bartholdi, Laurent},
    author={Erschler, Anna G.},
     title={Poisson-Furstenberg boundary and growth of groups},
      date={2011},
    status={preprint},
    eprint={arXiv:math/1107.5499},
}

\bib{bartholdi-erschler:givengrowth}{article}{
    author={Bartholdi, Laurent},
    author={Erschler, Anna G.},
     title={Groups of given intermediate word growth},
      date={2011},
    status={preprint}, % 
    eprint={arXiv:math/1110.3650},
}

\bib{bartholdi-g:parabolic}{article}{
  author={Bartholdi, Laurent},
  author={Grigorchuk, Rostislav I.},
  title={On parabolic subgroups and Hecke algebras of some fractal groups},
  journal={Serdica Math. J.},
  volume={28},
  date={2002},
  number={1},
  pages={47--90},
  issn={1310-6600},
  review={\MR {1899368 (2003c:20027)}},
  eprint={arXiv.org/abs/math/9911206},
}

\bib{bartholdi-r-s:interm2x2}{article}{
   author={Bartholdi, Laurent},
   author={Reznykov, Illya I.},
   author={Sushchansky, Vitaly I.},
   title={The smallest Mealy automaton of intermediate growth},
   journal={J. Algebra},
   volume={295},
   date={2006},
   number={2},
   pages={387--414},
   issn={0021-8693},
   review={\MR{2194959 (2006i:68060)}},
   eprint={arXiv.org/abs/math/0407312},
}

\bib{bass:nilpotent}{article}{
    author={Bass, Hyman},
     title={The degree of polynomial growth of finitely generated nilpotent
  groups},
      date={1972},
   journal={Proc. London Math. Soc. (3)},
    volume={25},
     pages={603\ndash 614},
}

\bib{brieussel:growth}{article}{
   author={Brieussel, J\'er\'emie},
   title={Growth behaviours in the range $e^{(r^\alpha)}$},
   date={2011},
   eprint={arXiv:math/1107.1632},
}

\bib{cornulier:fpwreath}{article}{
  author={de Cornulier, Yves},
  title={Finitely presented wreath products and double coset decompositions},
  journal={Geom. Dedicata},
  volume={122},
  date={2006},
  pages={89--108},
  issn={0046-5755},
  review={\MR {2295543 (2008e:20040)}},
  doi={10.1007/s10711-006-9061-4},
}

\bib{erschler:boundarysubexp}{article}{
  author={Erschler, Anna G.},
  title={Boundary behavior for groups of subexponential growth},
  journal={Ann. of Math. (2)},
  volume={160},
  date={2004},
  number={3},
  pages={1183\ndash 1210},
  issn={0003-486X},
  review={MR2144977},
}

\bib{erschler:nonrf}{article}{
  author={Erschler, Anna G.},
  title={Not residually finite groups of intermediate growth, commensurability and non-geometricity},
  journal={J. Algebra},
  volume={272},
  date={2004},
  number={1},
  pages={154\ndash 172},
  issn={0021-8693},
  review={MR2029029 (2004j:20066)},
}

\bib{erschler:degrees}{article}{
  author={Ershler, Anna G.},
  title={On the degrees of growth of finitely generated groups},
  language={Russian},
  journal={Funktsional. Anal. i Prilozhen.},
  volume={39},
  date={2005},
  number={4},
  pages={86--89},
  issn={0374-1990},
  translation={ journal={Funct. Anal. Appl.}, volume={39}, date={2005}, number={4}, pages={317--320}, issn={0016-2663}, },
  review={\MR {2197519 (2006k:20056)}},
  doi={10.1007/s10688-005-0055-z},
}

\bib{MR0318199}{article}{
   author={Govorov, V. E.},
   title={Graded algebras},
   language={Russian},
   journal={Mat. Zametki},
   volume={12},
   date={1972},
   pages={197--204},
   issn={0025-567X},
   review={\MR{0318199 (47 \#6746)}},
}

\bib{grigorchuk:burnside}{article}{
  author={Grigorchuk, Rostislav~I.},
  title={On Burnside's problem on periodic groups},
  date={1980},
  issn={0374-1990},
  journal={{\cyreight Funktsional. Anal. i Prilozhen.}},
  volume={14},
  number={1},
  pages={53\ndash 54},
  note={English translation: {Functional Anal. Appl. \textbf {14} (1980), 41--43}},
  review={\MRhref {81m:20045}},
}

\bib{grigorchuk:growth}{article}{
  author={Grigorchuk, Rostislav~I.},
  title={On the Milnor problem of group growth},
  date={1983},
  issn={0002-3264},
  journal={Dokl. Akad. Nauk SSSR},
  volume={271},
  number={1},
  pages={30\ndash 33},
  review={\MRhref {85g:20042}},
}

\bib{grigorchuk:gdegree}{article}{
  author={Grigorchuk, Rostislav~I.},
  title={Degrees of growth of finitely generated groups and the theory of invariant means},
  date={1984},
  issn={0373-2436},
  journal={{\cyreight Izv. Akad. Nauk SSSR Ser. Mat.}},
  volume={48},
  number={5},
  pages={939\ndash 985},
  note={English translation: {Math. USSR-Izv. \textbf {25} (1985), no.~2, 259--300}},
  review={\MRhref {86h:20041}},
}

\bib{grigorchuk:pgps}{article}{
  author={Grigorchuk, Rostislav~I.},
  title={Degrees of growth of $p$-groups and torsion-free groups},
  date={1985},
  issn={0368-8666},
  journal={Mat. Sb. (N.S.)},
  volume={126(168)},
  number={2},
  pages={194\ndash 214, 286},
  review={\MRhref {86m:20046}},
}

\bib{grigorchuk:kyoto}{inproceedings}{
  author={Grigorchuk, Rostislav~I.},
  title={On growth in group theory},
  date={1991},
  booktitle={Proceedings of the international congress of mathematicians, vol.\^^Mi, ii (kyoto, 1990)},
  publisher={Math. Soc. Japan},
  address={Tokyo},
  pages={325\ndash 338},
  review={\MRhref {93e:20001}},
}

\bib{grigorchuk:(un)solved}{article}{
  author={Grigorchuk, Rostislav~I.},
  title={Solved and unsolved problems around one group},
  conference={ title={Infinite groups: geometric, combinatorial and dynamical aspects}, },
  book={ series={Progr. Math.}, volume={248}, publisher={Birkh\"auser}, place={Basel}, },
  date={2005},
  pages={117--218},
  review={\MR {2195454 (2007d:20001)}},
  doi={10.1007/3-7643-7447-0\_5},
}

\bib{gromov:nilpotent}{article}{
  author={Gromov, Mikhael L.},
  title={Groups of polynomial growth and expanding maps},
  date={1981},
  issn={0073-8301},
  journal={Inst. Hautes {\'E}tudes Sci. Publ. Math.},
  number={53},
  pages={53\ndash 73},
}

\bib{harpe:ggt}{book}{
    author={Harpe, Pierre~{de la}},
     title={Topics in geometric group theory},
 publisher={University of Chicago Press},
   address={Chicago, IL},
      date={2000},
      ISBN={0-226-31719-6; 0-226-31721-8},
    review={\MRhref{2001i:20081}},
}

\bib{higman-n-n:embed}{article}{
  author={Higman, Graham},
  author={Neumann, B. H.},
  author={Neumann, Hanna},
  title={Embedding theorems for groups},
  journal={J. London Math. Soc.},
  volume={24},
  date={1949},
  pages={247--254},
  issn={0024-6107},
  review={\MR {0032641 (11,322d)}},
}

\bib{kaimanovich:ntexit}{article}{
  author={Ka{\u \i }manovich, Vadim A.},
  title={Examples of nonabelian discrete groups with nontrivial exit boundary},
  language={Russian, with English summary},
  note={Differential geometry, Lie groups and mechanics, V},
  journal={Zap. Nauchn. Sem. Leningrad. Otdel. Mat. Inst. Steklov. (LOMI)},
  volume={123},
  date={1983},
  pages={167--184},
  issn={0373-2703},
  review={\MR {697250 (85b:60008)}},
}

\bib{kassabov-pak:growth}{article}{
     title={Groups of oscillating intermediate growth},
    author={Martin Kassabov},
    author={Igor Pak},
    eprint={arXiv:math/1108.0262},
      date={2011}
}

\bib{krause:growth}{book}{
  author={Krause, Hans~Ulrich},
  title={Gruppenstruktur und Gruppenbild},
  publisher={Thesis},
  address={Eidgen\"ossische Technische Hochschule, Z\"urich},
  date={1953},
  review={\MRhref {15,99b}},
}

\bib{lavrik:interm}{article}{
    author={Lavrik-M\"annlin, Alla~A.},
     title={On some semigroups of intermediate growth},
      date={2001},
   journal={Internat. J. Algebra Comput.},
    volume={11},
    number={5},
     pages={565\ndash 580},
}

\bib{leonov:lowerbd2}{article}{
  author={Leonov, Yurij G.},
  title={On a lower bound for the growth of a 3-generator 2-group},
  language={Russian, with Russian summary},
  journal={Mat. Sb.},
  volume={192},
  date={2001},
  number={11},
  pages={77--92},
  issn={0368-8666},
  translation={ journal={Sb. Math.}, volume={192}, date={2001}, number={11-12}, pages={1661--1676}, issn={1064-5616}, },
  review={\MR {1886371 (2003a:20050)}},
  doi={10.1070/SM2001v192n11ABEH000610},
}

\bib{lysionok:pres}{article}{
  author={Lysionok, Igor~G.},
  title={A system of defining relations for the Grigorchuk group},
  date={1985},
  journal={Mat. Zametki},
  volume={38},
  pages={503\ndash 511},
}

\bib{mann:growth}{book}{
  author={Mann, Avinoam},
  title={How groups grow},
  date={2012},
  pages={200},
  publisher={London Math. Soc. Lecture Notes}
}

\bib{milnor:5603}{article}{
  author={Milnor, John~W.},
  title={Problem 5603},
  date={1968},
  journal={Amer. Math. Monthly},
  volume={75},
  pages={685\ndash 686},
}

\bib{nekrashevych:ssg}{book}{
  author={Nekrashevych, Volodymyr~V.},
  title={Self-similar groups},
  publisher={Amer. Math. Soc., Providence, RI},
  year={2005},
  volume={117},
  series={Mathematical Surveys and Monographs},
}

\bib{MR2001g:20076}{book}{
    author={Okni{\'n}ski, Jan},
     title={Semigroups of matrices},
 publisher={World Scientific Publishing Co. Inc.},
   address={River Edge, NJ},
      date={1998},
      ISBN={981-02-3445-7},
    review={\MRhref{2001g:20076}},
}
\bib{shneerson:interm}{article}{
    author={Shneerson, Lev~M.},
     title={Relatively free semigroups of intermediate growth},
      date={2001},
      ISSN={0021-8693},
   journal={J. Algebra},
    volume={235},
    number={2},
     pages={484\ndash 546},
    review={\MRhref{2002a:20068}},
}

\bib{svarts:growth}{article}{
  author={{\v S}varc, Albert~S.},
  title={A volume invariant of coverings},
  language={Russian},
  date={1955},
  journal={Dokl. Akad. Nauk SSSR},
  number={105},
  pages={32\ndash 34},
}
\end{biblist}
\end{bibdiv}

\end{document}